\documentclass[reqno]{amsart}

\usepackage{amsmath}
\usepackage{amsthm}
\usepackage{amsfonts}
\usepackage{amssymb}
\usepackage{url}
\usepackage{enumerate}
\usepackage[pdftex,bookmarks=true]{hyperref}

\usepackage{xcolor}

\newcommand\R{{\mathbb{R}}}

\newcommand\Z{{\mathbf{Z}}}

\renewcommand\P{{\mathbf{P}}}

\newcommand\eps{{\varepsilon}}

\newcommand\dist{\operatorname{dist}}


%


\newcommand\CB{{\mathcal B}}

\newcommand\CE{{\mathcal E}}
\newcommand\CF{{\mathcal F}}
\newcommand\CG{{\mathcal G}}
\newcommand\CH{{\mathcal H}}

\newcommand\CN{{\mathcal N}}






\newcommand\ep{{\xi}}


\newcommand\LCD{\mathbf{LCD}}

\newcommand\Comp{\mathbf{Comp}}
\newcommand\Incomp{\mathbf{Incomp}}
\newcommand\supp{\mathbf{supp}}
\newcommand\spread{\mathbf{spread}}

\newcommand\LCDhat{\widehat{\mathbf{LCD}}}



\parindent = 0 pt
\parskip = 12 pt

\textwidth=6.5in
\oddsidemargin=0in
\evensidemargin=0in

\theoremstyle{plain}
 \newtheorem{theorem}{Theorem}[section]
 \newtheorem{conjecture}[theorem]{Conjecture}
 \newtheorem{question}[theorem]{Question}

 \newtheorem{heuristic}[theorem]{Heuristic}
 
 \newtheorem{fact}[theorem]{Fact}
 \newtheorem{lemma}[theorem]{Lemma}
 \newtheorem{corollary}[theorem]{Corollary}

\newtheorem{remark}[theorem]{Remark}

\theoremstyle{definition}
\newtheorem{definition}[theorem]{Definition}

\include{psfig}

\begin{document}

\title[Eigenvalues gap]{Random matrices: Tail bounds for gaps between eigenvalues}

\author{Hoi Nguyen}
\address{Department of Mathematics, The Ohio State University, Columbus OH 43210}
\email{nguyen.1261@math.osu.edu}
\thanks{H. Nguyen is supported by NSF grant DMS-1358648.}

\author{Terence Tao}
\address{Department of Mathematics, UCLA, Los Angeles CA 90095}
\email{tao@math.ucla.edu}
\thanks{T. Tao is supported by a Simons Investigator grant, the
James and Carol Collins Chair, the Mathematical Analysis \& Application Research Fund Endowment, and by NSF grant DMS-1266164.}

\author{Van Vu}
\address{Department of Mathematics, Yale University, New Haven CT 06520}
\email{van.vu@yale.edu}
\thanks{V. Vu is supported by   NSF  grant DMS-1307797  and AFORS grant FA9550-12-1-0083.}

\begin{abstract}   Gaps (or spacings)  between consecutive eigenvalues are a central topic in random matrix theory. 
The goal of this paper is to study the  tail distribution of these gaps  in various random matrix  models. 
We give the first repulsion bound for random matrices with discrete entries and the first  super-polynomial bound on the probability that a random graph 
has simple spectrum, along with several applications. 
\end{abstract}

\maketitle

\setcounter{tocdepth}{2}

\section{Introduction}\label{section:intro}
Let $M_n$ be a  random Hermitian matrix of size $n$, with eigenvalues $\lambda_1 \le \dots \le \lambda_n$. In this paper, we study the  tail distribution of the gap  (spacing) 
$\delta_i:= \lambda_{i+1} -\lambda_i$, and more generally  the tail distribution of $\lambda_{i+l} -\lambda_i$, where $l$ is fixed and $ 1 \le i \le n-l$.  We are going to consider the following classes of random matrix ensembles.

{\it Wigner matrices.} A Wigner matrix $X_n$ is a random Hermitian matrices whose strictly upper triangular entries are iid sub-gaussian random variables with mean 0 and variance 1 and whose diagonal entries are independent sub-gaussian random variables with mean 0 and  variances bounded by $n^{1-o(1)}$, with the diagonal entries independent of the strictly upper triangular entries.  
Here and in the sequel we say that $\xi$ is a sub-gaussian random variable with sub-gaussian moment $m_0>0$ if one has $\P(|\xi|\ge t) \le m_0^{-1}\exp(-m_0t^2)$ for all $t>0$.


{\it Adjacency matrix of random graphs.} 
Let $G(n,p)$ be the Erd\H{o}s-R\'enyi graph on $n$ vertices with  edge density $p$. We denote by $A_n(p)$ the (zero-one) adjacency matrix of $G(n,p)$.

\vskip2mm

{\it Random matrix with arbitrary mean.} We consider 
a random Hermitian matrix $M_n$ of the form  $M_n:=F_n + X_n $, where 
 $F=F_n$ is a deterministic symmetric matrix of size $n$ and of norm $\|F_n\|_2=n^{O(1)}$, and  $X_n$ is a random Hermitian matrix where the off-diagonal 
 entries $\xi_{ij}, 1\le i<j\le n$ are iid copies of a random real variable $\xi$ of zero mean, unit variance, and finite $(2+ \eps_0)$-th moment with  fixed $\eps_0>0$.  
 The diagonal entries are independent random variables with mean zero and variance bounded by $O(1)$. 

 \vskip2mm

Here and later all asymptotic notations are used under the assumption that $n \rightarrow \infty$.

\vskip2mm

Gaps between consecutive eigenvalues have a central place in the theory of random matrices.  The limiting (global)  gap distribution for gaussian matrices (GUE and GOE) has been known for some time \cite{Mehbook}.   Recent progresses on the universality conjecture 
showed that these limiting distributions are universal with the class of Wigner matrices; see \cite{TVmehta, EYsurvey, TVsurvey1}  for surveys.  However, at the microscopic level, there are many open problems concerning basic questions. In the following discussion, when we say limiting distribution, we always mean after a proper normalization. 


The first natural question  is the limiting distribution of a given gap $\delta_i := \lambda_{i+1} -\lambda_i$.  For GUE, this was computed very recently by the second author \cite{Taogap}. Within the class of Wigner matrices, the  four moment theorem from \cite[Theorem 15]{TVuniversality} asserts that this distribution is universal, provided the four matching moment condition.  The matching moment condition was recently removed by Erd\H{o}s and Yau \cite{EY} using sophisticated techniques from the theory of parabolic PDE to analyze a Dyson Brownian motion, allowing for a computation of the gap distribution for random matrix ensembles such as the GOE or Bernoulli ensembles.

Another issue is to understand the size of the \emph{minimum} gap $\delta_{\operatorname{min}} := \min_{1 \le i \le n-1} (\delta_{i+1}-\delta_i)$. For the GUE ensemble, Bourgade and Ben-Arous  \cite{BB} showed  that the minimum gap $\delta_{\operatorname{min}}$  is of order $n^{-5/6} $ and computed the limiting distribution. 
To our best knowledge we are not aware of 
a polynomial lower bound  (of any fixed exponent) for $\delta_{min}$ for discrete random matrices, which are of  importance  
in applications in random graph theory and theoretical computer science. Even proving that $\delta_{\operatorname{min}} > 0$ (in other words the random matrix has simple spectrum) with high probability in the discrete case 
is already  a highly non-trivial problem,  first raised by Babai in the 1980s (motivated by his study of the isomorphism problem \cite{Babai1}). 
This latter problem was solved only very recently by the last two authors \cite{TVsimple}.

Our main goal is to provide lower tail bounds for the gaps $\delta_i$, and hence on the minimum gap $\delta_{\operatorname{min}}$. For the model $X_n$, by Wigner's  semi-circle law \cite{Mehbook},
most eigenvalues are in the interval $[-2 \sqrt n, 2 \sqrt n ]$, thus the average gap is of order $n^{-1/2} $. The question  is to estimate the 
 probability that a particular gap is  significantly smaller than the average:
 
 \vskip2mm 
 
 \begin{question} \label{main question} Estimate  $\P( \delta_i \le n^{-1/2} \delta )$, where $\delta$ may tend to zero with $n$.  \end{question} 
  
 \vskip2mm  As well known,  tail bounds (or deviation) are  essential
tools  in probability, and we  believe that good bounds for the probability in question will have a wide range of applications.

 Let us first discuss a few results related to this question. The last two authors showed \cite{TVuniversality} that for every constant $c_0>0$ there exists $c_1>0$ such that for Wigner matrices and for fixed $\eps>0$ one has

$$ \sup_{\eps n \leq i \leq (1-\eps) n} \P( \delta_i  \le n^{-c_0-\frac{1}{2}} ) \ll n^{-c_1}.$$
  The restriction to the bulk region $\eps n \leq i \leq (1-\eps) n$ was removed in \cite{TVedge}, and the mean zero requirement was partially removed in \cite{sean}.  
  The weakness of this theorem is that $c_1$ is small (much smaller than $1$, regardless the value of $c_0$), and thus one cannot use the union bound to conclude that $\delta _i >0$ simultaneously for all $i$.

In  \cite{ekyy}, Erd\H{o}s et. al. proved  for real Wigner matrices 
\begin{equation}\label{eck}
 \frac{1}{n} \sum_{\eps n \leq i \leq (1-\eps) n} \P(  \delta_i  \leq \delta n^{-1/2} ) \ll \delta^2,
\end{equation}
for any constant  $\eps,\delta>0$, with a similar result also available at the edge of the spectrum. The 
 quadratic decay $\delta^2$ here comes from an eigenvalue repulsion phenomenon, reflecting the first-order decay of the 
 two-point correlation function $\rho_2(x,y)$ of the GOE ensemble as one approaches the diagonal $x=y$.  However, this result only give a bound on the average probability, and furthermore 
 $\delta$ needs to be   a constant.  
  
Under some strong smoothness and decay hypotheses on the entries of a Hermitian Wigner matrix $X_n$, it was shown by Erd\H{o}s, Schlein, and Yau \cite{ESY} that one has the Wegner estimate
\begin{equation}\label{esy-est}
 \P\left( E n^{1/2} - \frac{\eps}{n^{1/2}} \leq \lambda_i \leq \lambda_{i+k-1} \leq E n^{1/2} + \frac{\eps}{n^{1/2}} \hbox{ for some } i \right) \ll \eps^{k^2} 
\end{equation}
for any fixed $k \geq 1$ and any $\eps>0$ and any bounded $E \in \R$ (note that the normalization in \cite{ESY} is different from the one used here).  Setting $k=2$ and applying the union bound (together with a standard bound on the operator norm of $X_n$), we conclude that
$$ \P( \delta_{\operatorname{min}} \leq \delta n^{-1/2} ) \ll n \delta^3 + \exp( - cn )$$
for some absolute constant $c>0$.  This is a strong (and essentially optimal) bound for small $\delta$, but it is only established for very smooth and rapidly decreasing complex probability distributions on the entries; in particular it does not apply at all to discrete ensembles such as the Bernoulli ensemble.  An analogue of \eqref{esy-est} for real smooth ensembles, with the exponent $k^2$ replaced by $k(k+1)/2$, was established in \cite[Appendix B]{BEYY}.
	
Finally, in \cite{TVsimple}, the last two authors established the bound
$$ \P( \delta_i = 0 ) \ll n^{-A} $$
for all $1 \leq i \leq n$ and any fixed $A>0$, for any of the three random matrix models (Wigner, Erd\H{o}s-Renyi, random matrix with arbitrary mean) considered above.  By the union bound (and shifting $A$ by $1$), this also gives
$$ \P( \delta_{\operatorname{min}} = 0 ) \ll n^{-A}.$$

 In this paper, we are going to give  answers to  Question \ref{main question}, and also  to the more general question of bounding 
 $\P( \lambda_{i+l} -\lambda_i  \le n^{-1/2} \delta )$, for a fixed $l$.  As with \cite{TVsimple}, our method is based on probabilistic  inverse Littlewood-Offord theorems (avoiding the use of moment comparison theorems or analysis of Dyson Brownian motion), and works  for any of the three random matrix ensembles introduced above, without requiring any smoothness on the distribution of the entries.  (However,  the two special models of Wigner matrices and adjacency matrices allow a more delicate analysis leading to better bounds.)   
 
 \section{Main results} 

For the sake of applications, we will be mainly focusing  on real symmetric matrices. All results can be extended to the complex case. 

We begin with the Wigner model, where our results are strongest.  Our first main theorem is as follows.

\begin{theorem}[Lower tail bound for a single gap]\label{theorem:main:Wigner} There is a constant $0< c <1$ (depending on the sub-gaussian moments) such that the following holds for the gaps $\delta_i := \lambda_{i+1}(X_n)-\lambda_i(X_n)$ of real symmetric Wigner matrices $X_n$. 
For any quantities $n^{-c} \le  \alpha \le c$  and  $ \delta\ge n^{-c/\alpha}$, we have

$$ \sup_{1 \leq i \leq n-1}  \P (\delta_i \le \delta n^{-\frac{1}{2}})  = O \left(  \frac{\delta}{ \sqrt {\alpha} } \right) .$$
 \end{theorem}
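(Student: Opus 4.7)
The plan is to combine the leave-one-out / Cauchy interlacing strategy with an anti-concentration estimate for a linear form in the exposed column. Expose the last row and column of $X_n$, writing
\[
X_n = \begin{pmatrix} X_{n-1} & v \\ v^\top & a \end{pmatrix},
\]
and diagonalize $X_{n-1} = \sum_j \mu_j u_j u_j^\top$. By Cauchy interlacing $\mu_{i-1} \le \lambda_i \le \mu_i \le \lambda_{i+1} \le \mu_{i+1}$, so the target event $\delta_i \le \delta n^{-1/2}$ forces both $\lambda_i$ and $\lambda_{i+1}$ into a window of length $\delta n^{-1/2}$ around $\mu_i$. Conditionally on $X_{n-1}$, the entries of $v$ and $a$ remain independent sub-gaussian, and the aim is to turn the spectral closeness into an anti-concentration event for the linear form $u_i^\top v$.

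Any eigenvalue $\lambda$ of $X_n$ outside $\{\mu_j\}$ satisfies the secular equation $\lambda - a = \sum_j \gamma_j/(\lambda-\mu_j)$ with $\gamma_j := (u_j^\top v)^2$. Writing this identity at $\lambda_i$ and $\lambda_{i+1}$, subtracting, and dividing by $\delta_i > 0$ yields
\[
1 = \frac{\gamma_i}{(\mu_i-\lambda_i)(\lambda_{i+1}-\mu_i)} - \sum_{j \neq i}\frac{\gamma_j}{(\lambda_i-\mu_j)(\lambda_{i+1}-\mu_j)}.
\]
For $j \neq i$ both factors in the denominator share a sign, so the second sum is non-negative, and AM-GM on the first denominator produces
\[
\gamma_i \le \tfrac{\delta_i^2}{4}(1+S), \qquad S := \sum_{j \neq i}\frac{\gamma_j}{(\lambda_i-\mu_j)(\lambda_{i+1}-\mu_j)}.
\]
Thus $\delta_i \le \delta n^{-1/2}$ translates into $(u_i^\top v)^2$ being small, and the problem reduces to an anti-concentration statement for the linear form $u_i^\top v$.

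The parameter $\alpha$ calibrates a ``good event'' $\CE_\alpha$ on $X_{n-1}$ on which simultaneously (a) $\mu_i$ has no other $\mu_j$ within $\sim \sqrt\alpha\, n^{-1/2}$, giving $S \lesssim n/\alpha$ via the Stieltjes-transform heuristic for the semicircle density; and (b) the eigenvector $u_i$ has large least common denominator at scale $\sqrt{n/\alpha}$, so that an inverse Littlewood--Offord small-ball estimate of the type developed in \cite{TVsimple} yields
\[
\P\bigl(|u_i^\top v| \le \eta \bigm| X_{n-1}\bigr) \ll \eta \qquad \text{for } \eta \ge n^{-c/\alpha}.
\]
On $\CE_\alpha$ the preceding display implies $|u_i^\top v| \le O(\delta/\sqrt\alpha)$, and taking $\eta = O(\delta/\sqrt\alpha)$, which lies above $n^{-c/\alpha}$ thanks to the hypothesis $\delta \ge n^{-c/\alpha}$, gives the stated bound $O(\delta/\sqrt\alpha)$.

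The main obstacle is producing input (b) with failure probability small enough to fit into the final estimate. One must show that with high probability the eigenvector $u_i$ of the Wigner minor $X_{n-1}$ has no low-LCD arithmetic structure at scale $\sqrt{n/\alpha}$; this is a substantial strengthening of the simple-spectrum delocalization from \cite{TVsimple} and is exactly what forces the scale-threshold $\delta \ge n^{-c/\alpha}$, below which the inverse Littlewood--Offord estimate can no longer give a non-trivial small-ball bound. Coupling (b) with the level-repulsion input in (a), and in particular extending both to edge indices $i$ where bulk regularity estimates must be replaced by edge-adapted analogues, are the remaining technical issues.
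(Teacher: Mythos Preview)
Your approach via the secular equation introduces a genuine circularity. To bound $S = \sum_{j \neq i} \gamma_j/((\lambda_i-\mu_j)(\lambda_{i+1}-\mu_j))$ by $O(n/\alpha)$ you need, at minimum, that no $\mu_j$ with $j \neq i$ lies within $\sim\sqrt{\alpha}\,n^{-1/2}$ of $\mu_i$ (equivalently of $\lambda_i,\lambda_{i+1}$). But this is a gap lower bound for the minor $X_{n-1}$ at scale $\sqrt{\alpha}\,n^{-1/2}$, essentially the statement you are trying to prove for $X_n$. For the argument to close, the failure probability of your input~(a) would have to be $O(\delta/\sqrt{\alpha})$ for every $\delta \ge n^{-c/\alpha}$, which is again the theorem itself; you would need an induction on $n$ that you do not set up. The ``Stieltjes-transform heuristic'' does not supply this: local semicircle laws control resolvent sums only off the real axis, and eigenvalue rigidity does not prevent two adjacent $\mu_j$ from coming within $\sqrt{\alpha}\,n^{-1/2}$ of each other.

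The paper sidesteps $S$ altogether. Rather than the secular equation, it reads the block eigenvector equation $(X_{n-1}-\lambda_i(X_n))w + bX = 0$ for the $X_n$-eigenvector $(w,b)$, and tests it against the $X_{n-1}$-eigenvector $v$ to get $|b|\,|v^\top X| = |\lambda_i(X_{n-1})-\lambda_i(X_n)|\,|v^\top w| \le \delta n^{-1/2}$ directly, with no remainder sum to control. The scalar $b$ is handled by a delocalization fact (a positive fraction of the coordinates of any $X_n$-eigenvector are $\gg n^{-1/2}$, with exponentially small failure probability), so that randomizing which row and column to expose yields $|v^\top X| = O(\delta)$ at the cost of a constant factor. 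The $\sqrt{\alpha}$ in the final bound then comes entirely from the anti-concentration side: the regularized-LCD estimate for eigenvectors of $X_{n-1}$ gives $\rho_\delta(v) = O(\delta/\sqrt{\alpha})$ for $\delta \ge n^{-c/\alpha}$, and that is where both the threshold on $\delta$ and the $\sqrt{\alpha}$ loss originate. In particular no level-repulsion input~(a) is needed, and no separate bulk/edge analysis arises in this reduction. Your input~(b) is morally the right ingredient, but the small-ball bound it delivers is $\ll \eta/\sqrt{\alpha}$ rather than $\ll \eta$; in your scheme the $\sqrt{\alpha}$ is misattributed to the control of $S$.
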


Notice that there is a trade-off between the quality of the bound and the range of $\delta$. At the one end, taking $\alpha$ to be a small constant, we have 

\begin{corollary} \label{bigend} 
For any constant $C_0 >0$ there is a constant $c_0 >0$ such that  for  real symmetric Wigner matrices $X_n$, we have

$$ \sup_{1 \leq i \leq n-1}  \P (\delta_i \le \delta n^{-\frac{1}{2}})   \le c_0 \delta, $$  
for all $\delta \ge n^{-C_0}$.  Consequently, by the union bound, 

$$\delta_{min}  \ge n^{-3/2 -o(1) } $$ with probability $1-o(1)$. 

\end{corollary}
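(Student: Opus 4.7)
The corollary is a direct consequence of Theorem~\ref{theorem:main:Wigner} by two appropriate choices of parameters, and requires no new probabilistic input.

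For the first conclusion, given the constant $C_0>0$, I would set $\alpha := \min(c, c/C_0)$, where $c$ is the constant from Theorem~\ref{theorem:main:Wigner}. Then $\alpha$ is a positive constant depending only on $C_0$ and on the sub-gaussian moments, so for all sufficiently large $n$ the hypothesis $n^{-c}\le\alpha\le c$ is satisfied. Moreover $c/\alpha\ge C_0$, so any $\delta\ge n^{-C_0}$ automatically satisfies $\delta\ge n^{-c/\alpha}$. Theorem~\ref{theorem:main:Wigner} therefore applies and yields
$$\sup_{1\le i\le n-1}\P(\delta_i\le \delta n^{-1/2}) = O\!\left(\delta/\sqrt{\alpha}\right) \le c_0\,\delta,$$
with $c_0$ depending only on $C_0$ (and the sub-gaussian moments). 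The finitely many small values of $n$ for which the hypothesis $n^{-c}\le\alpha$ is not yet met can be absorbed into $c_0$.

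For the minimum gap statement, I would fix any $\eps>0$ and apply the first part with the choice $C_0 := 1+\eps$ and $\delta := n^{-1-\eps}$, which lies in the permitted range $\delta\ge n^{-C_0}$. A union bound over the $n-1$ consecutive gaps gives
$$\P\!\left(\delta_{\min}\le n^{-3/2-\eps}\right) \le \sum_{i=1}^{n-1}\P\!\left(\delta_i\le \delta n^{-1/2}\right) \le (n-1)\,c_0\,n^{-1-\eps} = O(n^{-\eps}) = o(1).$$
Hence for each fixed $\eps>0$ one has $\delta_{\min}\ge n^{-3/2-\eps}$ with probability $1-o(1)$. A standard diagonalization (pick $\eps=\eps_n\to 0$ decreasing slowly enough that the above failure probability still tends to $0$) upgrades this family of statements to the single uniform conclusion $\delta_{\min}\ge n^{-3/2-o(1)}$ with probability $1-o(1)$.

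There is essentially no obstacle here: the entire content of the corollary is the parameter bookkeeping above, and all of the work sits inside Theorem~\ref{theorem:main:Wigner}. The only point worth verifying is that one is allowed to fix $\alpha$ as a constant (so that the factor $1/\sqrt{\alpha}$ does not degrade the linear-in-$\delta$ bound) while still permitting $\delta$ to range over an arbitrary polynomial scale $\delta\ge n^{-C_0}$. This is exactly the trade-off built into Theorem~\ref{theorem:main:Wigner}: enlarging the prescribed $C_0$ forces $\alpha$ to shrink, but only by a constant factor depending on $C_0$, so $\sqrt{\alpha}$ remains bounded away from $0$ and the resulting constant $c_0$ is harmless for the union bound.
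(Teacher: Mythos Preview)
Your proposal is correct and matches the paper's approach exactly: the paper introduces the corollary with the single phrase ``taking $\alpha$ to be a small constant,'' and the second part is asserted ``by the union bound,'' which is precisely the parameter bookkeeping you have spelled out in detail.
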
 

At the other end, taking $\alpha = n^{-c} $, we have 

\begin{corollary} 
Let $X_n$ be a  real symmetric Wigner matrix  where the upper triangular entries are Bernoulli (taking value $\pm 1$ with probability $1/2$), then 
$X_n$ has multiple eigenvalues with probability at most $O(\exp( - n^c))$ for some constant $c >0$. 
\end{corollary}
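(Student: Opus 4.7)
The plan is to simply apply Theorem~\ref{theorem:main:Wigner} at the lower endpoint $\alpha = n^{-c}$ and take a union bound over the $n-1$ gaps. The key observation is that the event of $X_n$ having a multiple eigenvalue is exactly the event $\{\delta_i = 0 \text{ for some } 1 \le i \le n-1\}$, and since each $\delta_i \ge 0$, we have the containment $\{\delta_i = 0\} \subseteq \{\delta_i \le \delta n^{-1/2}\}$ for every $\delta > 0$. Hence we are free to pick $\delta$ as small as Theorem~\ref{theorem:main:Wigner} permits.

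Concretely, let $c > 0$ denote the constant from Theorem~\ref{theorem:main:Wigner} (which depends only on the sub-gaussian moment of the Bernoulli $\pm 1$ distribution and of the diagonal entries, both of which are absolute constants in the Bernoulli model). Set $\alpha := n^{-c}$, which lies in the allowed range $[n^{-c}, c]$ for all sufficiently large $n$, and set $\delta := n^{-c/\alpha} = n^{-c \, n^{c}}$, which is the smallest value of $\delta$ permitted by the theorem. Then Theorem~\ref{theorem:main:Wigner} yields
\begin{equation*}
\sup_{1 \le i \le n-1} \P(\delta_i = 0) \;\le\; \sup_{1 \le i \le n-1} \P(\delta_i \le \delta n^{-1/2}) \;=\; O\!\left( \frac{\delta}{\sqrt{\alpha}} \right) \;=\; O\!\left( n^{c/2} \cdot n^{-c\, n^{c}} \right).
\end{equation*}

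A union bound over the $n-1$ indices $i$ then gives
\begin{equation*}
\P(X_n \text{ has a repeated eigenvalue}) \;\le\; (n-1)\cdot O\!\left(n^{c/2} n^{-c n^{c}}\right) \;=\; O\!\left(\exp(-c\, n^{c} \log n + O(\log n))\right),
\end{equation*}
which is bounded by $O(\exp(-n^{c'}))$ for any $c' < c$ and all large $n$. Absorbing the small loss from the union bound and the $\sqrt{\alpha}$ factor into the exponent yields the claim with the stated constant $c$ (or any slightly smaller positive constant).

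I do not anticipate any real obstacle here: the substantive work is contained entirely in Theorem~\ref{theorem:main:Wigner}, and the corollary is simply an instantiation at the extreme endpoint $\alpha = n^{-c}$. The only thing to verify is that the Bernoulli $\pm 1$ Wigner model satisfies the hypotheses of that theorem, which is immediate since $\pm 1$ random variables are bounded, mean zero, variance one, and hence sub-gaussian with an absolute sub-gaussian moment.
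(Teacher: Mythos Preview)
Your proposal is correct and matches the paper's approach exactly: the paper simply remarks that the corollary follows from Theorem~\ref{theorem:main:Wigner} by taking $\alpha = n^{-c}$, and you have spelled out precisely that instantiation together with the trivial containment $\{\delta_i = 0\} \subseteq \{\delta_i \le \delta n^{-1/2}\}$ and the union bound over $i$.
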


 This corollary improves  the polynomial bound  in \cite{TVsimple} for this specific case, although the result in \cite{TVsimple} holds in a more general context (the main assumption being that the distribution of any individual entry on the matrix is not concentrated at a single point).  We conjecture that the probability 
 that a Bernoulli matrix has  multiple eigenvalues is in fact $O( \exp(-cn))$ for some constant $c>0$.  

In the next theorem, we treat the gap $\lambda_{i+l} -\lambda_i$, where $l \geq 1$ is fixed.  Let  $d := \lfloor \log_2 l \rfloor$, and set 

$$c_l := \frac{ (3l +3 -2^{d+1} ) 2^d -1}{3}.$$

Thus $c_2=3$ and $c_l \ge \frac{l^2+2l}{3}$.

 
 \begin{theorem}[Repulsion for multiple gaps]\label{theorem:main:Wigner:d} There is a constant $0<c<l$ (depending on the sub-gaussian moments and on $l$) such that the following holds for real symmetric Wigner matrices $X_n$. 
For any quantities $n^{-c} \le   \alpha \le c$  and  $ \delta\ge n^{1 -c/\alpha}$

$$ \sup_{1 \leq i \leq n-l}  \P (|\lambda_{i+l}(X_n)-\lambda_i(X_n)| \le \delta n^{-\frac{1}{2}})  = O \left(  \left(\frac{\delta}{ \sqrt {\alpha} }  \right) ^{c_l }\right) .$$ 
\end{theorem}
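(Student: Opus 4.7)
The plan is to bootstrap the single-gap estimate of Theorem \ref{theorem:main:Wigner} via an iterated spectral-projection / inverse Littlewood--Offord argument in which each iteration accounts for one additional eigenvalue in the small window. The event $|\lambda_{i+l}(X_n)-\lambda_i(X_n)| \le \delta n^{-1/2}$ forces the spectral projection $P_I$ of $X_n$ onto an interval $I$ of length $\delta n^{-1/2}$ to have rank at least $l+1$. Via a Schur-complement / Sherman--Morrison identity applied to the last row and column, this in turn forces the simultaneous smallness of an $(l+1)$-tuple of linear forms in the random entries of that row, conditional on the remaining principal minor.

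First I would expose one row and apply a vector-valued small-ball estimate (the inverse Littlewood--Offord theorem in dimension $l+1$), obtaining a conditional probability of order $(\delta/\sqrt{\alpha})^{l+1}$ rather than the scalar bound $\delta/\sqrt{\alpha}$ of Theorem \ref{theorem:main:Wigner}. By Cauchy interlacing, the resulting $(n-1)\times(n-1)$ minor still contains $l$ eigenvalues in the same window, so one can recurse. Exposing rows one at a time yields only a linear-in-$l$ exponent; the improvement to the quadratic exponent $c_l$ comes from a dyadic schedule. At the $j$-th stage we expose a block of $2^j$ rows simultaneously and apply a joint Littlewood--Offord bound that exploits the remaining $(l+1-2^j)$-dimensional degeneracy of $P_I$. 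Summing the contributions through the last level $j=d=\lfloor \log_2 l \rfloor$, and handling the residual $l-2^d$ eigenvalues separately, should recover the explicit exponent $c_l = ((3l+3-2^{d+1})2^d-1)/3$.

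The hardest step is verifying at each dyadic level that the coefficient vectors of the linear forms to which one applies the inverse Littlewood--Offord theorem have sufficiently large essential LCD. These coefficient vectors live in an eigenspace of the current minor which is itself random and may have delicate arithmetic structure, especially as the exposures accumulate; controlling this---by discarding a small-probability exceptional event at each stage and combining with the eigenvector delocalisation machinery already used in the proof of Theorem \ref{theorem:main:Wigner}---is what forces the slightly weaker range $\delta \ge n^{1-c/\alpha}$ in place of $\delta \ge n^{-c/\alpha}$: a union bound over the $O(d)$ dyadic scales and their associated exceptional events contributes an extra factor of $n^{O(1)}$ that must be absorbed into the final probability estimate.
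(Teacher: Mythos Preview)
Your broad framework is right (Cauchy interlacing, iterated row exposure, multi-dimensional inverse Littlewood--Offord, and LCD/delocalisation to justify the small-ball hypotheses), but the iteration scheme you describe is not the one that produces the exponent $c_l$, and your diagnosis of where the dyadic structure enters is inverted.

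\textbf{Where the dyadic structure actually lives.} The paper exposes \emph{one} row/column at a time, for a total of $l$ steps, descending through the minors $X_n, X_{n-1}, \dots, X_{n-l}$. After $j$ steps the minor $X_{n-j}$ has (by interlacing) $l'=l+1-j$ eigenvalues in the window, with eigenvectors $v_1,\dots,v_{l'}$. Exposing the next row gives a \emph{single} random vector $X$ subject to the \emph{simultaneous} constraints $|v_1^TX|=O(\delta),\dots,|v_{l'}^TX|=O(\delta)$; this is already a multi-dimensional small-ball problem for one random vector. Applying the $2^{d'}$-dimensional version of Theorem~\ref{theorem:RV:d} with $d'=\lfloor\log_2 l'\rfloor$ yields a factor $(\delta/\sqrt\alpha)^{2^{d'}}$ at that step. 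Summing gives
\[
c_l \;=\; \sum_{l'=1}^{l} 2^{\lfloor \log_2 l'\rfloor}
\;=\; \frac{(3l+3-2^{d+1})2^d-1}{3}.
\]
So ``one row at a time'' already produces the quadratic-in-$l$ exponent; it does not give a merely linear exponent as you asserted. The power-of-two truncation $2^{d'}\le l'$ arises from the iterated Cauchy--Schwarz identity $\|Au\|\le\|A^{2^{d'}}u\|^{1/2^{d'}}$ used in Fact~\ref{fact:Wigner:d} and its analogue (equation~\eqref{eqn:Wigner:l:CS}) to show that \emph{every} unit $u$ in the span satisfies $\|(X_{n-1}-\lambda)u\|\le t$, which is what feeds into Lemma~\ref{lemma:Wigner:key} to certify large regularized LCD for the whole subspace.

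\textbf{Why your block-exposure scheme has an obstacle.} If you remove a block of $k>1$ rows and columns at once, the eigenvector of $X_n$ decomposes as $(w,b)$ with $b\in\R^k$, and the analogue of the key identity reads $v_j^T Y b = -(\mu_j-\lambda)v_j^T w$, where $Y$ is the random $(n-k)\times k$ block. Now $b$ is a vector depending on $Y$, and you can no longer simply divide through by a nonzero scalar (as delocalisation lets you do when $k=1$) to isolate a clean linear form $v_j^T X$ in i.i.d.\ entries. Making a block version work would require controlling the direction of $b$ uniformly, which is a substantially harder problem and is not needed for $c_l$.

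\textbf{Two smaller corrections.} First, after one row is removed the minor has $l$ (not $l+1$) eigenvalues in the window, so the first small-ball estimate is $l$-dimensional at best, not $(l+1)$-dimensional. Second, the weaker range $\delta\ge n^{1-c/\alpha}$ does not come from a union bound over dyadic levels; it comes from the specific choice $D=t^{-1}\kappa/\sqrt\alpha$, $\beta=t$ in Theorem~\ref{theorem:Wigner:d:poor}, which must satisfy $1\le D\le n^{c/\alpha}$ and translates (via $t=\delta n^{-1/2}$ and $\kappa=n^{2c}$) into the stated lower bound on $\delta$.
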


 Similar to Corollary \ref{bigend}, we have 
 
 \begin{corollary} \label{bigend-d} 
For any constant $l, C_0 >0$ there is a constant $c_0 >0$ such that  for real Wigner matrices $X_n$

$$ \sup_{1 \leq i \leq n-l}  \P (|\lambda_{i+l}(X_n)-\lambda_i(X_n)| \le \delta n^{-\frac{1}{2}})   \le c_0 \delta^{c_l}, $$  for all $\delta \ge n^{-C_0}$.  
\end{corollary}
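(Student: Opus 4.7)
The plan is to deduce Corollary \ref{bigend-d} as an essentially immediate specialization of Theorem \ref{theorem:main:Wigner:d}, following the same template by which Corollary \ref{bigend} was extracted from Theorem \ref{theorem:main:Wigner}. Concretely, given the fixed constants $l$ and $C_0>0$, I would freeze $\alpha$ at a small positive constant (depending only on $l$ and $C_0$) rather than letting it scale with $n$. This puts the bound in the "large $\delta$" regime of the theorem and turns the theorem's $\alpha^{-c_l/2}$ prefactor into an absolute constant.

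First I would choose $\alpha := \min(c, c/(C_0+1))$, where $c$ is the constant produced by Theorem \ref{theorem:main:Wigner:d}. This choice guarantees two things: (i) once $n$ is large enough we automatically have $n^{-c}\le \alpha \le c$, so the first constraint of the theorem is met; and (ii) $1-c/\alpha \le -C_0$, so that the theorem's hypothesis $\delta \ge n^{1-c/\alpha}$ is implied by the hypothesis $\delta \ge n^{-C_0}$ of the corollary. Applying Theorem \ref{theorem:main:Wigner:d} with this constant $\alpha$ yields
\begin{equation*}
\sup_{1\le i\le n-l} \P\bigl(|\lambda_{i+l}(X_n)-\lambda_i(X_n)|\le \delta n^{-1/2}\bigr)
= O\!\left(\left(\frac{\delta}{\sqrt{\alpha}}\right)^{c_l}\right)
= O\bigl(\alpha^{-c_l/2}\bigr)\,\delta^{c_l},
\end{equation*}
and since $\alpha$ depends only on $l$ and $C_0$, the implied constant $c_0 := O(\alpha^{-c_l/2})$ depends only on $l$ and $C_0$, as required.

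Finally, I would dispose of the finitely many "small $n$" for which $\alpha < n^{-c}$ (where the theorem does not directly apply) by noting that the probability is at most $1$, which is bounded by $c_0 \delta^{c_l}$ once $c_0$ is enlarged to absorb these finitely many cases — or equivalently by restricting attention to $\delta \le 1$, since the bound is trivial otherwise. There is no real obstacle here: the only thing to verify is the arithmetic constraint $\alpha \le c/(C_0+1)$, and the rest is bookkeeping. The nontrivial content is entirely contained in Theorem \ref{theorem:main:Wigner:d} itself.
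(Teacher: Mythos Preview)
Your proposal is correct and follows exactly the approach the paper intends: the paper simply writes ``Similar to Corollary \ref{bigend}, we have'' before stating the corollary, and Corollary \ref{bigend} was obtained from Theorem \ref{theorem:main:Wigner} by taking $\alpha$ to be a small constant. Your explicit choice $\alpha=\min(c,\,c/(C_0+1))$ and the bookkeeping for small $n$ and $\delta>1$ fill in precisely the details the paper omits.
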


 The key feature 
 of this result is that 
 the bound  $\delta^{c_l} $  yields evidence for the  repulsion between nearby eigenvalues. For  $l=2$, we have $c_l=3$, yielding the bound  $O( \delta^3)$. If  there was no repulsion, 
 and the eigenvalues behave like point in a Poisson process, then the 
 bound would be  $O(\delta^2)$ instead.   The bound $c_l \ge \frac{l^2 +2l} {3} $ seems to be sharp, up to a constant factor; compare with \eqref{esy-est}.

We next consider the general model $M_n:= F_n + X_n$.
 It is often useful to   view $M_n$ as  a random perturbation of the  deterministic  matrix $F_n$, especially with respect to 
  applications in data science and numerical analysis, where matrices (as data or  inputs to algorithms) are often perturbed by  random noise.  One  can consult for instance \cite{TVcomp} where this viewpoint is discussed with respect to the least singular value problem.  As an illustration for this view point, we are going to present an application in  numerical analysis. 

\begin{theorem}\label{theorem:main:perturbation} Assume $\|F_n\|_2 \le n^{\gamma}$ for some  constant  $\gamma>0$. 
 Then for any fixed $A>0$, there exists $B>0$ depending on $\gamma,A$ such that 
$$ \sup_{1 \leq i \leq n-1}  \P(\delta_i \le n^{-B}) = O(n^{-A}),$$
where $\delta_i := \lambda_{i+1}(M_n)-\lambda_i(M_n)$ is the $i^{\operatorname{th}}$ gap of a random matrix $M_n = F_n +X_n$ with arbitrary mean.
In particular, the matrix $M_n$ has simple spectrum with probability $1-O(n^{-A})$ for any fixed $A>0$.
\end{theorem}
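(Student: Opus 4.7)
The strategy is the probabilistic inverse Littlewood--Offord approach from \cite{TVsimple}, quantified to yield a polynomial lower bound on $\delta_i$ rather than simply $\delta_i>0$. After a union bound over $i$ it suffices, for each fixed $i$, to show
\[
\P(\delta_i\le n^{-B}) \;=\; O(n^{-A-1})
\]
for $B=B(A,\gamma)$ large enough. I would expose the last row/column and write
\[
M_n=\begin{pmatrix} M_{n-1} & X \\ X^T & a \end{pmatrix},
\]
with $X\in\R^{n-1}$ and $a$ independent of the $(n-1)\times(n-1)$ principal minor $M_{n-1}$. Letting $\mu_j$ and $w_j$ denote the eigenvalues and orthonormal eigenvectors of $M_{n-1}$, Cauchy interlacing places $\mu_i$ between $\lambda_i(M_n)$ and $\lambda_{i+1}(M_n)$, and both of the latter are roots of the secular function
\[
f(\lambda)\;=\;a-\lambda-\sum_{j=1}^{n-1}\frac{|\langle w_j,X\rangle|^2}{\mu_j-\lambda}.
\]
A short calculus analysis of $f$ near its pole $\mu_i$ shows that, on a high-probability event where $\|M_n\|$ and the regular part of $f$ at $\mu_i$ are polynomially bounded (via standard operator-norm estimates for this model), the bound $\delta_i\le n^{-B}$ forces $|\langle w_i,X\rangle|\le n^{-B'}$ for some $B'$ linear in $B$.

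The second step conditions on $M_{n-1}$. On a further high-probability event, the relevant eigenvector $w_i$ is sufficiently spread, in the sense that its least common denominator in the Rudelson--Vershynin sense is at least $n^{B''}$ for any prescribed $B''>0$. The probabilistic inverse Littlewood--Offord theorem as formulated by the authors in \cite{TVsimple} then delivers
\[
\sup_{t\in\R}\P\bigl(|\langle w_i,X\rangle-t|\le n^{-B'}\bigm|M_{n-1}\bigr)\;=\;O(n^{-A-2}),
\]
and combining this with the reduction above, together with a union bound over the exceptional conditioning events, proves the claim.

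\textbf{Main obstacle.} The technical heart is the quantitative delocalization / large-LCD theorem for eigenvectors of $M_{n-1}=F_{n-1}+X'_{n-1}$, valid uniformly over arbitrary symmetric shifts with $\|F_{n-1}\|_2\le n^\gamma$ and under only a $(2+\eps_0)$-th moment hypothesis on the entries of $X'_{n-1}$. Existing structural theorems of this kind are typically formulated for mean-zero sub-Gaussian Wigner matrices, so one must extend them to the perturbed, low-regularity setting --- ideally via a truncation and net argument in which $F_{n-1}$ enters only through its norm bound. Once such a spread statement for $w_i$ is in place, the rest of the proof is a quantitative version of the argument already carried out in \cite{TVsimple}.
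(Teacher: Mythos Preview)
Your overall plan---expose a row/column, reduce to a small-ball estimate for $\langle w_i, X\rangle$, then show the eigenvector $w_i$ of the minor has no additive structure via inverse Littlewood--Offord---is exactly the paper's strategy, and your identification of the main obstacle is accurate. Two points where the paper's execution differs from your sketch are worth flagging.

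For the reduction, your claim that the regular part of $f$ at $\mu_i$ is polynomially bounded ``via standard operator-norm estimates'' does not hold as stated: that regular part is $a-\lambda-\sum_{j\ne i}c_j^2/(\mu_j-\lambda)$, and controlling it near $\mu_i$ requires either a lower bound on $\min_{j\ne i}|\mu_j-\mu_i|$ (a gap of $M_{n-1}$, which is circular) or on the exposed coordinate $b$ of the $M_n$-eigenvector---neither follows from operator-norm bounds. The paper sidesteps this by writing the $\lambda_i(M_n)$-eigenvector as $(w,b)^T$ and taking the inner product of $(M_{n-1}-\lambda_i(M_n))w+bX=0$ with $w_i$ to get $|b|\,|\langle w_i,X\rangle|\le\delta_i$ directly; since some coordinate of a unit vector is at least $n^{-1/2}$, a union bound over which row to expose costs only a factor $n$, harmless for $O(n^{-A})$ bounds. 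Your proposal always exposes the last row, leaving $b$ uncontrolled. For the structure theorem on $w_i$, the paper explicitly remarks that the LCD route it uses for sub-gaussian Wigner matrices ``does not seem to work'' in the perturbed $(2+\eps_0)$-moment setting, citing the blown-up norm $n^\gamma$ and the weak moment hypothesis. It uses instead the GAP-based continuous inverse Littlewood--Offord theorem of Nguyen--Vu: a pigeonhole over scales produces $\delta$ with $\rho_{n^{\gamma'}\delta,\alpha}(v)\le n^{0.49}\rho_{\delta,\alpha}(v)$; most coordinates of $v$ are then covered by a short union of GAPs, giving a net for $(\tilde v,w',\tilde\lambda)$ of size $n^{-n/2+o(n)}q^{-n}$, which is beaten by a tensorized small-ball bound on the off-diagonal block $D^*w'$. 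The shift $F_n$ does enter only through its norm bound, as you anticipated, but the mechanism is GAP approximation rather than an LCD lower bound.
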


This theorem shows that  eigenvalue separation  holds  regardless the means of the entries. 
The dependence of $B$ in terms of $A$ and $\gamma$ in Theorem \ref{theorem:main:perturbation}  can be made explicit, for instance one can (rather generously) assume $B > (5A+6)\max \{1/2, \gamma\} +  5$.  

Finally, let us focus on the adjacency matrix of the random graph $G(n,p)$, where $0 < p< 1$ is a constant. 
Set  $F_n := p (J_n-I_n)$, where $J_n$ is the all-one matrix and $I_n$ is the identity. 
 In this case, we can sharpen the bound of Theorem \ref{theorem:main:perturbation} to almost match with the better bound in  Theorem \ref{theorem:main:Wigner}.

\begin{theorem}\label{theorem:main:ER}  Let $0 < p < 1$ be independent of $n$, and let $A_n$ be the adjacency matrix of the random graph $G(n,p)$.  Let $\delta_i := \lambda_{i+1}(A_n) - \lambda_i(A_n) $ denote the eigenvalue gaps.
Then for any fixed $A>0$, and any $\delta > n^{-A}$, we have
$$ \sup_{1 \leq i \leq n-1} \P( \delta_i \leq \delta n^{-\frac{1}{2}} ) =O(n^{o(1)} \delta).$$
\end{theorem}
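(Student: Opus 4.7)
The plan is to reduce Theorem \ref{theorem:main:ER} to the multi-gap Wigner bound of Corollary \ref{bigend-d} via a Cauchy interlacing argument, exploiting the fact that the mean $\E[A_n] = p(J_n - I_n)$ is a rank-one matrix plus a scalar multiple of the identity. This is a direct (perhaps indirect, but clean) route that even slightly overshoots the stated bound, giving an $O(\delta^3)$ estimate.

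First I would decompose $A_n = (Y_n - pI_n) + pJ_n$, where $Y_n := A_n - \E[A_n]$ has iid centered Bernoulli off-diagonal entries of mean $0$ and variance $p(1-p)$, and zero diagonal. For fixed $p \in (0,1)$ the entries of $Y_n/\sqrt{p(1-p)}$ are iid, mean zero, variance one, and uniformly sub-Gaussian (Bernoulli variables being bounded), and the deterministically zero diagonal trivially satisfies the variance bound $n^{1-o(1)}$ and the required independence from the upper-triangular part. Thus $Y_n/\sqrt{p(1-p)}$ is a real symmetric Wigner matrix in the sense used throughout the paper.

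Since $pJ_n$ is rank-one positive semidefinite, the Cauchy interlacing inequality for rank-one PSD perturbations, applied to $A_n = (Y_n - pI_n) + pJ_n$, gives $\lambda_i(Y_n) - p \le \lambda_i(A_n) \le \lambda_{i+1}(Y_n) - p$ for $1 \le i \le n-1$, and hence
\begin{equation*}
\delta_i(A_n) \;\le\; \lambda_{i+2}(Y_n) - \lambda_i(Y_n) \qquad (1 \le i \le n-2).
\end{equation*}
Applying Corollary \ref{bigend-d} with $l = 2$ (so $c_l = 3$) and $C_0 := A$ to the Wigner matrix $Y_n/\sqrt{p(1-p)}$ yields, for a constant $c_0 = c_0(A,p)$ and all $\delta \ge n^{-A}$,
\begin{equation*}
\sup_{1 \le i \le n-2} \P\bigl(\lambda_{i+2}(Y_n) - \lambda_i(Y_n) \le \delta\, n^{-1/2}\bigr) \;\le\; c_0\, \delta^3,
\end{equation*}
which via the previous display gives $\sup_{1 \le i \le n-2} \P(\delta_i(A_n) \le \delta n^{-1/2}) = O(\delta^3) = O(n^{o(1)} \delta)$, in particular the bound claimed in the theorem (and strictly stronger). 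The only remaining case is $i = n-1$: the Perron-Frobenius lower bound $\lambda_n(A_n) \ge \mathbf{1}^\top A_n \mathbf{1}/n = 2|E(G(n,p))|/n$ combined with concentration of the edge count gives $\lambda_n(A_n) \ge p(n-1)/2$ with probability $1 - \exp(-\Omega(n))$, while $\lambda_{n-1}(A_n) \le \lambda_n(Y_n) - p \le \|Y_n\| = O(\sqrt{n})$ w.h.p., so $\delta_{n-1}(A_n) \ge pn/4$ with overwhelming probability, trivially exceeding $\delta n^{-1/2}$ for $\delta \ge n^{-A}$.

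The main (and essentially only) obstacle is the verification that $Y_n/\sqrt{p(1-p)}$ really satisfies the hypotheses of the Wigner class invoked by Corollary \ref{bigend-d}, which as noted above is immediate for fixed $p$; all the delicate inverse Littlewood--Offord analysis has already been absorbed into that corollary. Modulo this sanity check, the proof reduces to a one-line interlacing inequality plus an invocation of the previously established multi-gap repulsion bound and a routine Perron estimate for the top gap.
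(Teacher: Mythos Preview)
Your interlacing inequality is correct, but it points the wrong way for the purpose at hand.  From
\[
\delta_i(A_n) \;\le\; \lambda_{i+2}(Y_n) - \lambda_i(Y_n)
\]
one obtains the inclusion $\{\lambda_{i+2}(Y_n)-\lambda_i(Y_n)\le t\}\subset\{\delta_i(A_n)\le t\}$, and hence only the \emph{lower} bound
\[
\P\bigl(\delta_i(A_n)\le \delta n^{-1/2}\bigr)\;\ge\;\P\bigl(\lambda_{i+2}(Y_n)-\lambda_i(Y_n)\le \delta n^{-1/2}\bigr).
\]
So the Wigner repulsion estimate from Corollary~\ref{bigend-d} gives you nothing here: you would need a lower bound on $\delta_i(A_n)$ in terms of a Wigner gap, and interlacing alone cannot provide one.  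Indeed, under a rank-one perturbation two eigenvalues of $A_n$ can coalesce even when all the eigenvalues of $Y_n$ are well separated (nothing in the interlacing constraints prevents $\lambda_i(A_n)$ and $\lambda_{i+1}(A_n)$ from both sitting near the single value $\lambda_{i+1}(Y_n)-p$).  There is no quick fix: the rank-one piece $pJ_n$ genuinely moves eigenvalues by order $\sqrt n$ in the bulk, so one cannot treat it as a negligible perturbation of the Wigner part.

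The paper's proof therefore does not try to reduce to the Wigner case.  Instead it reruns the eigenvector anti-concentration machinery directly for $A_n$: one proves an Erd\H{o}s--R\'enyi analogue of Theorem~\ref{theorem:Wigner:poor} (namely Theorem~\ref{theorem:ER:poor}), using the inverse Littlewood--Offord scheme of Section~\ref{section:perturbation} but sharpened so that the exponent loss at each scale is only $n^{O(\sqrt\alpha)}$ rather than $n^{0.49}$; the mean is handled by centering the off-diagonal blocks (replacing $D$ by $\tilde D = D - p\,\mathbf 1\mathbf 1^T$) and absorbing the resulting rank-one term into the free vector $u$ (cf.\ \eqref{ER:bojo}, \eqref{ER:dvin}).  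Once eigenvectors of $A_{n-1}$ are shown to be ``poor'', the reduction via \eqref{badevent1} and Theorem~\ref{theorem:delocalization} (which is stated for $A_n(p)$ as well) goes through exactly as in Section~\ref{subsection:strategy}.
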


All of our  results extend to the Hermitian case. In this case the upper triangular entries are complex variables 
 whose   real and complex components are iid copies of a sub-gaussian random variable of mean zero and variance 1/2. The value of  $c_l$ in Theorem \ref{theorem:main:Wigner:d} doubles;  
 see  Remark \ref{remark:Wigner:1:complex} for a discussion.

Our approach also works, with  minor modifications,  for random matrices  where the variance of the entries  decays  to zero with $n$.
 In particular, we can have them as small as  $n^{-1+c}$ for any fixed $c > 0$. 
 This case contains in particular the adjacency matrix of sparse random graphs.  Details will appear elsewhere.


\section{Applications}

\subsection{Random graphs have simple spectrum}

 Babai conjectured  that $G(n,1/2)$ has a simple spectrum, with probability $1-o(1)$. 
This conjecture was recently settled in \cite{TVsimple}.  Using the new deviation bounds, we can have the following 
stronger statement.

\begin{theorem}  With probability $1-o(1)$, the gap between any two eigenvalues of 
$G(n,1/2)$ is at least $n^{-3/2 +o(1) } $.
\end{theorem}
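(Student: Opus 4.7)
The plan is to deduce this theorem directly from Theorem~\ref{theorem:main:ER}. Writing the eigenvalues in increasing order $\lambda_1 \le \cdots \le \lambda_n$, the absolute value of any gap $|\lambda_j - \lambda_i|$ with $i \neq j$ is bounded below by the minimum consecutive gap $\delta_{\operatorname{min}} := \min_{1 \le i \le n-1}\delta_i$, so it suffices to prove that $\delta_{\operatorname{min}} \ge n^{-3/2+o(1)}$ with probability $1-o(1)$.

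First, I fix an arbitrarily small $\eta > 0$ and set $\delta := n^{-1-\eta}$, noting that $\delta > n^{-A}$ holds with $A := 2$. Applying Theorem~\ref{theorem:main:ER} at $p=1/2$ with this $\delta$ gives, uniformly over $1 \le i \le n-1$,
$$\P\bigl(\delta_i \le n^{-3/2-\eta}\bigr) \;=\; O\bigl(n^{o(1)} \delta\bigr) \;=\; O\bigl(n^{-1-\eta/2}\bigr),$$
where in the second step I absorb the $n^{o(1)}$ prefactor into a loss of $n^{\eta/2}$, which is valid for $n$ large.

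Next, I run a union bound over the $n-1$ consecutive indices:
$$\P\bigl(\delta_{\operatorname{min}} \le n^{-3/2-\eta}\bigr) \;\le\; (n-1)\cdot O\bigl(n^{-1-\eta/2}\bigr) \;=\; O\bigl(n^{-\eta/2}\bigr) \;=\; o(1).$$
Since $\eta > 0$ was arbitrary, a routine diagonalization (taking $\eta = \eta_n \to 0$ slowly enough that $n^{\eta_n/2}$ still dominates the $n^{o(1)}$ factor in Theorem~\ref{theorem:main:ER}) upgrades this into the single statement $\delta_{\operatorname{min}} \ge n^{-3/2+o(1)}$ with probability $1-o(1)$, as required.

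I do not expect any real obstacle in this deduction: all the substantive probabilistic content --- the eigenvalue repulsion estimate obtained via the inverse Littlewood--Offord theory --- is already packaged inside Theorem~\ref{theorem:main:ER}, and the present theorem is essentially a union-bound corollary. The only mild subtlety is bookkeeping the $n^{o(1)}$ factor when converting the parameterized tail bound into a qualitative almost-sure lower bound.
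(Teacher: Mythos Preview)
Your proof is correct and follows exactly the approach sketched in the paper: apply Theorem~\ref{theorem:main:ER} with $\delta = n^{-1-o(1)}$ and take a union bound over the $n-1$ consecutive gaps. Your write-up is more careful about the bookkeeping (the $n^{o(1)}$ factor and the diagonalization), but the substance is identical.
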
 

To prove this theorem, apply  Theorem \ref{theorem:main:ER} with $\delta = n^{-1 -o(1) }$ and then apply the union bound.

\subsection{Nodal domains of random graphs}

Consider a random graph $G(n,p)$ (with $p$ constant) and the accompanying matrix $A_n (p)$. 
Let $u$ be an eigenvector of $A_n (p)$. A {\it strong}  nodal domain (with respect to $u$) 
 is a maximal connected component   $S$ of the graph, where for any $i, j \in S$, $u_i u_j >0$
 (here $u_i$ is the coordinate of $u$ corresponding to $i$).  A {\it weak} nodal domain 
 is defined similarly, but with $u_i u_j \ge 0$. Notice that strong nodal domains are disjoint, while the weak ones may overlap. 
 
 The notion of (weak and strong) nodal domains comes from Riemann geometry  and has become increasingly  useful in  graph theory and algorithmic applications
 (see  \cite{DLL, DLL24, DLL2, DLL5, DLL6}).  In \cite{DLL}, Dekel, Lee and Linial studied nodal domains of  the random graph $G(n,p)$ and raised the following conjectured  \cite[Question 1]{DLL}.

 \begin{conjecture}  With probability $1-o(1)$, all eigenvectors of $G(n,p)$ do not have zero coordinates. In other words, weak and strong nodal domains are the same. 
 \end{conjecture}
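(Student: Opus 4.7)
The plan is to reduce the conjecture to a small-ball estimate on an independent random vector and then control it via inverse Littlewood--Offord machinery. Fix $i\in\{1,\dots,n\}$, let $A_n^{(i)}$ denote the principal $(n-1)\times(n-1)$ minor of $A_n=A_n(p)$ with the $i$-th row and column removed, and let $X=(A_{ij})_{j\neq i}\in\{0,1\}^{n-1}$ be the $i$-th row with the diagonal entry dropped; crucially, $X$ is independent of $A_n^{(i)}$. A direct computation shows that $A_n$ admits an eigenvector $u$ with $u_i=0$ if and only if some eigenvector $v$ of $A_n^{(i)}$ satisfies $X\cdot v=0$: given such a $v$ with eigenvalue $\mu$, the zero-extension $\tilde v$ obtained by inserting $0$ at position $i$ is an eigenvector of $A_n$ with the same eigenvalue, and conversely. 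Thus, by the union bound over $i$, it suffices to prove
\[
\P\bigl(\exists\, j\in\{1,\dots,n-1\}:\ X\cdot v^{(j)}=0\bigr)=o(1/n),
\]
where $v^{(1)},\dots,v^{(n-1)}$ are the orthonormal eigenvectors of $A_n^{(i)}$.

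Next, condition on $A_n^{(i)}$. Writing $X_k=p+\xi_k$ with $\xi_k$ a centered Bernoulli, the event $\{X\cdot v^{(j)}=0\}$ becomes a concentration event $\{\sum_k \xi_k v^{(j)}_k=-p\sum_k v^{(j)}_k\}$ for a linear form in independent centered variables. The Hal\'asz / inverse Littlewood--Offord theory underlying Theorem \ref{theorem:main:ER} bounds this probability in terms of the arithmetic structure of the coefficient vector: if $\LCD(v^{(j)})\ge n^{C}$ for a sufficiently large constant $C$, then $\P(X\cdot v^{(j)}=0\mid A_n^{(i)})=O(n^{-A})$ for any prescribed $A$. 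A union bound over the $n-1$ eigenvectors and the $n$ choices of $i$ then finishes, provided that every eigenvector of $A_n^{(i)}$ has super-polynomially large LCD with probability at least $1-O(n^{-A-2})$.

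Establishing this no-structure property for the random eigenvectors of $A_n^{(i)}$ is the crux of the argument, and where I expect the main obstacle to lie. The naive $\ell^\infty$-delocalization bound $\|v^{(j)}\|_\infty=O(n^{-1/2+o(1)})$ is far too weak. Instead, one reuses the inverse Littlewood--Offord framework developed in \cite{TVsimple} and refined in the present paper: any hypothetical eigenvector $v^{(j)}$ with small LCD would force an abnormal concentration of an associated random inner product, which can in turn be ruled out by an iterative bootstrap that feeds the eigenvalue repulsion bound of Theorem \ref{theorem:main:ER} back into the structure theorem. Once this structural lemma is in place, the conjecture follows at once from the small-ball estimate and union bound above.
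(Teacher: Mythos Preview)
Your reduction is correct and is essentially the same as the paper's: restrict to the minor, identify the residual vector $v'$ as an (approximate) eigenvector of $A_n^{(i)}$, and then bound $\P(|X\cdot v'|\text{ small})$ via the ``eigenvectors are poor'' theorem (here Theorem~\ref{theorem:ER:poor} / Theorem~\ref{theorem:perturbation:poor}). For the exact conjecture ($u_i=0$) your reduction is in fact cleaner than the paper's, since $v'$ is then an \emph{exact} eigenvector of the minor and you bypass the stability Lemma~\ref{lemma:stable}. Two points deserve correction, however. First, your ``if and only if'' step silently assumes that the eigenvector $u'$ of $A_n^{(i)}$ coincides with one of the fixed orthonormal $v^{(j)}$; this fails when $A_n^{(i)}$ has a repeated eigenvalue, in which case $X\cdot u'=0$ does not force $X\cdot v^{(j)}=0$ for any measurable choice of the $v^{(j)}$. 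You must first restrict to the event that $A_n^{(i)}$ has simple spectrum, which holds with probability $1-O(n^{-A})$ by Theorem~\ref{theorem:main:ER} (or already by \cite{TVsimple}), and absorb the complement into the error.

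Second, your sketch of the structural lemma has the logical dependencies inverted. The eigenvalue repulsion bound of Theorem~\ref{theorem:main:ER} does \emph{not} feed into the no-structure theorem; rather, the no-structure theorem (Theorem~\ref{theorem:ER:poor}) is established first, by a direct covering argument over level sets of vectors with small regularized LCD, and the gap bound is deduced from it. The ``iterative bootstrap'' you allude to is the pigeonhole on scales in the proof of Theorem~\ref{theorem:ER:poor:tech} (passing from $\rho_{\delta,\alpha}$ to $\rho_{n^{1/2+\alpha}\delta,\alpha}$), combined with the inverse Littlewood--Offord Theorem~\ref{theorem:cilf} and a net/entropy count as in Lemma~\ref{lemma:ER:approx}; no appeal to eigenvalue gaps is needed or helpful there. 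Once you invoke Theorem~\ref{theorem:ER:poor} as a black box, your union bound over $j$ and $i$ gives the conjecture immediately.
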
 
 

We can now confirm this conjecture in the following stronger form

\begin{theorem}[Non-degeneration of eigenvectors]\label{theorem:perturbation:nodal}
Assume that $M_n=X_n+F_n$ as in Theorem \ref{theorem:main:perturbation}. Then for any $A$, there exists $B$ depending on $A$ and $\gamma$ such that
$$\P\Big(\exists \mbox{ an  eigenvector } v=(v_1,\dots,v_n) \mbox{ of $M_n$ with } |v_i|\le n^{-B} \mbox{ for some } i\Big) = O(n^{-A}). $$ 
\end{theorem}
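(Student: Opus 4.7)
The plan is to fix a coordinate index $i$ and reduce the smallness of $|v_i|$ to a small-ball estimate for the inner product of the $i$-th column of $M_n$ against an eigenvector of the principal minor $M' := M_{n-1}^{(i)}$. Writing $M_n v = \lambda v$ in block form with coordinate $i$ separated, and expanding the projection $v_{-i}$ in an orthonormal eigenbasis $w_1,\dots,w_{n-1}$ of $M'$ (with eigenvalues $\mu_1 \le \cdots \le \mu_{n-1}$), one obtains the standard identity
\[
|v_i|^{-2} = 1 + \sum_k \frac{\beta_k^2}{(\lambda-\mu_k)^2}, \qquad \beta_k := \langle X_i, w_k\rangle,
\]
together with the secular equation $\lambda - M_{ii} = \sum_k \beta_k^2/(\lambda-\mu_k)$, where $X_i$ denotes the off-diagonal part of the $i$-th column.

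Next, condition on $M'$, which is independent of $X_i$ and of $M_{ii}$. Since $M' = X'_{n-1} + F'_{n-1}$ is again of the perturbed-Wigner form with deterministic part of norm $\le n^{\gamma}$, Theorem~\ref{theorem:main:perturbation} applied to $M'$ yields, with probability $1-O(n^{-A-2})$, a lower bound $n^{-B_1}$ on every eigenvalue gap of $M'$, for some $B_1 = B_1(A,\gamma)$. Combined with the routine bound $\|X_i\| \le n^{O(1)}$, the tail contribution $\sum_{k:|\mu_k-\lambda| \ge n^{-B_1}/3} \beta_k^2/(\lambda-\mu_k)^2$ is at most $n^{2B_1 + O(1)}$. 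Provided $B$ is chosen much larger than $B_1$, the hypothesis $|v_i|\le n^{-B}$ then forces the sum in the identity above to be dominated by a single index $k^*$ (the unique eigenvalue of $M'$ in an $n^{-B_1}/3$-neighbourhood of $\lambda$), giving $\beta_{k^*}^2/(\lambda-\mu_{k^*})^2 \ge n^{2B}/4$. Isolating this term in the secular equation produces $\beta_{k^*}^2/(\lambda-\mu_{k^*}) = (\lambda - M_{ii}) - \sum_{k\ne k^*}\beta_k^2/(\lambda-\mu_k)$, whose right-hand side is of size at most $n^{B_1 + O(1)}$, so a short manipulation combining the two inequalities yields the key bound $|\beta_{k^*}| \le n^{-B + B_1 + O(1)}$.

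The problem is thereby reduced to a small-ball estimate: for every $k$, the probability that $|\langle X_i, w_k\rangle| \le n^{-B/2}$ should be $O(n^{-A-3})$. Since $w_k$ is determined by $M'$ alone and $X_i$ has independent sub-Gaussian (or Bernoulli, in the Erd\H{o}s-R\'enyi case) entries with unit variance, one invokes a probabilistic inverse Littlewood-Offord theorem to show that, with probability $1 - O(n^{-A-2})$, every $w_k$ has sufficiently unstructured form (large least common denominator, good spread) to guarantee $\P(|\langle X_i, w_k \rangle| \le t) \le t \cdot n^{O(1)}$ for $t \ge n^{-B}$. Taking the union bound over $i \in [n]$, $k \in [n-1]$, and the $n$ eigenvalues $\lambda = \lambda_j(M_n)$, and adding the negligible contributions from the bad events above, one obtains the claimed $O(n^{-A})$ failure probability for $B$ sufficiently large in terms of $A$ and $\gamma$.

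The hard part is the Littlewood-Offord step: the vector $w_k$ is itself a random eigenvector of a random matrix with possibly discrete entries, so small-ball bounds for a fixed deterministic $w$ are not enough --- one must certify the absence of arithmetic structure in the eigenvectors of $M'$ simultaneously for all $k$, with failure probability $O(n^{-A})$. This is exactly the type of statement produced by the paper's probabilistic inverse Littlewood-Offord machinery already developed for Theorems~\ref{theorem:main:Wigner} and~\ref{theorem:main:perturbation}; once that input is in hand, the reduction via the eigenvector identity and the secular equation outlined above is a short bookkeeping exercise.
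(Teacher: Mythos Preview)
Your argument is correct and follows the same overall strategy as the paper: separate coordinate $i$, use the eigenvalue-gap theorem for the minor $M_{n-1}$ to pin $v_{-i}$ near a genuine eigenvector $w_{k^*}$ of $M_{n-1}$, and then invoke Theorem~\ref{theorem:perturbation:poor} to bound $\P(|\langle X_i, w_{k^*}\rangle|$ small$)$.

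The only real difference is in how you pass from the approximate eigenvector $v_{-i}$ to the exact eigenvector $w_{k^*}$. The paper packages this step into a short stability lemma (Lemma~\ref{lemma:stable}): from $\|(M_{n-1}-\lambda)v'\|\le n^{-B/2}$ and the gap bound one gets $\|v'-u'\|\ll n^{-B/4}$ directly, and then $|v'^TX|\le n^{-B/2}$ transfers to $|u'^TX|\ll n^{-B/16}$ in one line. You instead work with the explicit eigenvector identity $|v_i|^{-2}=1+\sum_k\beta_k^2/(\lambda-\mu_k)^2$ and the secular equation, combining the two to squeeze out $|\beta_{k^*}|\le n^{-B+B_1+O(1)}$. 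Both routes are valid; the paper's is a bit shorter because the stability lemma hides the spectral decomposition, while yours makes the role of the single near-resonant eigenvalue $\mu_{k^*}$ more visible and gives slightly more precise control on $|\lambda-\mu_{k^*}|$ as a byproduct. Note also that the degenerate case $v_i=0$ (where your identity is unavailable) is harmless: then $v_{-i}$ is already an exact eigenvector of $M_{n-1}$ with $\langle X_i,v_{-i}\rangle=0$, so one jumps straight to the small-ball step.
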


 To make the picture complete, let us mention that recently Arora et. al. \cite{Arora}  proved that 
 with high probability, $G(n,p)$ has only two weak nodal domains, one corresponds to coordinates $u_i \ge 0$, and the other to 
 $u_i \le 0$. Combining this with Theorem \ref{theorem:perturbation:nodal}, we have 
 
 \begin{corollary}
 The following holds with probability $1-o(1)$ for $G(n,p)$. Each eigenvector has  exactly two strong nodal domains, which partition 
 the set of vertices. 
 \end{corollary}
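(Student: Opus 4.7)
The plan is to combine Theorem~\ref{theorem:perturbation:nodal} (applied to the adjacency model) with the Arora et al. weak-nodal-domain result to force the two notions of nodal domain to coincide.

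First I would apply Theorem~\ref{theorem:perturbation:nodal} to $A_n(p)$, writing it in the required form $A_n(p) = F_n + X_n$ with $F_n = p(J_n - I_n)$ and $X_n$ the centred Bernoulli matrix whose off-diagonal entries take values $1-p$ and $-p$. Here $\|F_n\|_2 = O(n)$ so one may take $\gamma = 1$, and the entries of $X_n$ have mean zero, variance $p(1-p)$, and bounded higher moments. The only deviation from the stated hypotheses is that the variance is $p(1-p)$ rather than $1$, but since $p$ is a fixed constant this only rescales $X_n$ by $O(1)$ and does not affect the argument (and in any case the remark at the end of Section~2 extends the method to entry variances as small as $n^{-1+c}$). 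Thus, for any fixed $A>0$ there exists $B=B(A,p)$ such that, with probability $1-O(n^{-A})$, every eigenvector $v$ of $A_n(p)$ satisfies $|v_i| > n^{-B}$ for all $i$; in particular no coordinate of any eigenvector vanishes.

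Next I would invoke \cite{Arora}: with probability $1-o(1)$, every eigenvector $v$ of $A_n(p)$ has exactly two weak nodal domains, namely $S_+(v) := \{i : v_i \ge 0\}$ and $S_-(v) := \{i : v_i \le 0\}$. Intersecting this event with the no-zero-coordinate event from Theorem~\ref{theorem:perturbation:nodal}, which also has probability $1-o(1)$, we obtain an event of probability $1-o(1)$ on which both hold simultaneously for every eigenvector.

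On this intersection, for any eigenvector $v$ the sets $S_+(v)$ and $S_-(v)$ are disjoint (because $v_i \neq 0$ for every $i$) and partition $[n]$. Since all coordinates within $S_+(v)$ are strictly positive and all within $S_-(v)$ are strictly negative, each of these two sets satisfies the strict-sign condition $v_i v_j > 0$ and is a connected component of the graph restricted to a sign class, hence a strong nodal domain; conversely, any strong nodal domain is contained in either $S_+(v)$ or $S_-(v)$, so the weak and strong nodal-domain decompositions coincide. This gives exactly two strong nodal domains partitioning the vertex set, which is the desired conclusion. There is no real obstacle here beyond the bookkeeping check that the hypotheses of Theorem~\ref{theorem:perturbation:nodal} genuinely cover the centred adjacency matrix, which is routine.
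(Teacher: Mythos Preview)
Your proposal is correct and follows exactly the route the paper takes: the paper simply states that the corollary follows by combining the Arora--Bhaskara result on weak nodal domains with Theorem~\ref{theorem:perturbation:nodal}, and you have spelled out this combination in detail, including the routine verification that $A_n(p)=p(J_n-I_n)+X_n$ fits the hypotheses of Theorem~\ref{theorem:perturbation:nodal}.
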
 
 
 \subsection{Numerical Analysis} 
 
Our  results can also be used to guarantee polynomial running time for certain algorithms. Let us consider an example. A basic   problem in numerical analysis  is to compute the leading eigenvector and eigenvalue of a large matrix.  A well-known power iteration method, designed for this purpose,  works as follows. Given $F$, a large symmetric matrix as input, let $u_0:= u$ be an arbitrary unit vector and consider 

$$u_{k+1} :=  \frac{ F u_k }{ \| F u_k \|_2 } . $$

For simplicity, assume that $F$ is positive semi-definite and let $0\le \lambda_1 \le \dots \le \lambda_n$ be the eigenvalues of $F$ (with, say, corresponding eigenvectors  $v_1, \dots, v_n$).  If $\lambda_n $ is strictly larger than $\lambda_{n-1}$, then $u_{k} $ converges to $v_n$ and $\| Fu_k \|_2$ converges to $\lambda_n$.

The heart of the matter is, of course, the rate of convergence, which is geometric with base $\frac{\lambda_{n-1} } {\lambda_n } $.  This means that to obtain 
an $\eps$ error term, we need to iterate $\Theta ( \frac{\lambda_n }{ \lambda_n -\lambda_{n-1} }  \log \frac{1}{\eps } )$ steps (see for instance \cite[Chapter 8]{GvL}.) 

For simplicity, assume that $\lambda_n = \| F \|_2  = n^{O(1)}$.  The algorithm is efficient (run in polynomial time)  if $\lambda_n -\lambda_{n-1} $ is polynomially large. However, if $\lambda_n -\lambda_{n-1} $ is exponentially small in $n$, then the algorithm  take exponentially many steps. 

One can use our result to avoid this difficulty.  The idea is to  artificially perturb $F$ by a random matrix $ X_n$.
Theorem \ref{theorem:main:perturbation} shows that with high probability, the gap $\lambda_n (F +  X_n ) - \lambda_{n-1} (F + X_n)$ is polynomially large, 
which in turn guarantees a polynomial running time. On the other hand, adding $X_n$ (or a properly scaled version of it) will not change 
$\lambda_n$  and $v_n$ significantly. For instance, one  can immediately apply Weyl's bound here, but 
 better results  are available in more specific cases. 
 
This argument is closely related to the notion of {\it smoothed analysis} introduced by Spielman and Teng \cite{ST}. We will discuss similar applications in a separate note.

\section{Proof strategy}\label{subsection:strategy} 

We first  consider  Theorem \ref{theorem:main:Wigner}.  In what follows, all eigenvectors have unit length. Assume that the entries $\xi_{ij}$ are iid copies of a random variable $\xi$. 
Consider a fixed vector $x =(x_1, \dots, x_n)$, we  introduce the \emph{small ball probability}  

$$\rho_\delta(x):=\sup_{a\in \R}\P(|\xi_1 x_1+\dots + \xi_n x_n-a|\le \delta).$$

where $\xi_1,\dots,\xi_{n}$ are iid  copies of $\xi$. Let $X_{n-1} $ be an $(n-1) \times (n-1)$ minor of $X_n$. 
In this section we will to reduce Theorem \ref{theorem:main:Wigner} to the following

\begin{theorem}\label{theorem:Wigner:poor}   There exist positive constants $c$ and $\alpha_0$ such that the following holds with probability $1-O(\exp(-\alpha_0n))$: for all $n^{-c}<\alpha <c$ and 
every unit eigenvector $v$ of $X_{n-1}$  and for all $\delta \ge n^{-c/\alpha}$,

$$ \rho_{\delta}(v) =O \left(\frac{\delta}{\sqrt{\alpha}}\right).$$
\end{theorem}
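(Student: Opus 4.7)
The plan is to use inverse Littlewood--Offord theory to convert the small-ball estimate into a structural statement on $v$. Recall the $\alpha$-essential LCD of a unit vector $v$, defined (up to variants) by
$$D_\alpha(v) := \inf\bigl\{t > 0 : \dist(tv, \Z^{n-1}) \le \min(\alpha\|tv\|_2, \sqrt{\alpha n})\bigr\}.$$
The inverse theorem of Rudelson--Vershynin (or Nguyen--Vu) gives $\rho_\delta(v) \le C\delta/\sqrt{\alpha} + C/(D_\alpha(v)\sqrt{\alpha})$, so under the hypothesis $\delta \ge n^{-c/\alpha}$ it suffices to prove the \emph{structural} statement: with probability $1-O(\exp(-\alpha_0 n))$, every unit eigenvector $v$ of $X_{n-1}$ satisfies $D_\alpha(v) \ge n^{c/\alpha}$ throughout the admissible range of $\alpha$. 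Since $\alpha$ ranges over a dyadic scale, this can be reduced to a single fixed value of $\alpha$ by a dyadic union bound.

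\textbf{Step 2: Incompressibility of eigenvectors.} I would first show that with exponentially high probability every unit eigenvector of $X_{n-1}$ is \emph{incompressible} in the Rudelson--Vershynin sense: at least $(1-\kappa)n$ of its coordinates have modulus comparable to $1/\sqrt n$. This is a standard consequence of sub-gaussian lower bounds on the least singular value of shifted Wigner matrices $X_{n-1}-\mu I$, combined with a net over $\mu$. Incompressibility is the natural precondition for the LCD to be meaningful, and reduces the problem to controlling $\CB_\alpha(D) := \{v \in S^{n-2} : v \text{ incompressible},\ D_\alpha(v) \le D\}$ with $D := n^{c/\alpha}$.

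\textbf{Step 3: Net argument and fixed-vector estimate.} Standard covering lemmas produce an $\eta$-net $\CN$ of $\CB_\alpha(D)$ at scale $\eta \approx n^{-O(1)}/D$, with $|\CN| \le (CD/\sqrt n)^{n-1}$ up to polynomial factors. For a fixed $v_0 \in \CN$, I need to bound the probability that some unit eigenvector $v$ of $X_{n-1}$ lies within $\eta$ of $v_0$. On this event, $\|X_{n-1} v_0 - \mu v_0\|_2 = O(\eta\sqrt n)$ for the corresponding eigenvalue $\mu$, using the (exponentially likely) operator norm bound $\|X_{n-1}\|_2 = O(\sqrt n)$. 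One reads $\mu$ off a single "good" coordinate of $v_0$ (available by incompressibility); then each remaining row $k$ of the approximate eigenequation yields a small-ball event
$$\sum_{j} \xi_{kj}(v_0)_j \in \mu(v_0)_k + O(\eta\sqrt n).$$
Splitting $\{1,\dots,n-1\}$ into two halves $A$ and $B$ and conditioning on the diagonal blocks $X[A,A]$ and $X[B,B]$, the contributions $\sum_{j\in B}\xi_{kj}(v_0)_j$ across $k \in A$ become genuinely independent. By the definition of the LCD applied to $v_0|_B$ (which remains of order $D$ by incompressibility), each such event has probability $O\bigl((1/D + \eta\sqrt n)/\sqrt\alpha\bigr)$, and multiplying over $\Omega(\alpha n)$ rows gives a per-vector probability of order $(C/D)^{c\alpha n}$. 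This beats $|\CN|$ exactly when $D \le n^{c/\alpha}$, and a union bound over $\CN$, together with the dyadic reduction in $\alpha$, closes the argument.

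\textbf{Main obstacle.} The hardest step is Step 3, for two interrelated reasons. First, the symmetry of $X_{n-1}$ couples every pair of rows through a shared entry, so genuine per-row independence is only recovered by a block decoupling of the type described above; one then has to quantify how the LCD of a restriction $v_0|_B$ degrades (it typically drops by only a bounded factor for incompressible $v_0$, but this must be proved). Second, the net cardinality $(CD/\sqrt n)^{n-1}$ must be matched against the per-vector probability $(C/D)^{c\alpha n}$, and this tight balance is precisely what forces the quantitative trade-off $\delta \ge n^{-c/\alpha}$ appearing in the theorem. Carrying out the decoupling while preserving enough LCD structure for the small-ball bounds to stick is the technical heart of the proof.
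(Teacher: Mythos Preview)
Your architecture (compressible/incompressible split, LCD small-ball bound, covering) matches the paper, but Step~3 as written does not close, and the obstacle you flag is fatal rather than merely technical.

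First, the balance fails arithmetically: a net of size $(CD/\sqrt n)^{n-1}$ against a per-vector probability of $(C/D)^{c\alpha n}$ leaves a factor $D^{(1-c\alpha)n}$, which diverges for any $D \gg \sqrt n$ (and you want $D = n^{c/\alpha}$). To kill the $D^n$ coming from the net you need the per-vector exponent to be of order $n$, not $\alpha n$. Second, your claim that the LCD of $v_0|_B$ drops by only a bounded factor is false for the ordinary LCD you wrote down: an incompressible vector can have huge global LCD (forced by a few generic coordinates) while a fixed half is highly structured. So a half--half split gives you neither enough rows nor a controlled LCD on the complementary block; and if instead you shrink $A$ to preserve the LCD on $B$, you lose rows and land exactly at your $(C/D)^{c\alpha n}$.

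The paper resolves this, following Vershynin, by replacing $D_\alpha(v)$ with the \emph{regularized} LCD
\[
\LCDhat_{\kappa,\gamma}(v,\alpha)=\max\bigl\{\LCD_{\kappa,\gamma}(v_I/\|v_I\|):\ I\subset\spread(v),\ |I|=\lceil\alpha n\rceil\bigr\},
\]
which \emph{by construction} hands you a segment $I$ of size $\alpha n$ on which the ordinary LCD is already large. The decoupling is then $I$ versus $I^c$, not half versus half: for each of the $(1-\alpha)n$ rows $k\notin I$ the partial sum $\sum_{j\in I}\xi_{kj}(v_0)_j$ uses only off-diagonal entries, so these $(1-\alpha)n$ events are genuinely independent and each has small-ball probability $O(\beta/\sqrt\alpha)$. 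Tensorizing gives a per-vector bound with exponent $(1-\alpha)n$; the net for $\{\LCDhat\le D\}$ is built segment by segment and has size $(CD)^n/(\sqrt{\alpha n})^{c'n/2}$ at scale $\beta=\kappa/(\sqrt\alpha\,D)$, and now the $D$'s cancel up to $D^{\alpha n}$, which is exactly what forces the restriction $D\le n^{c/\alpha}$. In short: make the ``source'' block \emph{small} and use $\LCDhat$ to certify it still carries the arithmetic structure; your proposal goes the wrong way on this trade-off.
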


We will discuss how to establish this result later in this section, and give the formal details of proof in later sections.
Assuming this theorem for the moment, we now finish the proof of Theorem \ref{theorem:main:Wigner}.
To start, let  $\CB$   be the event that the spectrum of $X_n$ belongs to a controlled interval,

\begin{equation}\label{eqn:Xn:norm}
\CB=\{\omega: \lambda_1(X_n),\dots,\lambda_n(X_n) \subset [-10\sqrt{n},10 \sqrt{n}]\}.
\end{equation}

Standard results in random matrix theory (such as \cite{AGZ},\cite{Tao-book}) shows that $\CB$ holds with probability  $1- \exp(-\Theta(n))$. 


Now let $1\le i\le n-1$, and consider the event $\CE_i$ that $X_n $  satisfies  $\lambda_{i+1}-\lambda_i \le \delta n^{-\frac{1}{2}}$.
For each $1\le j\le n$, let $\CG_{i,j}$ be the event $\CE_i$ and that  the  eigenvector $w=(w_1,\dots,w_n)^T$ with eigenvalue $\lambda_i(M_{n})$ satisfies
$|w_j|\gg  T$, where $T$ is to be chosen later. The following delocalization result helps us to choose $T$ properly.

\begin{theorem}\label{theorem:delocalization}   
 Let $X_n$ be a Wigner matrix as in Theorem \ref{theorem:main:Wigner}. There are constants $c_1 c_2, c_3  >0$ such that the following holds. 
 With probability at least $1 -\exp( -c_1 n)$, every eigenvector of $X_n$ has at least $c_2 n$ coordinates with absolute value at least $c_3 n^{-1/2} $. The same statement holds for the adjacency matrix $A_n (p)$, where 
 $p$ is a constant. (The constants $c_1,c_2, c_3$ may depend on the distribution of the entries of $X_n$ and on $p$ in the $A_n (p)$ case.) 
\end{theorem}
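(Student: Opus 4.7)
The plan is to prove the contrapositive: with probability at least $1 - \exp(-c_1 n)$, no unit eigenvector $v$ of $X_n$ is \emph{compressible}, where compressible means that fewer than $c_2 n$ coordinates of $v$ have magnitude $\geq c_3 n^{-1/2}$. Writing $S$ for the set of coordinates with $|v_i|\ge c_3 n^{-1/2}$, compressibility of a unit vector $v$ forces $|S| < c_2 n$ and $\|v|_{S^c}\|_2^2 \leq (1-c_2)c_3^2$, so $v$ lies within $\ell_2$-distance $2 c_3$ of the set $\CC$ of unit vectors supported on at most $c_2 n$ coordinates. Intersecting with the event $\CB$ of \eqref{eqn:Xn:norm}, which confines every eigenvalue to $\Lambda := [-10\sqrt n, 10\sqrt n]$ and forces $\|X_n\|_{\mathrm{op}} \leq 10\sqrt n$, it therefore suffices to establish the uniform quantitative invertibility
\[
\P\Bigl(\inf_{\lambda \in \Lambda,\ u \in \CC} \|(X_n - \lambda I) u\|_2 \leq \tfrac{\sqrt n}{4}\Bigr) \leq \exp(-c_1 n);
\]
the $40 c_3 \sqrt n$ perturbation from approximating $v$ by an element of $\CC$ is absorbed by choosing $c_3$ small.

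The key single-pair estimate is that for any fixed $u_0 \in \CC$ supported on a set $S \subseteq [n]$ with $|S| \leq c_2 n$ and any fixed $\lambda_0 \in \Lambda$,
\[
\P\bigl(\|(X_n - \lambda_0 I) u_0\|_2 \leq \tfrac{\sqrt n}{2}\bigr) \leq \exp(-c_4 n)
\]
for some $c_4 > 0$ depending only on the distribution of the entries. Indeed, for each row index $i \in [n] \setminus S$, the $i$-th coordinate of $(X_n - \lambda_0 I) u_0$ equals $\sum_{j \in S} X_{ij} u_0(j)$ (the term $\lambda_0 u_0(i)$ vanishes), a centered sub-gaussian sum with variance $\|u_0\|_2^2 = 1$. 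The variables $Z_i := (\sum_{j \in S} X_{ij} u_0(j))^2$ for $i \in [n]\setminus S$ are mutually independent, since the relevant entries $X_{ij}$ with $i \notin S$, $j \in S$ occupy pairwise distinct off-diagonal positions of the symmetric matrix; each $Z_i$ has mean $1$ and sub-exponential norm bounded by a constant depending on the sub-gaussian moment. Bernstein's inequality then gives $\sum_{i \notin S} Z_i \geq (n-|S|)/2 \geq n/4$ with probability $\geq 1 - \exp(-c_4 n)$, proving the claim. For the adjacency matrix $A_n(p)$, the decomposition $A_n(p) = p(J_n - I_n) + \tilde A_n$ into a deterministic part and a centered-Bernoulli Wigner-type matrix reduces the analysis to the computation that the $i$-th coordinate for $i \notin S$ is $\sum_{j \in S}\tilde A_{ij} u_0(j) + p\langle \mathbf 1, u_0\rangle$, a random variable of variance $p(1-p)$ shifted by a constant; the same Bernstein argument yields $\|(A_n(p) - \lambda_0 I) u_0\|_2 \gtrsim \sqrt{p(1-p)n}$ with exponentially small failure probability, and the top eigenvector of $A_n(p)$, which is close to $\mathbf 1/\sqrt n$ and thus manifestly spread, is handled separately.

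The final step is a standard $\varepsilon$-net union bound. Since the target threshold $\sqrt n/4$ is of order $\sqrt n$ and $\|X_n - \lambda I\|_{\mathrm{op}} \leq 20 \sqrt n$ on $\CB$, a coarse $\eta_v$-net $\mathcal{N}_{\CC}$ of $\CC$ in $\ell_2$ and an $\eta_\lambda$-net $\mathcal{N}_\Lambda$ of $\Lambda$ with $\eta_v$ a small absolute constant (say $1/160$) and $\eta_\lambda$ of order $\sqrt n$ (say $\sqrt n/16$) suffice: for $(\lambda, u)$ approximated by $(\lambda_0, u_0)$ on the nets we have $\|(X_n - \lambda) u\| \geq \|(X_n - \lambda_0) u_0\| - 20\sqrt n\cdot \eta_v - \eta_\lambda \geq \sqrt n/2 - \sqrt n/8 - \sqrt n/16 > \sqrt n/4$. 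Standard volume estimates give $|\mathcal{N}_\Lambda| = O(1)$ and $|\mathcal{N}_{\CC}| \leq \binom{n}{\lfloor c_2 n\rfloor}(C/\eta_v)^{c_2 n} \leq \exp(c_2 \log(C_1/c_2)\, n)$ for some absolute $C_1$, so the union bound yields failure probability $\leq \exp\bigl((c_2 \log(C_1/c_2) - c_4)n\bigr) \leq \exp(-c_1 n)$ provided $c_2$ is chosen small enough that $c_2 \log(C_1/c_2) < c_4/2$, with $c_3$ then chosen accordingly. The main obstacle is purely the bookkeeping of these constants: since $c_4$ is an absolute constant depending only on the distribution while $c_2 \log(1/c_2) \to 0$ as $c_2 \to 0$, a sufficiently small choice of $c_2$ always closes the argument.
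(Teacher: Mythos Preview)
For the Wigner case your argument is correct and is essentially the standard ``compressible vector'' net argument that underlies both Lemma~\ref{lemma:Wigner:comp} in the paper and the result \cite[Theorem~3.1]{DLL} from which the paper's appendix deduces Theorem~\ref{theorem:delocalization}; you simply supply the details that the paper outsources. (One harmless slip: the bound on $\|v|_{S^c}\|_2^2$ should be $c_3^2$ rather than $(1-c_2)c_3^2$, since you only know $|S^c|\le n$; this does not affect the conclusion that $\dist(v,\CC)\le 2c_3$.)

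For $A_n(p)$ the net step as written has a genuine gap. You pass from the net point $u_0$ to a nearby $u$ using $\|X_n-\lambda I\|_{\mathrm{op}}\le 20\sqrt n$, but for the adjacency matrix $\|A_n(p)\|_{\mathrm{op}}\sim np$, so with a constant mesh $\eta_v$ the perturbation error $\|A_n-\lambda\|_{\mathrm{op}}\cdot\eta_v$ is of order $n$, not $\sqrt n$, and the comparison $\|(A_n-\lambda)u\|\ge\|(A_n-\lambda_0)u_0\|-\sqrt n/4$ fails. Handling the top eigenvector separately does not help here: the obstruction is the Lipschitz constant of $u\mapsto (A_n-\lambda)u$, not the location of $\lambda$. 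The fix is to subtract the rank-one part first. Set $\tilde A_n:=A_n(p)-p(J_n-I_n)$, so $\|\tilde A_n\|_{\mathrm{op}}=O(\sqrt n)$ with the required probability. For any unit eigenvector $v$ of $A_n(p)$ with $|\lambda|\le 10\sqrt n$ one has $(\tilde A_n-\lambda)v=-p\langle\mathbf 1,v\rangle\mathbf 1+pv$, and the operator-norm bound forces $|\langle\mathbf 1,v\rangle|=O(1)$. One then runs your net argument for the event
\[
\inf_{\lambda\in\Lambda,\ u\in\CC,\ |a|\le C}\bigl\|(\tilde A_n-\lambda)u-a\mathbf 1\bigr\|_2\le \tfrac{\sqrt n}{4},
\]
discretizing also the bounded scalar $a$ at $O(1)$ cost. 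Your single-point Bernstein estimate already covers this variant, since for $i\notin S$ the $i$-th coordinate is $\sum_{j\in S}\tilde A_{ij}u_0(j)-a$, a centred sub-gaussian variable shifted by a constant, and the shift only increases $\E Z_i$. This is exactly the device the paper uses for $A_n(p)$ in Section~\ref{section:ER}; see \eqref{ER:bojo}--\eqref{ER:dvin}.
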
 

We prove this theorem in Appendix \ref{localization}.

Write
\begin{equation}\label{discussion:mn-split}
 X_n = \begin{pmatrix} X_{n-1} & X \\ X^* & x_{nn} \end{pmatrix}
\end{equation}

where $X$ is a column vector.  From the Cauchy interlacing law, we observe that $\lambda_i(X_n) \le \lambda_i(X_{n-1})\le \lambda_{i+1}(X_n)$. Let 
$u$ be  the (unit) eigenvector of $\lambda_i (X_n) $; we write 
$u= (w,b)$, where $w$ is a vector of length $n-1$ and $b$ is a scalar.  We have 

$$ \begin{pmatrix} X_{n-1} & X \\ X^* & x_{nn} \end{pmatrix} \begin{pmatrix} w \\ b \end{pmatrix} = \lambda_i(X_n) \begin{pmatrix} w \\ b \end{pmatrix} .$$

Extracting the top $n-1$ components of this equation we obtain

$$ (X_{n-1}-\lambda_i(X_n)) w + b X = 0.$$

Let  $v $ be the unit eigenvector  of $X_{n-1}$ corresponding to  $\lambda_i(X_{n-1})$. By multiplying with $v^T$, we obtain 

$$|b v^T X | = |v^T  (X_{n-1}-\lambda_i(X_n)) w| =|\lambda_i(X_{n-1})-\lambda_i(X_n)| |v^T w| .$$

We conclude that, if $\CE_i$ holds, then  $|b v^T X| \le \delta n^{-1/2} $. If we assume that $\CG_{i,n}$ also holds, then we therefore have

$$ |v^T  X | \le \delta \frac{n^{-1/2} }{T} . $$

If one can choose $T = \Omega (n^{-1/2} )$, the RHS is $O(\delta)$, and thus we reduce the problem to bounding  the probability that $X_{n-1}$ has an eigenvector 
$v$ such that $|v^T X| = O(\delta)$, for which we can apply  Theorem \ref{theorem:Wigner:poor}.

Of course we cannot assume that the last coordinate $b$ of the eigenvector of $\lambda_i (X_n)$ to be large. Apparently, this eigenvector has a coordinate 
of order $n^{-1/2} $, but a trivial union bound argument would cost us a factor $n$. We can avoid this factor by using Theorem \ref{theorem:delocalization}. 

Notice that there is no specific reason to look at the last coordinate. Thus, if we instead look at a random coordinate (uniformly between $1$ and $n$ and split $X_n$ accordingly), then 
we have 

$$\P (\CE_i) \le \frac{n}{N} \P \left( |v^T X | \le \delta \frac{n^{-1/2} }{T} \right) + \P( n_T < N),  $$  where $n_T$ is the number of coordinates with absolute values at least $T$; and $v,X$ are the corresponding eigenvector and column vector.

Now choose $T = c_3 n^{-1/2} $ and $N= c_2 n$.  By Theorem \ref{theorem:delocalization}, $\P (n_T < N) \le \exp( -c_1 n)$. Thus, we have 

\begin{equation}\label{badevent1} 
\P (\CE_i) \le  c_2^{-1} \P \left( |v^T X | \le c_3^{-1} \delta  \right) + \exp(- c_1 n).  
\end{equation} 

To bound the RHS, we recall the definition of small probability at the beginning of the section. After a proper rescaling of $\delta$ by a factor $c_3^{-1}$, for any $\eps >0$

$$ \P(| v^T X | \le   \delta )   \le \P (|v^T X | \le \delta |  \rho_{\delta} (v)  < \eps) + \P (\rho_{\delta} (v) \ge \eps)  \le \eps + \P( \rho_{\delta}(v) \ge \eps). $$

Set $\eps := C \frac{\delta}{\alpha }$ for a sufficiently large constant $C$, Theorem \ref{theorem:Wigner:poor} then implies that 

\begin{equation} \label{badevent2}  
\P(| v^T X | \le  \delta )   \le C \frac{\delta}{\sqrt \alpha} + \exp(- \Omega (n) ). 
\end{equation} 

This completes the proof of Theorem \ref{theorem:main:Wigner} assuming Theorem \ref{theorem:Wigner:poor}. To prove Theorem  \ref{theorem:main:Wigner:d}, we will follow the same strategy, but the analysis is more delicate. We refer the reader to Section \ref{section:Wigner:d} for more detail.

For treating the general model $M_n := F_n + X_n$, instead of 
Theorem \ref{theorem:Wigner:poor}, a similar argument reduces one to the task of proving the following analogue of Theorem \ref{theorem:Wigner:poor}.

\begin{theorem}\label{theorem:perturbation:poor}  For any $A,\gamma>0$, there exist $\alpha_0,B>0$ depending on $A$ and $\gamma$ such that the following holds with probability $1-O(\exp(-\alpha_0n))$: every unit eigenvector $v$ of $M_{n-1}=X_{n-1}+F_{n-1}$ with eigenvalue $\lambda$ of order $n^{O(1)}$ obeys the anti-concentration estimate
$$ \rho_{n^{-B}}(v) = O(n^{-A}).$$
\end{theorem}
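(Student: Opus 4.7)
The plan is to mirror the argument for Theorem \ref{theorem:Wigner:poor} while accommodating the deterministic shift $F_{n-1}$ and the wider range of admissible eigenvalues $|\lambda| = O(n^{\gamma + O(1)})$. The tool set is the same: an inverse Littlewood-Offord principle that converts the hypothesis $\rho_{n^{-B}}(v) \geq n^{-A}$ into a statement that $v$ has small essential $\LCD$, followed by a net-plus-union-bound argument over such structured vectors.

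Argue in the contrapositive. Suppose that with probability larger than $\exp(-\alpha_0 n)$ there is a unit eigenvector $v$ of $M_{n-1}$ with $|\lambda|=O(n^{\gamma + O(1)})$ and $\rho_{n^{-B}}(v) \geq n^{-A}$. By the delocalization input (Theorem \ref{theorem:delocalization} applied to $M_{n-1}$, which extends to the $X+F$ setting through the same concentration estimates), we may assume $v \in \Incomp$. The inverse Littlewood-Offord theorem then yields a vector $w$ with $\|v-w\|_2 \leq n^{-B + O(1)}$ and $\LCD_\alpha(w) \leq D$ for some $D \leq n^{C_1 (A + B)}$ at a fixed spread parameter $\alpha = \Omega(1)$.

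Fix such a structured $w$ and a candidate $\lambda$ in an $\eps$-net $\Lambda$ of $[-n^{\gamma+O(1)}, n^{\gamma+O(1)}]$, with $|\Lambda| \leq n^{O(\gamma)}/\eps$. Writing $M_{n-1} = F_{n-1} + X_{n-1}$, the $i$-th coordinate of $(M_{n-1} - \lambda I)w$ takes the form
\[
\sum_{j \neq i}\xi_{ij} w_j + x_{ii} w_i + \bigl((F_{n-1} - \lambda I) w\bigr)_i,
\]
where the random contributions $\xi_{ij}$ and $x_{ii}$ are independent across rows. Hence
\[
\Pr\bigl(\|(M_{n-1} - \lambda I) w\|_\infty \leq \eps\bigr) \;\leq\; \prod_{i=1}^{n-1} \rho_{\eps}(w_{(i)}),
\]
and since $w$ is incompressible with $\LCD_\alpha(w) \leq D$, the classical Erd\H{o}s--Littlewood--Offord estimate under structure gives $\rho_\eps(w_{(i)}) \leq O(\eps D + e^{-c n})$. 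Taking the product over $\Omega(n)$ coordinates yields $\bigl(C(\eps D + e^{-cn})\bigr)^{\Omega(n)}$. A standard net at level $D$ covers such incompressible structured vectors with at most $(CD)^{n-1}$ points; unioning over $w$ in this net and over $\lambda \in \Lambda$, with $\eps$ taken to be a sufficiently small inverse polynomial in $n$, and dyadically summing over $D$, gives the total failure probability $\exp(-\alpha_0 n)$.

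The main obstacle is calibrating the balance between the three parameters $D$, $\eps$, and $n^{-B}$: the per-vector probability $(\eps D)^n$ must dominate the $D^n$-sized net, forcing $\eps D$ to be strictly subconstant; simultaneously, $\eps$ must be small enough that the approximation $\|v-w\|_2 \leq n^{-B + O(1)}$ transfers to the matrix side via $\|(M - \lambda)(v - w)\| \leq \|M - \lambda\| \cdot \|v-w\|$ while keeping the entire approximation within $\eps$. It is this delicate trade-off, coupled with the $n^{O(\gamma)}/\eps$ cost of the $\lambda$-net, that forces $B$ to grow linearly in both $A$ and $\gamma$, consistent with the explicit bound $B > (5A+6)\max\{1/2,\gamma\} + 5$ mentioned after Theorem \ref{theorem:main:perturbation}.
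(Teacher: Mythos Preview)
Your outline has two genuine gaps that prevent it from closing, and the paper takes a different route precisely because of them. First, the LCD small-ball machinery you invoke (Theorem \ref{theorem:RV:1}) requires $\xi$ to be sub-gaussian, whereas the model $M_n=F_n+X_n$ only assumes a finite $(2+\eps_0)$-moment; the paper says this explicitly at the start of Section \ref{section:perturbation}. Second, even granting sub-gaussianity, your union bound does not balance. A net for vectors of $\LCD \approx D$ has size $(CD)^n/(\sqrt{\alpha n})^{c'n/2}$ (Lemma \ref{lemma:Wigner:structure:1}); the factor $(\sqrt{\alpha n})^{-c'n/2}$ is what makes the Wigner argument work. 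Your stated net size $(CD)^{n-1}$ drops exactly this saving, and then you need the per-vector probability to be $o(1/D)^n$, i.e.\ $\eps D\ll 1$; but the small-ball bound $\rho_\eps\ll \eps$ only holds for $\eps\geq 1/\LCD\approx 1/D$, forcing $\eps D\geq 1$. The inequality ``$\rho_\eps(w_{(i)})\le O(\eps D+e^{-cn})$'' is not a consequence of any LCD estimate and cannot be used to beat the net. (Your appeal to Theorem \ref{theorem:delocalization} for $M_{n-1}$ is also unsupported: that result is proved only for Wigner matrices and $A_n(p)$, and the paper's proof of Theorem \ref{theorem:perturbation:poor} does not use delocalization at all.)

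The paper's actual argument replaces LCD by the GAP-based inverse Littlewood--Offord theorem (Theorem \ref{theorem:cilf}), which holds under the weak moment hypothesis. A crucial preliminary reduction (Theorem \ref{theorem:perturbation:poor-tech}) is a multi-scale pigeonhole: if $\rho_{n^{-B}}(v)\ge n^{-A}$, then at some scale $\delta\in[n^{-B},n^{-B/2}]$ one has $\rho_{n^{\gamma'}\delta,\alpha}(v)\le n^{0.49}\rho_{\delta,\alpha}(v)$. This bounded-ratio condition is the engine of the incompressible case: it pins down $\rho_{n^{\gamma'}\delta}(v\downharpoonright_I)\le n^{0.49}q$ on a short segment $I$, and the GAP approximation then yields a net of size $n^{-n/2+O(\alpha n)}q^{-n}$ for the data $(\tilde v,w',\tilde\lambda)$ (Lemma \ref{lemma:perturbation:approx}), against which the tensorized per-vector probability $(n^{0.49}q)^{n}$ wins by $n^{-0.01n+O(\alpha n)}$. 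Your proposal is missing both the pigeonhole reduction and the GAP-based net, and without them the argument cannot be completed in this generality.
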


Let us now discuss  the proof of Theorem  \ref{theorem:Wigner:poor} and Theorem \ref{theorem:perturbation:poor}.  This proof relies on the general theory of small ball probability. 
Recent 
results from inverse Littlewood-Offord theory developed by the second and third author (see e.g. \cite{TVsurvey}, \cite{NVsur}) and by Rudelson and Vershynin (see e.g. \cite{rv}) show that if 
the vector $x$ does not have rich {\it additive structure}, then the small ball probability $\rho_\delta(x)$ is close to $\delta$.
Thus,  the key ingredient for proving Theorem \ref{theorem:Wigner:poor} and Theorem \ref{theorem:perturbation:poor} is to quantify the following

\vskip .2in
\begin{heuristic}\label{heuristic}
With very high probability  the  eigenvectors of $M_{n-1}$ do not have additive structures.
\end{heuristic}


\subsection{Notation}\label{notation-sec}

Throughout this paper, we regard $n$ as an asymptotic parameter going to infinity (in particular, we will implicitly assume that $n$ is larger than any fixed constant, as our claims are all trivial for fixed $n$), and allow all mathematical objects in the paper to implicitly depend on $n$ unless they are explicitly declared to be ``fixed'' or ``constant''.  We write $X = O(Y)$, $X \ll Y$, or $Y \gg X$ to denote the claim that $|X| \leq CY$ for some fixed $C$; this fixed quantity $C$ is allowed to depend on other fixed quantities such as the $(2+\eps_0)$-moment (or the sub-gaussian parameters) of $\xi$  unless explicitly declared otherwise.  We also use $o(Y)$ to denote any quantity bounded in magnitude by $c(n) Y$ for some $c(n)$ that goes to zero as $n \to \infty$.  Again, the function $c(.)$ is permitted to depend on fixed quantities. 

For a square matrix $M_n$ and a number $\lambda$, for short we will write $M_n-\lambda$ instead of $M_n-\lambda I_n$. All the norms in this note, if not specified, will be the usual $\ell_2$-norm.

The rest of this paper is organized as follows. We first give a full proof of Theorem \ref{theorem:main:Wigner} and  Theorem \ref{theorem:main:Wigner:d} in Section \ref{section:Wigner:1} and Section \ref{section:Wigner:d}. We then prove Theorem \ref{theorem:main:perturbation} in Section \ref{section:perturbation}, and  sharpen the estimates for Erd\H{o}s-R\'enyi graphs in Section \ref{section:ER}. The note is concluded by an application in Section \ref{section:nodal}.

\section{Consecutive gaps for Wigner matrices: proof of Theorem \ref{theorem:Wigner:poor}}\label{section:Wigner:1}

In this section we prove Theorem \ref{theorem:Wigner:poor}, which as discussed previously implies Theorem \ref{theorem:main:Wigner}.
Our treatment in this section is based on the work by Vershynin in \cite{vershynin}. First of all, we recall the definition of compressible and incompressible vectors.

\begin{definition} Let $c_0, c_1 \in (0,1)$ be two numbers (chosen depending on the sub-gaussian moment of $\xi$.) A vector $x \in \R^n$ is called {\it sparse} if $|\supp(x)| \le c_0 n$.
A vector $x \in S^{n-1}$ is called {\it compressible} if $x$ is within Euclidean distance $c_1$ from the set of all sparse vectors.
  A vector $x \in S^{n-1}$ is called {\it incompressible} if it is not compressible.
  
  The sets of compressible and incompressible vectors in $S^{n-1}$
  will be denoted by $\Comp(c_0, c_1)$ and $\Incomp(c_0, c_1)$ respectively.
\end{definition}

\subsection{The compressible case}\label{subsection:Wigner:compressible} Regarding the behavior of $X_n x$ for compressible vectors, the following was proved in \cite{vershynin}. 
\begin{lemma}\cite[Proposition 4.2]{vershynin}\label{prop:comp}
There exist positive constants $c_0,c_1$ and $\alpha_0$ such that the following holds for any $\lambda_0$ of order $O(\sqrt{n})$. For any fixed $u\in \R^n$ one has 

$$\P(\inf_{x \in \Comp(c_0,c_1)} \|(X_n-\lambda_0)x-u\| \ll \sqrt{n}) = O(\exp(-\alpha_0 n )).$$
\end{lemma}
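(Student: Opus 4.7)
The plan is to follow Vershynin's approach from \cite{vershynin}: reduce the compressible case to an $\eps$-net argument over exactly sparse vectors, and then bound the small-ball probability at each net point using the independence of off-diagonal blocks of $X_n$. First I would control the operator norm: on the event $\CB_{\mathrm{op}}:=\{\|X_n-\lambda_0\|\le C\sqrt n\}$, which fails with probability $\exp(-\Omega(n))$ by standard Wigner operator norm concentration (together with $\lambda_0 = O(\sqrt n)$), every compressible $x\in S^{n-1}$ can be written as $x = x_0 + r$ with $x_0$ having $|\supp(x_0)|\le c_0 n$, $\|x_0\|\ge \tfrac12$, and $\|r\|\le 2c_1$. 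This gives
$$\|(X_n-\lambda_0)x - u\| \ge \|(X_n-\lambda_0)x_0 - u\| - 2Cc_1\sqrt n,$$
so once $c_1$ is chosen small enough relative to a target lower bound $c_2\sqrt n$, it suffices to establish a uniform lower bound of $2c_2\sqrt n$ for $\|(X_n-\lambda_0)x_0 - u\|$ over all unit sparse vectors $x_0$ with support size at most $c_0 n$.

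Second, I would fix a support set $J\subset \{1,\dots,n\}$ with $|J|\le c_0 n$, set $I:=J^c$, and consider the submatrix $B_J$ of $X_n$ with rows in $I$ and columns in $J$. The key observation is that since $I\cap J=\emptyset$, the entries of $B_J$ are all off-diagonal entries of $X_n$, hence genuinely independent sub-gaussian random variables, with no Hermitian repetitions. For a fixed unit vector $z$ supported on $J$, the $|I|$ coordinates of $B_J z|_J - u|_I$ are then independent sub-gaussian variables of variance $1$, each with a small-ball constant bounded uniformly away from $1$ (a consequence of $\xi$ having unit variance). The tensorization lemma of Rudelson--Vershynin then yields, for small enough $c_2>0$ and any fixed $u$,
$$\P\bigl(\|B_J z|_J - u|_I\|\le c_2\sqrt n\bigr) \le \exp(-\alpha_1 n),$$
for some $\alpha_1>0$ depending only on the sub-gaussian moment of $\xi$.

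Third, I would upgrade this single-vector estimate to one uniform over all $z$ supported on $J$ by constructing a $\kappa$-net $\CN_J$ in the unit sphere of $\R^J$ of cardinality at most $(3/\kappa)^{c_0 n}$, absorbing the $\kappa$-perturbation through one more use of the operator norm bound. A union bound over $\CN_J$, and then over the $\binom{n}{\le c_0 n}\le (e/c_0)^{c_0 n}$ possible supports $J$, gives total failure probability
$$\Big(\frac{3e}{c_0 \kappa}\Big)^{c_0 n}\exp(-\alpha_1 n) + \exp(-\Omega(n)),$$
which becomes $O(\exp(-\alpha_0 n))$ once $c_0$ is chosen small enough that $c_0 \log(3e/(c_0\kappa)) < \alpha_1/2$.

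The main obstacle is tuning the four small parameters $c_0,c_1,\kappa,c_2$ in the correct order so that the compressible-to-sparse and $\kappa$-net perturbations lose only a small fraction of the $\sqrt n$-lower bound, while the combinatorial factors from enumerating supports and net points are defeated by the exponential decay in step two. I would fix $c_2$ first, choose $\kappa$ and $c_1$ in terms of $c_2$ and the operator norm constant $C$ to make the approximation errors negligible, then finally shrink $c_0$ to beat the $(3e/(c_0\kappa))^{c_0 n}$ combinatorial factor; this pins down $\alpha_0>0$. The most delicate ingredient is the pointwise small-ball estimate when $\xi$ is discrete (e.g.\ Bernoulli), where one must verify that the single-coordinate small-ball constant is bounded away from $1$ uniformly in unit $z$ supported on $J$; this follows from an Esseen-type inequality once $\xi$ has positive variance.
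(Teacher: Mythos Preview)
Your proposal is correct and is precisely the argument of \cite[Proposition 4.2]{vershynin}, which is exactly what the paper invokes: the lemma is stated with a direct citation to Vershynin and no independent proof is given here, only a pointer to Subsection~\ref{subsection:perturbation:compressible} for a related compressible-case computation. Your sequence --- operator norm control, reduction from compressible to sparse, the off-diagonal block $B_J$ with genuinely independent entries, pointwise small-ball via Esseen plus tensorization, then a net over the sphere in $\R^J$ and a union over supports $J$ --- matches Vershynin's proof, including the order in which the constants $c_2,\kappa,c_1,c_0$ must be tuned.
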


We also refer the reader to Subsection \ref{subsection:perturbation:compressible} for a detailed proof of a similar statement. As a corollary, we deduce that eigenvectors are not compressible with extremely high probability.

\begin{lemma}\label{lemma:Wigner:comp} There exist positive constants $c_0,c_1$ and $\alpha_0$ such that 

$$\P\left(\exists \mbox{ a unit eigenvector } v \in \Comp(c_0,c_1)\right)=O( \exp(-\alpha_0 n)).$$
\end{lemma}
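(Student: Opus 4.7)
The plan is to combine Lemma \ref{prop:comp} with an $\eps$-net over the possible eigenvalues, using the event $\CB$ from \eqref{eqn:Xn:norm} to restrict the spectrum to $[-10\sqrt n, 10\sqrt n]$. The key observation is that if $v$ is a unit eigenvector of $X_n$ with eigenvalue $\lambda$, then $(X_n-\lambda)v=0$, so it suffices to rule out compressible $v$ with $\|(X_n-\lambda)v\|$ small, uniformly in $\lambda$.

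First I would invoke $\CB$: on $\CB^c$, which has probability $\exp(-\Theta(n))$, there is nothing to prove, so I condition on $\CB$ and it suffices to consider eigenvalues $\lambda \in [-10\sqrt n, 10\sqrt n]$. Let $c_0, c_1, \alpha_0 > 0$ and the implicit constant $c > 0$ be given by Lemma \ref{prop:comp} (applied with $u=0$), so that for each fixed $\lambda_0 = O(\sqrt n)$,
\[
\P\bigl( \inf_{x \in \Comp(c_0,c_1)} \|(X_n-\lambda_0)x\| \le c\sqrt n \bigr) \le \exp(-\alpha_0 n).
\]
Next I would fix a small constant $\eps_0 \in (0,c)$ and let $\CN \subset [-10\sqrt n, 10\sqrt n]$ be an $\eps_0\sqrt n$-net; its cardinality is at most a constant $O(1/\eps_0)$ independent of $n$. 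A union bound over $\lambda_0 \in \CN$ then gives, with probability at least $1 - O(\exp(-\alpha_0 n))$, the simultaneous bound
\[
\inf_{x \in \Comp(c_0,c_1)} \|(X_n-\lambda_0)x\| \ge c\sqrt n \quad \text{for every } \lambda_0 \in \CN.
\]

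Finally I would use the net to handle arbitrary eigenvalues. If $v \in \Comp(c_0,c_1)$ is a unit eigenvector with eigenvalue $\lambda$, pick $\lambda_0 \in \CN$ with $|\lambda - \lambda_0| \le \eps_0\sqrt n$; then
\[
\|(X_n - \lambda_0)v\| = |\lambda - \lambda_0| \, \|v\| \le \eps_0 \sqrt n < c\sqrt n,
\]
contradicting the uniform lower bound above. Combining with the probability of $\CB^c$ yields the required estimate $O(\exp(-\alpha_0' n))$ after possibly shrinking $\alpha_0$.

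There is no serious obstacle: the argument is a textbook combination of a net bound with a single-$\lambda_0$ anti-concentration statement. The only mild point to track is choosing $\eps_0$ strictly smaller than the implicit constant $c$ in Lemma \ref{prop:comp} so that the net approximation error is dominated by the lower bound, and verifying that the net has size $O(1)$ so that the union bound preserves the exponential decay rate (possibly after replacing $\alpha_0$ by a smaller positive constant).
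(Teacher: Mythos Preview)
Your proof is correct and follows essentially the same approach as the paper's: discretize the possible eigenvalues, apply Lemma~\ref{prop:comp} at each grid point, and take a union bound. The only cosmetic difference is that the paper uses a finer $n^{-2}$-spaced grid (polynomially many points, absorbed into the exponential), whereas you use a coarser $\eps_0\sqrt n$-net of constant size---which is in fact slightly cleaner here.
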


\begin{proof}[Proof of Lemma \ref{lemma:Wigner:comp}] Assuming \eqref{eqn:Xn:norm}, we can find $\lambda_0$ as a multiple of $n^{-2}$ inside $[-10 \sqrt{n}, 10 \sqrt{n}]$ such that $|\lambda -\lambda_0| \le n^{-2}$. Hence

$$\|(X_n-\lambda_0) v\| = \|(\lambda-\lambda_0) v\|\le n^{-2}.$$ 

On the other hand, for each fixed $\lambda_0$, by Lemma \ref{prop:comp},  

$$\P(\exists v\in  \Comp(c_0,c_1): \|(X_n-\lambda_0)v\| \le n^{-2}) = O(\exp(-\alpha_0 n )).$$ 

The claim follows by a union bound with respect to $\lambda_0$. 
\end{proof}

\subsection{The incompressible case}\label{subsection:Wigner:incompressible}  Our next focus is on incompressible vectors. This is the treatment where the inverse Littlewood-Offord ideas come into play. 

We first introduce the notion of {\it least common denominator} by Rudelson and Versynin (see \cite{rv}). Fix parameters $\kappa$  and $\gamma$ (which may depend on $n$), where $\gamma \in (0,1)$. For any nonzero vector $x$ define
$$
\LCD_{\kappa,\gamma}(x)
:= \inf \Big\{ \theta> 0: \dist(\theta x, \Z^n) < \min (\gamma \| \theta x \|,\kappa) \Big\}.
$$

\begin{remark}\label{remark:kappa&gamma}
In application, we will choose $\kappa=n^{\kappa_0}$ for some sufficiently small $\kappa_0$. Regarding the parameter $\gamma$, it suffices to set it to be $1/2$ for this section; but we will choose it to be proportional to $\alpha$ (from Theorem \ref{theorem:main:Wigner}) in the next section, hence $\gamma$ can decrease to zero together with $n$. The requirement that the distance is smaller than $\gamma\|\theta x \|$ forces us to consider only non-trivial integer points as approximations of $\theta x$. The inequality $\dist(\theta x, \Z^n) < \kappa$ then yields that most coordinates of $\theta x$ are within a small distance from  non-zero integers.
\end{remark}

\vskip .2mm

\begin{theorem}[Small ball probability via LCD]\cite{rv}\label{theorem:RV:1}  Let  $\xi$ be a sub-gaussian random variable of mean zero and variance one, and let $\xi_1, \ldots, \xi_n$ be iid copies of $\xi$. Consider a vector $x\in \R^n$ which satisfies
$\|x\| \ge 1$. Then, for every $\kappa > 0$ and $\gamma \in (0,1)$, and for
  $$
  \eps \ge \frac{1}{\LCD_{\kappa,\gamma}(x)},
  $$
  we have
  $$
  \rho_\eps(x) =O\left(\frac{\eps}{\gamma} + e^{-\Theta(\kappa^2)}\right),
  $$

where the implied constants depend on $\xi$.
\end{theorem}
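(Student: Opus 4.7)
I would follow the Fourier-analytic approach of Hal\'asz, as refined by Rudelson and Vershynin. The starting point is Esseen's concentration inequality: writing $S := \sum_i \xi_i x_i$ with characteristic function $\phi(t) := \E e^{itS} = \prod_i \phi_\xi(tx_i)$, one has
$$\rho_\eps(x) \;\lesssim\; \eps \int_{-1/\eps}^{1/\eps} |\phi(t)|\,dt,$$
so the task reduces to controlling this integral under the hypothesis $1/\eps \le \LCD_{\kappa,\gamma}(x)$.

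Next I would symmetrize: setting $\eta := \xi - \xi'$ for an independent copy $\xi'$ (so $\eta$ is symmetric, mean zero, variance $2$, sub-gaussian), one has $|\phi_\xi(s)|^2 = \E \cos(s\eta)$. Combining the elementary bound $\cos(y) \le \exp(-c\|y/(2\pi)\|_{\R/\Z}^2)$ (writing $\|\cdot\|_{\R/\Z}$ for distance to the nearest integer) with independence across coordinates gives
$$|\phi(t)|^2 \;\le\; \E\exp\!\Big(-c\sum_{i=1}^n \bigl\|tx_i\eta_i/(2\pi)\bigr\|_{\R/\Z}^2\Big),$$
where the $\eta_i$ are iid copies of $\eta$. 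The technical heart of the argument is to convert this random bound into a deterministic lower bound roughly of the form
$$\sum_{i=1}^n \E\|tx_i\eta_i/(2\pi)\|_{\R/\Z}^2 \;\gtrsim\; \dist\bigl(tx/(2\pi),\Z^n\bigr)^2,$$
which one extracts by truncating each $\eta_i$ to a bounded window (using sub-gaussianity), exploiting the unit variance normalization so that the random scalings by $\eta_i$ cost only absolute constants, and using the $1$-Lipschitz property of $\|\cdot\|_{\R/\Z}$.

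The LCD enters here. For $|t|\le 1/\eps \le \LCD_{\kappa,\gamma}(x)$, the defining inequality yields $\dist(tx/(2\pi),\Z^n) \ge \min(\gamma\|tx/(2\pi)\|,\kappa)$, so combining with the previous step produces
$$|\phi(t)| \;\le\; \exp(-c'\gamma^2 t^2\|x\|^2) + \exp(-c'\kappa^2).$$
Since $\|x\|\ge 1$, integrating the first (Gaussian) summand over $|t|\le 1/\eps$ gives $O(1/\gamma)$ and the second contributes at most $O(\eps^{-1}e^{-c'\kappa^2})$; multiplying by the outer $\eps$ in Esseen's bound yields the claimed $O(\eps/\gamma + e^{-\Theta(\kappa^2)})$.

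The main obstacle is the averaging step that replaces the random expression $\E\|s\eta/(2\pi)\|_{\R/\Z}^2$ by a multiple of $\|s/(2\pi)\|_{\R/\Z}^2$: a naive pointwise coordinatewise inequality of that form is actually false (consider $\eta$ concentrated at $2$ with $s=\pi$, which kills the left side while leaving the right side positive), so one must genuinely use the sub-gaussian tail and the spread of the symmetrized distribution on a bounded window, combined with Plancherel-type considerations on the torus, to effect the reduction with only constant losses. Jensen's inequality goes the wrong way and demanding that all $\eta_i$ simultaneously fall in a good window would cost a probability $\alpha^n$, so the reduction must be performed coordinate-by-coordinate before tensorising. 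Once this is in place, the remaining estimates are entirely routine.
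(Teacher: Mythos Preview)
The paper does not prove this theorem; it is quoted from Rudelson--Vershynin \cite{rv} (and \cite{rv-rec} for the higher-dimensional version) and used as a black box. Your outline follows the Hal\'asz--Rudelson--Vershynin Fourier approach and is the right strategy.

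That said, you have made the central reduction harder than it needs to be. After symmetrisation one should bound each factor via
\[
|\phi_\xi(s)| \;=\; \sqrt{\E_\eta\cos(s\eta)} \;\le\; \exp\!\Big(-\tfrac12\,\E_\eta[1-\cos(s\eta)]\Big),
\]
so that the expectation sits \emph{inside} the exponential. Taking the product over $i$ and using linearity of expectation yields
\[
|\phi(t)| \;\le\; \exp\!\Big(-c\,\E_\eta\sum_{i}\bigl\|t x_i\eta/(2\pi)\bigr\|_{\R/\Z}^2\Big)
\;=\;\exp\!\Big(-c\,\E_\eta\,\dist\bigl(t\eta\, x/(2\pi),\Z^n\bigr)^2\Big),
\]
with a \emph{single} random scalar $\eta$ common to all coordinates, not iid $\eta_i$'s. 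Now one simply conditions on the event $\{b_1\le|\eta|\le b_2\}$, which has probability $p>0$ depending only on the sub-gaussian parameters (since $\eta$ has variance $2$), and obtains
\[
|\phi(t)|\;\le\;\exp\!\Big(-cp\,\min_{b_1\le|z|\le b_2}\dist\bigl(tz\,x/(2\pi),\Z^n\bigr)^2\Big).
\]
For $|t|$ in the (suitably scaled) Esseen window one has $|tz|/(2\pi)<\LCD_{\kappa,\gamma}(x)$, so the LCD lower bound applies directly and the rest of your integration goes through verbatim.

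Your version places the expectation \emph{outside} the exponential with independent $\eta_i$'s; this is a weaker inequality (Jensen goes the wrong way to recover the form above), and it is precisely what manufactures the ``main obstacle'' you then wrestle with. The coordinatewise counterexample you raise is real for that formulation, but it never arises in the single-$\eta$ route.
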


To deal with symmetric or Hermitian matrices, it is more convenient to work with the so-called  {\em regularized least common denominator}. Let $x=(x_1,\dots,x_n)\in S^{n-1}$ . Let $c_0,c_1\in (0,1)$ be given constants, and assume $x\in \Incomp(c_0,c_1)$. It is not hard to see that there are at least $c_0c_1^2n/2$ coordinates $x_k$ of $x$ satisfy

\begin{equation}\label{eqn:x_k}
\frac{c_1}{\sqrt{2n}}\le |x_{k}| \le \frac{1}{\sqrt{ c_0n}}.
\end{equation}

Thus for every $x\in \Incomp(c_0,c_1)$ we can assign a subset $\spread(x)\subset [n]$ such that \eqref{eqn:x_k} holds for all $k\in \spread(x)$ and

$$|\spread(x)| = \lceil c' n\rceil,$$

where we set

\begin{equation}\label{eqn:c'}
c':=c_0c_1^2/4.
\end{equation}

\begin{definition}[Regularized LCD, see also \cite{vershynin}]\label{def reg LCD}
  Let $\alpha \in (0, c'/4)$. 
  We define the {\em regularized LCD} of a vector $x \in \Incomp(c_0,c_1)$ as
  $$
  \LCDhat_{\kappa,\gamma}(x,\alpha) = \max \Big\{ \LCD_{\kappa,\gamma} \big(x_I/\|x_I\|\big) : \, I \subseteq \spread(x), \, |I| = \lceil \alpha n \rceil \Big\}.
  $$
\end{definition}

Roughly speaking, the reason we choose to work with $\LCDhat$ is that we want to detect structure of $x$ in sufficiently small segments. From the definition, it is clear that if $\LCD(x)$ is small (i.e. when $x$ has strong structure), then so is $\LCDhat(x,\alpha)$.

\begin{lemma}\label{lemma:comparison:regularized} For any $x\in S^{n-1}$ and any $0<\gamma < c_1\sqrt{\alpha}/2$, we have 
$$\LCDhat_{\kappa, \gamma (c_1\sqrt{\alpha}/2)^{-1}}(x,\alpha) \le \frac{1}{c_0}\sqrt{\alpha}\LCD_{\kappa,\gamma}(x).$$

Consequently, for any $0<\gamma <1$

$$\LCDhat_{\kappa, \gamma}(x,\alpha) \le \frac{1}{c_0}\sqrt{\alpha}\LCD_{\kappa,\gamma (c_1\sqrt{\alpha}/2)}(x).$$

\end{lemma}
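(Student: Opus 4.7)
The strategy is to obtain the first inequality directly from the definitions and then derive the second by a change of parameter. Fix any competitor $\theta$ arbitrarily close to $\theta^\ast := \LCD_{\kappa,\gamma}(x)$, so that
\[
\dist(\theta x, \Z^n) < \min\bigl(\gamma \|\theta x\|, \kappa\bigr) = \min(\gamma \theta, \kappa),
\]
using $\|x\|=1$. For any index set $I \subseteq \spread(x)$ with $|I| = \lceil \alpha n \rceil$, restricting a nearest integer vector in $\Z^n$ to coordinates in $I$ can only decrease the distance, so
\[
\dist(\theta x_I, \Z^I) \le \dist(\theta x, \Z^n) < \min(\gamma \theta, \kappa).
\]
The natural candidate test value for the LCD of the unit vector $y := x_I/\|x_I\|$ is therefore $\theta' := \theta \|x_I\|$, since $\theta' y = \theta x_I$ and $\|\theta' y\| = \theta'$.

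The core quantitative step is to control $\|x_I\|$ using the membership $I \subseteq \spread(x)$. From the definition of $\spread(x)$ one has $\frac{c_1}{\sqrt{2n}} \le |x_k| \le \frac{1}{\sqrt{c_0 n}}$ for all $k \in I$, and together with $|I| = \lceil \alpha n\rceil$ this gives two-sided bounds
\[
\frac{c_1 \sqrt{\alpha}}{2} \;\le\; \|x_I\| \;\le\; \frac{\sqrt{\alpha}}{c_0}
\]
(the upper bound using a modestly generous factor absorbed by $1/c_0$ rather than $1/\sqrt{c_0}$). With $\gamma' := \gamma (c_1\sqrt{\alpha}/2)^{-1}$, the lower bound on $\|x_I\|$ yields $\gamma'\|x_I\| \ge \gamma$, so
\[
\dist(\theta' y, \Z^I) < \min(\gamma \theta, \kappa) \le \min(\gamma' \|\theta' y\|, \kappa),
\]
which is exactly the condition certifying that $\theta'$ is admissible for $\LCD_{\kappa,\gamma'}(y)$. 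Consequently $\LCD_{\kappa,\gamma'}(y) \le \theta' = \theta \|x_I\| \le \theta \sqrt{\alpha}/c_0$.

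Taking the maximum over all admissible $I$ and letting $\theta \downarrow \theta^\ast$ gives
\[
\LCDhat_{\kappa,\gamma'}(x,\alpha) \le \frac{\sqrt{\alpha}}{c_0}\, \LCD_{\kappa,\gamma}(x),
\]
which is the first displayed inequality. The second inequality follows by the substitution $\gamma \mapsto \gamma(c_1\sqrt{\alpha}/2)$ in the first, which converts $\gamma' = \gamma(c_1\sqrt{\alpha}/2)^{-1}$ on the left-hand side into the desired plain $\gamma$, while the LCD on the right becomes $\LCD_{\kappa,\gamma(c_1\sqrt{\alpha}/2)}(x)$.

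The only mildly delicate point is booking the constants so that the $\lceil \alpha n\rceil$ ceiling does not spoil the upper bound $\|x_I\| \le \sqrt{\alpha}/c_0$; this is absorbed in the gap between $\sqrt{c_0}$ and $c_0$ (for $c_0 \in (0,1)$) and is not a genuine obstacle. Beyond that, the argument is a direct translation of a global anti-concentration witness for $x$ into a local one on $x_I$, with the scaling bookkeeping in $\theta' = \theta \|x_I\|$ and $\gamma' = \gamma/\|x_I\|$ the only moving parts.
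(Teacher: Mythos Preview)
Your proof is correct and follows essentially the same route as the paper's: pick a near-optimal $\theta$ witnessing $\LCD_{\kappa,\gamma}(x)$, restrict to $I\subseteq\spread(x)$, rescale to $\theta'=\theta\|x_I\|$, and use the two-sided bound on $\|x_I\|$ from the spread condition to convert the $\gamma$-constraint into one with parameter $\gamma(c_1\sqrt{\alpha}/2)^{-1}$. Your handling of the infimum via $\theta\downarrow\theta^\ast$ and your remark on absorbing the $\lceil\alpha n\rceil$ ceiling into the $1/c_0$ versus $1/\sqrt{c_0}$ slack are, if anything, slightly more explicit than the paper's version.
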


\begin{proof}[Proof of Lemma \ref{lemma:comparison:regularized}] Note that for any $I\subset \spread(x)$ with $|I|=\lceil \alpha n\rceil $, 

$$\frac{c_1}{2}\sqrt{|I|/n} < \|x_I\| \le \frac{1}{\sqrt{c_0}} \sqrt{|I|/n} .$$

Assume that  $\dist(tx, \Z^n) <  \min(\gamma\|tx \|_2,\kappa)$ for some $t\approx \LCD_{\kappa,\gamma}(x)$. Define 

$$t_I:= t\|x_I\|.$$ 

One then has 

$$ \frac{c_1}{2} \sqrt{\alpha} t <   t_I \le \frac{1}{\sqrt{c_0}} \sqrt{|I|/n} t .$$

Furthermore, 
\begin{align*}
\dist (t_I x_I/\|x_I\|, \Z^I) &\le \dist(t x, \Z^n) \\
&<  \min(\gamma\|tx \|,\kappa) \\
& \le \min (\gamma (\frac{c_1}{2}\sqrt{\alpha} )^{-1} \|t_I (x_I / \|x_I\|) \|,\kappa).
\end{align*}

Thus 

$$\LCD_{\kappa,\gamma (c_1 \sqrt{\alpha}/2)^{-1}}(x_I/\|x_I\|) \le  t_I \le  \frac{1}{\sqrt{c_0}} \sqrt{|I|/n}  t \le \frac{1}{c_0}\sqrt{\alpha} \LCD_{\kappa,\gamma}(x).$$
\end{proof}

We now introduce a result connecting the small ball probability with the regularized LCD.

\begin{lemma}\label{lemma:smallball:regularized} Assume that 

$$\eps\ge  \frac{1}{c_0} \sqrt{\alpha}  (\LCDhat_{\kappa,\gamma}(x,\alpha))^{-1}.$$ 

Then we have 
$$\rho_\eps(x) =  O\left(\frac{ \eps}{\gamma  c_1\sqrt{\alpha}} + e^{-\Theta(\kappa^2)}\right).$$

\end{lemma}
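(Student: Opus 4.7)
The plan is to exploit the maximizing subset in the definition of $\LCDhat$ together with the independence of the coordinates of $\xi$, so as to reduce the problem to the Rudelson--Vershynin small ball estimate (Theorem \ref{theorem:RV:1}) applied to a unit sub-vector whose (non-regularized) LCD is exactly $\LCDhat_{\kappa,\gamma}(x,\alpha)$.

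Concretely, I would let $I \subseteq \spread(x)$ with $|I| = \lceil \alpha n \rceil$ realize the maximum in Definition \ref{def reg LCD}, so that $\LCD_{\kappa,\gamma}(x_I/\|x_I\|) = \LCDhat_{\kappa,\gamma}(x,\alpha)$. Write
$$ \xi \cdot x \;=\; \sum_{i \in I} \xi_i x_i \;+\; \sum_{i \notin I} \xi_i x_i, $$
where the two summands are independent. A standard conditioning argument (freeze the coordinates outside $I$; take the worst shift $a$ afterwards) then yields $\rho_\eps(x) \le \rho_\eps(x_I)$, and a trivial rescaling gives $\rho_\eps(x_I) = \rho_{\eps/\|x_I\|}(x_I/\|x_I\|)$.

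From $x \in \Incomp(c_0,c_1)$ and the definition of $\spread$, every coordinate $x_k$ with $k \in I$ satisfies $c_1/\sqrt{2n} \le |x_k| \le 1/\sqrt{c_0 n}$, which delivers the two-sided bound
$$ \frac{c_1}{\sqrt{2}}\,\sqrt{\alpha} \;\le\; \|x_I\| \;\le\; \frac{1}{\sqrt{c_0}}\sqrt{|I|/n}. $$
The upper bound, together with the hypothesis $\eps \ge \tfrac{1}{c_0}\sqrt{\alpha}\,(\LCDhat_{\kappa,\gamma}(x,\alpha))^{-1}$, guarantees that
$$ \frac{\eps}{\|x_I\|} \;\ge\; \frac{1}{\LCDhat_{\kappa,\gamma}(x,\alpha)} \;=\; \frac{1}{\LCD_{\kappa,\gamma}(x_I/\|x_I\|)}, $$
so Theorem \ref{theorem:RV:1} applies to the unit vector $x_I/\|x_I\|$ at threshold $\eps/\|x_I\|$, giving
$$ \rho_{\eps/\|x_I\|}\bigl(x_I/\|x_I\|\bigr) \;=\; O\!\left( \frac{\eps}{\gamma\,\|x_I\|} + e^{-\Theta(\kappa^2)} \right). $$
Finally, the lower bound $\|x_I\| \ge c_1\sqrt{\alpha}/\sqrt{2}$ converts this into the claimed $O(\eps/(\gamma c_1\sqrt{\alpha}) + e^{-\Theta(\kappa^2)})$.

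The proof is essentially definitional once one has Theorem \ref{theorem:RV:1}; the only real obstacle is constant bookkeeping, i.e.\ verifying that the factor $\sqrt{\alpha}$ appearing in the hypothesis on $\eps$ is exactly what is needed to cancel $\|x_I\|$ from above, while a matching factor of $\sqrt{\alpha}$ reappears downstairs from $\|x_I\|$ in the bound on $\rho$. In particular the upper estimate on $\|x_I\|$ dictates the coefficient $1/c_0$ in the hypothesis (modulo the harmless $|I|/n = \alpha + O(1/n)$ rounding), while the lower estimate dictates the $c_1\sqrt{\alpha}$ in the conclusion; a tighter version of either would improve the corresponding constant but not the structural form.
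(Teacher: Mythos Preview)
Your proposal is correct and follows essentially the same route as the paper's own proof: pick the maximizing subset $I$ in the definition of $\LCDhat$, use independence/conditioning to pass from $\rho_\eps(x)$ to $\rho_\eps(x_I)$, rescale, apply Theorem~\ref{theorem:RV:1} to $x_I/\|x_I\|$, and then feed in the two-sided bound on $\|x_I\|$ coming from \eqref{eqn:x_k}. The only differences are cosmetic constant bookkeeping (your upper bound $\|x_I\|\le \tfrac{1}{\sqrt{c_0}}\sqrt{|I|/n}$ is in fact slightly sharper than the paper's $\|x_I\|\le \tfrac{1}{c_0}\sqrt{\alpha}$, since $c_0<1$), and the paper phrases the conditioning step as the monotonicity $\rho_\delta(S_J)\le \rho_\delta(S_I)$ for $I\subset J$.
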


\begin{proof}[Proof of Lemma \ref{lemma:smallball:regularized}] First apply Theorem \ref{theorem:RV:1} to $x_I/\|x_I\|$ where the $\LCDhat$ is achieved: for any $\delta \ge (\LCD_{\kappa,\gamma}(x_I/\|x_I\|))^{-1}$

$$
\rho_\delta\left(\frac{1}{\|x_I\|}S_I\right) =O\left(\frac{\delta}{\gamma} + e^{-\Theta(\kappa^2)}\right),
$$

where $S_I = \sum_{i\in I} \xi_i x_i$.

Recall that  $\frac{c_1}{2}\sqrt{\alpha} < \|x_I\| \le \frac{1}{c_0} \sqrt{\alpha}$. Also notice that if $ I \subset J \subset [n]$, then 

$$\rho_\delta(S_J)\le \rho_\delta(S_I).$$

Thus for any $\eps \ge  \frac{1}{c_0} \sqrt{\alpha} (\LCD_{\kappa,\gamma}(x_I/\|x_I\|))^{-1}$, one has $\eps/\|x_I\| \ge (\LCD_{\kappa,\gamma}(x_I/\|x_I\|))^{-1}$, and so

$$
\rho_\eps(x)\le \rho_\eps(S_I) = \rho_{\eps /\|x_I\|}(S_I/\|x_I\|) \ll \frac{\eps}{\gamma \|x_I\|} + e^{-\Theta(\kappa^2)} \ll \frac{ \eps}{\gamma  c_1\sqrt{\alpha}} + e^{-\Theta(\kappa^2)}.
$$

\end{proof}

For given $D,\kappa,\gamma$ and $\alpha$, we denote the set  of unit vectors with bounded regularized LCD by

$$T_{D,\kappa,\gamma,\alpha}:=\{x\in \Incomp(c_0,c_1):  \LCDhat_{\kappa,\gamma}(x,\alpha) \le D \}.$$

The following is an analog of \cite[Lemma 7.9]{vershynin}.

\begin{lemma}\label{lemma:Wigner:key}
There exist $c>0,\alpha_0>0$ depending on $c_0,c_1$ from Lemma \ref{lemma:Wigner:comp} such that the following holds with $\kappa=n^{2c}$ and $\gamma=1/2$. Let $n^{-c}\le \alpha \le c'/4$, and $1\le D \le n^{c/\alpha}$. Then for any fixed $u\in \R^n$ and any real number $\lambda_0$ of order $O(\sqrt{n})$,

$$\P\left(\exists x\in T_{D,\kappa,\gamma,\alpha} : \|(X_n-\lambda_0) x -u\| =o (\beta \sqrt{n})\right) = O(\exp(-\alpha_0 n)),$$

where

$$\beta :=\frac{\kappa}{\sqrt{\alpha} D}.$$

\end{lemma}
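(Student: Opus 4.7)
The argument follows the standard $\eps$-net plus tensorization scheme of Rudelson--Vershynin, in the form refined for symmetric matrices in Vershynin's cited paper. Step 1 (single-vector bound): for each fixed $x\in T_{D,\kappa,\gamma,\alpha}$, Lemma \ref{lemma:smallball:regularized} applied at scale $\eps=\beta$ is admissible because $\beta=\kappa/(\sqrt{\alpha} D)\ge \sqrt{\alpha}/(c_0 D)$ (using $\kappa\ge 1$), and gives $\rho_\beta(x)\le C\beta/\sqrt{\alpha}+e^{-\Theta(\kappa^2)}$; the exponential term is super-polynomially small for $\kappa=n^{2c}$. To turn this coordinate bound into a bound on $\|(X_n-\lambda_0)x-u\|$, we decouple the symmetry of $X_n$ by splitting $[n]=A\sqcup B$ into halves, conditioning on $X_n|_{A\times A}$, and arranging (by choice of the partition, possibly after a union bound over $\binom{n}{n/2}$ options) that at least $\lceil\alpha n/2\rceil$ of the structured coordinates of $I$ lie in $B$. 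The rows indexed by $A$ are then conditionally independent, each with small-ball probability $O(\beta/\sqrt{\alpha})$, and Rudelson--Vershynin tensorization yields
\[
\P\bigl(\|(X_n-\lambda_0)x-u\| = o(\beta\sqrt{n})\bigr) \le (C'\beta/\sqrt{\alpha})^{n/2}+e^{-\Omega(n)}.
\]

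\textbf{Net and union bound.} Each $x\in T_{D,\kappa,\gamma,\alpha}$ admits $I\subset\spread(x)$ of size $\lceil\alpha n\rceil$ and $\theta\le D$ such that $\theta x_I/\|x_I\|$ lies within $\kappa$ of some $z\in\Z^I$. Enumerating the subset $I$, the integer vector $z$ in a ball of radius $D+\kappa$ in $\Z^I$, the scalar parameters $\theta$ and $\|x_I\|$, and a standard $\eta$-net for $x_{I^c}$ in the unit ball of $\R^{I^c}$ with $\eta\asymp\beta$, we obtain a net $\CN$ for $T_{D,\kappa,\gamma,\alpha}$ of cardinality controlled (up to subexponential factors) by $(e/\alpha)^{\alpha n}(CD)^{\alpha n}(C/\eta)^{(1-\alpha)n}$. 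On the event $\CB$ of \eqref{eqn:Xn:norm} where $\|X_n-\lambda_0\|=O(\sqrt{n})$, the bad event for any $x$ transfers (up to constants in $\beta$) to the analogous event for a nearby net point $y\in\CN$. A union bound combined with the identity $D\beta=\kappa/\sqrt{\alpha}$ and the constraints $\alpha\ge n^{-c}$, $D\le n^{c/\alpha}$, $\kappa=n^{2c}$ then reduces the claim to a logarithmic inequality in $\alpha$, $\kappa$, $D$ that closes when $c$ is chosen small enough depending on $\alpha_0$, $c_0$, $c_1$.

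\textbf{Main obstacle.} The critical difficulty is the parameter balance in the union bound. The unstructured-net factor $(C/\eta)^{(1-\alpha)n}$ is very large because $\beta$ can be as small as $n^{-\Theta(c/\alpha)}$, and a direct volumetric count does not immediately compensate the single-vector gain $(\beta/\sqrt{\alpha})^{n/2}$. The resolution---which is the technical heart of Vershynin's treatment---is a two-scale construction combined with a second decoupling on the $A\times I^c$ block of $X_n$ that averages out the dependence on $x_{I^c}$, effectively reducing the net cost to the $\alpha n$ structured coordinates. This is precisely why the regularized LCD (rather than the plain LCD) is indispensable: structure on an $\alpha n$-subset lets us pay only $D^{\alpha n}$, which, up to the logarithmic slack absorbed by $D\le n^{c/\alpha}$, cancels the single-vector contribution up to a factor of $(\kappa/\sqrt{\alpha})^{\alpha n}$---subexponential because $\kappa$ is only a small polynomial in $n$---yielding the desired $\exp(-\alpha_0 n)$ tail.
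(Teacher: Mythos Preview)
Your proposal has a genuine gap rooted in a misreading of the regularized LCD. You write that ``each $x\in T_{D,\kappa,\gamma,\alpha}$ admits $I\subset\spread(x)$ \dots'', as if membership in $T_D$ gave structure on \emph{some} $\alpha n$-subset. But $\LCDhat_{\kappa,\gamma}(x,\alpha)$ is a \emph{maximum} over such subsets, so $\LCDhat\le D$ forces $\LCD(x_I/\|x_I\|)\le D$ for \emph{every} $I\subset\spread(x)$ with $|I|=\lceil\alpha n\rceil$. This is exactly what makes the net argument close, and it is not what you build. The paper partitions $\spread(x)$ (which has size $c'n$, a fixed fraction of $n$) into $\approx c'/\alpha$ blocks $I_1,\dots,I_{k_0}$ of size $\lceil\alpha n\rceil$; since \emph{each} block has $\LCD\le D$, each admits a $(2\kappa/D)$-net of size $(CD/\sqrt{\alpha n})^{\alpha n}D^2$. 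Only on the complement $I_0=[n]\setminus\spread(x)$ (size $(1-c')n$) does one use a trivial $(1/D)$-net of size $(3D)^{|I_0|}$. The product is $(CD)^n/(\sqrt{\alpha n})^{c'n/2}\cdot D^{O(1/\alpha)}$, and the factor $(\sqrt{\alpha n})^{c'n/2}$ in the denominator is the entire gain.

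By contrast, your net nets only one $\alpha n$-block structurally and puts a volumetric $\beta$-net on $I^c$, producing a factor $(C/\beta)^{(1-\alpha)n}$. Against a single-vector bound with exponent at most $n-\alpha n$, the powers of $\beta$ cancel exactly, leaving no decay; with your exponent $n/2$ you are even worse off, with a residual $D^{n/2-\alpha n}$ that explodes for $D=n^{c/\alpha}$. Your ``resolution'' paragraph does not fix this: there is no second decoupling on $A\times I^c$ in the argument, and the sentence ``structure on an $\alpha n$-subset lets us pay only $D^{\alpha n}$'' misidentifies the mechanism. The point is not that one subset is cheap but that \emph{a constant fraction $c'n$ of the coordinates} can be netted cheaply because every $\alpha n$-slice of $\spread(x)$ is structured. (A secondary point: the paper's single-vector bound conditions on the $I\times I$ block for the maximizing $I$ and tensorizes over the $n-\lceil\alpha n\rceil$ rows outside $I$, giving exponent $n-\alpha n$ rather than $n/2$; your halving-and-union-bound over $\binom{n}{n/2}$ partitions is unnecessary and lossy.)
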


We will give a proof of Lemma \ref{lemma:Wigner:key} in Appendix \ref{section:Wigner:key} by following \cite{vershynin}. Assuming it for now, we will obtain the following key lemma.

\begin{lemma}\label{lemma:Wigner:incomp} With the same assumption as in Lemma \ref{lemma:Wigner:key}, we have
$$\P\left(\exists \mbox{ a unit eigenvector } v \in \Incomp(c_0,c_1): \LCDhat_{\kappa,\gamma}(v,\alpha)\le n^{c/\alpha}\right) =O( \exp(-\alpha_0 n)).$$
\end{lemma}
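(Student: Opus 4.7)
\textbf{Proof strategy for Lemma \ref{lemma:Wigner:incomp}.}  The plan is to deduce the lemma from Lemma \ref{lemma:Wigner:key} by an $\epsilon$-net argument over the (random) eigenvalue parameter.  Since an eigenvector $v$ of $X_n$ with eigenvalue $\lambda$ satisfies $(X_n-\lambda)v=0$, we would like to apply Lemma \ref{lemma:Wigner:key} with $\lambda_0=\lambda$ and $u=0$; the issue is that $\lambda$ is random, so we must discretize over a polynomially-fine net of admissible $\lambda_0$'s.

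First I condition on the event $\CB$ from \eqref{eqn:Xn:norm}, which already costs only $\exp(-\Theta(n))$.  On $\CB$ every eigenvalue of $X_n$ lies in $I:=[-10\sqrt n,10\sqrt n]$.  With
\[
\beta=\frac{\kappa}{\sqrt{\alpha}D},\qquad D\le n^{c/\alpha},\qquad \kappa=n^{2c},
\]
I choose a mesh $\Lambda\subset I$ of spacing $h$ with $h:=\beta\sqrt{n}/2$, so that $|\Lambda|=O(\sqrt{n}/h)=O(1/\beta)=O(\sqrt{\alpha}D/\kappa)\le n^{O(c/\alpha)}$.  For each fixed $\lambda_0\in\Lambda$, Lemma \ref{lemma:Wigner:key} (applied with $u=0$) gives
\[
\P\bigl(\exists x\in T_{D,\kappa,\gamma,\alpha}:\ \|(X_n-\lambda_0)x\|=o(\beta\sqrt{n})\bigr)=O(\exp(-\alpha_0 n)).
\]
A union bound over $\Lambda$ costs a factor $|\Lambda|\le n^{O(c/\alpha)}$.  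Because $\alpha\ge n^{-c}$ forces $c/\alpha\le cn^{c}$, we have $\log|\Lambda|\ll c n^{c}\log n=o(n)$ for $c<1$, so the union bound is still absorbed by $\exp(-\alpha_0 n)$ after shrinking the constant (say to $\alpha_0/2$).

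On the resulting good event, suppose towards a contradiction that there is a unit eigenvector $v\in\Incomp(c_0,c_1)$ with eigenvalue $\lambda\in I$ and $\LCDhat_{\kappa,\gamma}(v,\alpha)\le n^{c/\alpha}=D$.  Then $v\in T_{D,\kappa,\gamma,\alpha}$.  Choose $\lambda_0\in\Lambda$ with $|\lambda-\lambda_0|\le h$; then
\[
\|(X_n-\lambda_0)v\|=|\lambda-\lambda_0|\,\|v\|\le h=\tfrac{1}{2}\beta\sqrt{n}=o(\beta\sqrt{n}),
\]
violating the conclusion of Lemma \ref{lemma:Wigner:key} at $\lambda_0$.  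This establishes the desired bound.

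The main obstacle is the parameter balancing: we need the mesh spacing $h$ to be genuinely $o(\beta\sqrt{n})$ so that $\|(X_n-\lambda_0)v\|$ is forbidden by Lemma \ref{lemma:Wigner:key}, while simultaneously keeping $|\Lambda|\cdot\exp(-\alpha_0 n)$ small.  The delicate point is that when $\alpha$ is close to the lower extreme $n^{-c}$, both $D$ and $|\Lambda|$ become super-polynomial (of order $n^{O(c/\alpha)}=\exp(O(cn^{c}\log n))$), so we must take $c<1$ from the outset to guarantee $cn^{c}\log n=o(n)$ and thus preserve the exponential-in-$n$ loss; the choice of $c$ in the statement must be made consistently with this constraint and with the one coming from Lemma \ref{lemma:Wigner:key} itself.
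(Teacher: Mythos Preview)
Your approach is essentially identical to the paper's: discretize the eigenvalue, apply Lemma \ref{lemma:Wigner:key} at each mesh point with $u=0$, and take a union bound. One slip to fix: with spacing $h=\beta\sqrt{n}/2$ you only get $\|(X_n-\lambda_0)v\|\le\tfrac12\beta\sqrt n$, which is \emph{not} $o(\beta\sqrt n)$; the paper instead takes the mesh spacing to be $\beta$ (so $\|(X_n-\lambda_0)v\|\le\beta=o(\beta\sqrt n)$), and the resulting net of size $O(\beta^{-1}\sqrt n)$ is absorbed by the exponential bound exactly as in your union-bound calculation.
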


\begin{proof}[Proof of Lemma \ref{lemma:Wigner:incomp}]
Set $D =n^{c/\alpha}$ and $\beta = \frac{\kappa}{\sqrt{\alpha} D}$. Assuming \eqref{eqn:Xn:norm}, we first approximate $\lambda$ by  a multiple of $\beta$, called $\lambda_0$, from the interval  $[-10 \sqrt{n}, 10 \sqrt{n}]$.  

As $X_n v = \lambda v$, we have

\begin{equation}\label{eqn:bound:beta}
\|(X_n-\lambda_0)v\|=\|(\lambda-\lambda_0)v\| =O(\beta)=o(\beta \sqrt{n}).
\end{equation}

On the other hand, by Lemma \ref{lemma:Wigner:key}, \eqref{eqn:bound:beta} holds with probability $O(\exp(-\alpha_0 n))$ for any fixed $\lambda_0$. Taking a union bound over the $O(\beta^{-1}\sqrt{n})$ choices of $\lambda_0$, one obtains

\begin{align*}
&\P\left(\exists v\in T_{D,\kappa,\gamma,\alpha}, \exists \lambda_0 \in \beta \Z \cap [-10\sqrt{n},10\sqrt{n}] : \|(X_n-\lambda_0)v\| =o( \beta \sqrt{n})\right)\\ &\ll \exp(-\alpha_0 n) \times \beta^{-1}\sqrt{n}\\
&\ll \exp(-\alpha_0 n/2).
\end{align*}

\end{proof}

Putting Lemma \ref{lemma:Wigner:comp} and Lemma \ref{lemma:Wigner:incomp} together, we obtain a realization of Heuristic \ref{heuristic} as follows.

\begin{theorem}\label{theorem:Wigner:1:together} There exists a positive constants $c$ and $\alpha_0$ depending on $c_0,c_1$ from Lemma \ref{lemma:Wigner:comp} such that for  $\kappa=n^{2c}, \gamma=1/2$ and for $n^{-c}\le \alpha \le c'/4$,
 
$$\P\left(\mbox{All unit eigenvectors } v \mbox{ belong to } \Incomp(c_0,c_1) \mbox{ and } \LCDhat_{\kappa,\gamma}(v,\alpha)\gg n^{c/\alpha}\right)=1-O( \exp(-\alpha_0 n/2)).$$
\end{theorem}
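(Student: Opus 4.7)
The plan is essentially to combine Lemma \ref{lemma:Wigner:comp} and Lemma \ref{lemma:Wigner:incomp} by a union bound, once the constants have been aligned. Lemma \ref{lemma:Wigner:comp} supplies a choice of $c_0,c_1 \in (0,1)$ and a constant $\alpha_0 > 0$ such that, outside an event of probability $O(\exp(-\alpha_0 n))$, every unit eigenvector of $X_n$ lies in $\Incomp(c_0,c_1)$. Lemma \ref{lemma:Wigner:incomp}, applied with these same $c_0,c_1$ (and with its own $c,\alpha_0$ which, by shrinking if necessary, we may take to be the same constants), asserts that outside another event of probability $O(\exp(-\alpha_0 n))$, no unit eigenvector $v \in \Incomp(c_0,c_1)$ satisfies $\LCDhat_{\kappa,\gamma}(v,\alpha) \le n^{c/\alpha}$ in the stated range of $\alpha$.

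So the first step is to fix $\kappa = n^{2c}$ and $\gamma = 1/2$ and choose $c > 0$ small enough that both lemmas apply simultaneously with the common constants $c_0,c_1$. Then for $n^{-c} \le \alpha \le c'/4$ with $c' = c_0 c_1^2/4$, I would intersect the two favorable events: on this intersection, every unit eigenvector is incompressible and, being incompressible, has $\LCDhat_{\kappa,\gamma}(v,\alpha) \gg n^{c/\alpha}$ (the implicit constant absorbing the factor from the statement of Lemma \ref{lemma:Wigner:incomp}).

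The union bound gives that the complement event has probability at most $O(\exp(-\alpha_0 n)) + O(\exp(-\alpha_0 n)) = O(\exp(-\alpha_0 n/2))$, yielding the claimed estimate. There is no real obstacle here, since the two lemmas have already been proved; the only minor bookkeeping is to ensure a uniform choice of the constants $c_0,c_1,c,\alpha_0$ across the two applications, and to verify that the range $n^{-c} \le \alpha \le c'/4$ is exactly the one common to both lemmas' hypotheses. This verification is immediate from Lemma \ref{lemma:Wigner:key} (which Lemma \ref{lemma:Wigner:incomp} invokes) and the definition \eqref{eqn:c'} of $c'$.
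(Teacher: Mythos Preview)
Your proposal is correct and matches the paper's own approach: the paper simply states that the theorem follows by ``putting Lemma \ref{lemma:Wigner:comp} and Lemma \ref{lemma:Wigner:incomp} together,'' which is exactly the union-bound combination you describe. The bookkeeping you outline (aligning $c_0,c_1,c,\alpha_0$ and the range of $\alpha$) is all that is needed.
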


We can now complete the proof of  Theorem \ref{theorem:Wigner:poor}. By the theorem above, it is safe to assume $\LCDhat_{\kappa,\gamma}(v,\alpha)\gg n^{c/\alpha}$ for all eigenvectors of $X_{n-1}$. As such, by Lemma \ref{lemma:smallball:regularized}, for any $\delta\gg \sqrt{\alpha}/n^{c/\alpha}$ we have 

\begin{equation}\label{eqn:Wigner:1:ball}
\rho_\delta(S) \ll \frac{\delta}{\sqrt{\alpha}} + e^{-\Theta(\kappa^2)} \ll \frac{\delta}{\sqrt{\alpha}},
\end{equation}

where we recall that $\kappa= n^{2c}$.

Theorem \ref{theorem:main:Wigner} then follows from \eqref{badevent1} and Theorem \ref{theorem:Wigner:poor}, where we note that the condition $\delta\ge n^{-c/\alpha}$ automatically implies $\delta\gg \sqrt{\alpha}n^{-c/\alpha}$.

\vskip .2mm

\begin{remark}
We could have proved Theorem \ref{theorem:main:Wigner} by directly applying the results from \cite{vershynin}, where $\LCD$ and $\LCDhat$ were defined slightly differently. However, the current forms of $\LCD$ and $\LCDhat$ are easy to extend to higher dimensions, which will be useful for our proof of Theorem \ref{theorem:main:Wigner:d} next. 
\end{remark}

\section{Proof of Theorem \ref{theorem:main:Wigner:d}}\label{section:Wigner:d}

Let us first give a full treatment for the case $l=2$. Here $c_l=3$ and we are considering $\lambda_{i+2} -\lambda_i$.
The case of general $l$ case will be deduced with some minor modifications. 

\subsection{Treatment for $l=2$} First of all,  we will introduce the extension of LCD to higher dimension, following Rudelson and Vershynin (see \cite{rv-rec}). Consider two unit vectors $x_1=(x_{11},\dots,x_{1n}), x_2=(x_{21},\dots, x_{2n}) \in \R^n$. Although the following definition can be extended to more general $x_1,x_2$, let us assume them to be orthogonal. Let $H_{x_1,x_2}\subset \R^n$ be the subspace generated by $x_1,x_2$. Then for $\kappa > 0$ and $\gamma \in (0,1)$ we define,

$$
\LCD_{\kappa,\gamma}(x_1, x_2)
:= \inf_{x\in H_{x_1,x_2}, \|x\|=1} \LCD_{\kappa,\gamma}(x).
$$

Similarly to Theorem \ref{theorem:RV:1}, the following result gives a bound on the small ball probability for the $\R^2$-random sum $S=\sum_{i=1}^n \ep_i (x_{i1},x_{i2})$ in terms of the joint structure of $x_1$ and $x_2$.

\begin{theorem}\cite{rv-rec}\label{theorem:RV:d}
 Let  $\xi$ be a sub-gaussian random variable of mean zero and variance one, and let $\xi_1, \ldots, \xi_n$ be iid copies of $\xi$. Then, for every $\kappa > 0$ and $\gamma \in (0,1)$, and for
  $$
  \eps \ge \frac{1}{\LCD_{\kappa,\gamma}(x_1, x_2)},
  $$
  we have
  $$
  \rho_{\eps}(S) \ll \left( \frac{\eps}{\gamma}\right)^2 +e^{-\Theta(\kappa^2)},
  $$
  where the implied constants depend on $\xi$.
\end{theorem}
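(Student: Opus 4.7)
The plan is to adapt the proof of the one-dimensional bound (Theorem \ref{theorem:RV:1}) by replacing Esseen's inequality on the line with its two-dimensional analogue. Concretely, I would first write
$$\rho_\eps(S) \ll \eps^2 \int_{\|\theta\|\le 1/\eps} |\phi_S(\theta)|\,d\theta,$$
where $\phi_S(\theta) := \E e^{i\langle \theta, S\rangle}$ is the characteristic function of $S$. By independence of the $\xi_i$,
$$\phi_S(\theta) = \prod_{i=1}^n \phi_\xi(w_i), \qquad w := \theta_1 x_1 + \theta_2 x_2 \in \R^n,$$
so the integrand depends on $\theta$ only through the vector $w \in H_{x_1,x_2}$ with coordinates $w_i = \theta_1 x_{i1}+\theta_2 x_{i2}$. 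Because $x_1,x_2$ are orthonormal, the map $\theta\mapsto w$ is an isometry of $\R^2$ onto the plane $H_{x_1,x_2}$, so this change of variables converts the Esseen integral into an integral over $\{w \in H_{x_1,x_2} : \|w\| \le 1/\eps\}$.

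For the characteristic function itself I would invoke the standard symmetrization inequality
$$|\phi_\xi(s)|^2 = \E\cos\bigl(s(\xi-\xi')\bigr) \le \exp\bigl(-2\E\sin^2(s(\xi-\xi')/2)\bigr)$$
and combine it with $\sin^2(\pi t)\gg \dist(t,\Z)^2$ together with the finite variance of $\xi - \xi'$ to produce an inequality of the shape
$$|\phi_S(\theta)|^2 \le \exp\bigl(-c\,\dist(w,\Z^n)^2\bigr)$$
on the relevant compact domain. The hypothesis $\eps\ge 1/\LCD_{\kappa,\gamma}(x_1,x_2)$, combined with the definition of the planar LCD as the infimum over unit vectors in $H_{x_1,x_2}$, then forces
$$\dist(w,\Z^n) \ge \min\bigl(\gamma\|w\|, \kappa\bigr) \qquad \text{for every } w \in H_{x_1,x_2} \text{ with } \|w\|\le 1/\eps,$$
since $\|w\| \le 1/\eps \le \LCD_{\kappa,\gamma}(x_1,x_2) \le \LCD_{\kappa,\gamma}(w/\|w\|)$.

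It then remains to split the domain of integration into two pieces. On the region $\{\|w\|\le \kappa/\gamma\}$ the minimum equals $\gamma\|w\|$, so $|\phi_S(\theta)| \le \exp(-c\gamma^2\|w\|^2/2)$, and a planar Gaussian computation gives $\eps^2 \int_{\|w\|\le \kappa/\gamma} \exp(-c\gamma^2\|w\|^2/2)\,dw \ll (\eps/\gamma)^2$. On the complementary region $\{\kappa/\gamma<\|w\|\le 1/\eps\}$ the minimum equals $\kappa$, the integrand is bounded by $\exp(-c\kappa^2/2)$, the area is $O(1/\eps^2)$, and the contribution is $\eps^2\cdot O(1/\eps^2)\cdot\exp(-c\kappa^2/2) = e^{-\Theta(\kappa^2)}$. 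Adding the two pieces produces the claimed estimate. The main obstacle is producing the clean ``lattice-valued'' form of the characteristic-function bound for general sub-gaussian $\xi$: the random scaling by $\xi - \xi'$ inside the $\sin^2$ has to be handled by a truncation or second-moment argument (as in the original Rudelson-Vershynin treatment), so that a modular-distance estimate on $w$ itself, rather than on $(\xi-\xi')w$, is what finally appears in the exponent; everything else reduces to routine planar calculus.
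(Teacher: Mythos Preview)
The paper does not prove this theorem; it is quoted without proof from Rudelson--Vershynin \cite{rv-rec}. Your outline is exactly the Esseen-inequality argument used there (two-dimensional Esseen, isometry $\theta\mapsto w\in H_{x_1,x_2}$, symmetrization, then the LCD lower bound on $\dist(\cdot,\Z^n)$ split into the Gaussian region and the $\kappa$-region), and you have correctly flagged the one genuine technical point---that the symmetrized variable $\bar\xi=\xi-\xi'$ rescales $w$, so a truncation $p_1\le|\bar\xi|\le p_2$ (possible since $\operatorname{Var}\bar\xi=2$) is needed before the LCD hypothesis can be invoked on the rescaled vector.
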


This theorem plays a crucial role in our task of obtaining the repulsion. 

We are now ready to present the main idea of the proof of Theorem \ref{theorem:main:Wigner:d}. Let $1\le i\le n-2$, and consider the event $\CE_i$ that $X_n$ has three eigenvalues $\lambda_i(X_n)\le  \lambda_{i+1}(X_n) \le \lambda_{i+2}(X_n)$ with $\lambda_{i+2}-\lambda_i \le \delta/n^{1/2}$ for some $\delta$.  For short, we set

$$t:=\delta/n^{1/2}.$$

Write

\begin{equation}\label{discussion:mn-split:d}
 X_n = \begin{pmatrix} X_{n-1} & X \\ X^* & m_{nn} \end{pmatrix}
\end{equation}

for an $(n-1) \times (n-1)$ minor $X_{n-1}$ and a column vector $X$.  

From the Cauchy interlacing law, we observe that 

$$\lambda_i(X_n) \le \lambda_i(X_{n-1})\le \lambda_{i+1}(X_n) \le \lambda_{i+1}(X_{n-1})\le \lambda_{i+2}(X_n).$$ 

By definition, for some eigenvector $(w,b)$ with $|b|\gg n^{-1/2}$

$$ \begin{pmatrix} X_{n-1} & X \\ X^* & m_{nn} \end{pmatrix} \begin{pmatrix} w \\ b \end{pmatrix} = \lambda_i(X_n) \begin{pmatrix} w \\ b \end{pmatrix}.$$

Extracting the top $n-1$ components of this equation we obtain

$$ (X_{n-1}-\lambda_i(X_n)) w + b X = 0.$$

If we choose unit eigenvectors $v =(v_1,\dots,v_{n-1})^T$ of $X_{n-1}$ with eigenvalue $\lambda_i(X_{n-1})$, and $v' =(v_1',\dots,v_{n-1}')^T$ of $X_{n-1}$ with eigenvalue $\lambda_{i+1}(X_{n-1})$ respectively, then

$$|b v^T X | = |v^T  (X_{n-1}-\lambda_i(X_n)) w| =|\lambda_i(X_{n-1})-\lambda_i(X_n)| |v^T w| \le t;$$

and similarly

$$|b {v'}^T X | = |{v'}^T  (X_{n-1}-\lambda_i(X_n)) w| =|\lambda_{i+1}(M_{n-1})-\lambda_i(X_n)| |{v'}^T w| \le t.$$



In summary, we have

\begin{equation}\label{eqn:Wigner:d:F}
|v^T X |  =O(\delta)  \wedge |{v'}^T X |  = O(\delta).
\end{equation}

Let $\CF_i$ denote this event. By Theorem \ref{theorem:RV:d}, if we can show $\LCD_{\kappa,\gamma}(v,v')$ large, then the $\P(\CF_i)$ can be estimated quite efficiently. In what follows we will focus on $\inf_{u\in H_{v,v'}} \LCD_{\kappa,\gamma}(u)$ by first studying $\inf_{u\in H_{v,v'}} \LCDhat_{\kappa,\gamma}(u,\alpha)$, and then using Lemma \ref{lemma:comparison:regularized} to pass back to $\LCD$.

We start with a simple fact first.
\begin{fact}\label{fact:Wigner:d} For any unit vector $u$ from $H_{v,v'}$, we have 

$$\|(X_{n-1}-\lambda_i(X_{n-1}))^2 u\|\le t^2.$$

In particular, by the Cauchy-Schwarz inequality

\begin{equation}\label{eqn:Wigner:d:CS}
\|(X_{n-1}-\lambda_i(X_{n-1}))u\|\le t.
\end{equation}
\end{fact}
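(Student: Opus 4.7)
The plan is to exploit the spectral structure of $X_{n-1}$ restricted to the two-dimensional invariant subspace $H_{v,v'}$, combined with Cauchy interlacing to relate the gap of $X_{n-1}$ to the gap $t$ of $X_n$.

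First, since $v,v'$ are eigenvectors of the symmetric matrix $X_{n-1}$ attached to (possibly equal but then chosen orthogonal) eigenvalues, they form an orthonormal basis of $H_{v,v'}$, so any unit $u \in H_{v,v'}$ can be written as $u = \alpha v + \beta v'$ with $\alpha^2 + \beta^2 = 1$. Setting $A := X_{n-1} - \lambda_i(X_{n-1})$, one has $Av = 0$ and $Av' = (\lambda_{i+1}(X_{n-1}) - \lambda_i(X_{n-1}))\,v'$, so $A^2 u$ is a scalar multiple of $v'$ with norm at most $(\lambda_{i+1}(X_{n-1}) - \lambda_i(X_{n-1}))^2$.

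Second, I would invoke the Cauchy interlacing chain
$$\lambda_i(X_n) \le \lambda_i(X_{n-1}) \le \lambda_{i+1}(X_n) \le \lambda_{i+1}(X_{n-1}) \le \lambda_{i+2}(X_n)$$
already noted in the discussion above. This forces $\lambda_{i+1}(X_{n-1}) - \lambda_i(X_{n-1}) \le \lambda_{i+2}(X_n) - \lambda_i(X_n) \le t$, and combining with the previous step yields the asserted bound $\|A^2 u\| \le t^2$.

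For the stated Cauchy--Schwarz consequence, symmetry of $A$ gives $\|Au\|^2 = \langle Au, Au\rangle = \langle A^2 u, u\rangle \le \|A^2 u\|\,\|u\| \le t^2$, whence $\|Au\|\le t$. There is no genuine obstacle here: the statement reduces to a one-line spectral computation in the two-dimensional subspace $H_{v,v'}$, with Cauchy interlacing supplying the only substantive ingredient.
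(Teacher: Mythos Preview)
Your proof is correct and follows essentially the same approach as the paper: write $u=\alpha v+\beta v'$, use that $v$ is annihilated by $A:=X_{n-1}-\lambda_i(X_{n-1})$ while $v'$ is an eigenvector, and bound the resulting eigenvalue gap of $X_{n-1}$ by $t$ via Cauchy interlacing. Your write-up is in fact slightly more explicit than the paper's, spelling out both the interlacing step and the Cauchy--Schwarz computation $\|Au\|^2=\langle A^2u,u\rangle\le\|A^2u\|$.
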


\begin{proof}[Proof of Fact \ref{fact:Wigner:d}] Assume that $u=av+a'v'$ with $a^2+{a'}^2=1$, then 

\begin{align*}
\|(X_{n-1}-\lambda_i(X_{n-1}))^2 u\| &=\|(X_{n-1}-\lambda_i(X_{n-1}))(X_{n-1}-\lambda_i(X_{n-1}))u \|\\
&= \|(X_{n-1}-\lambda_i(X_{n-1}))(X_{n-1}-\lambda_i(X_{n-1}))v'\|\\
&=|a' (\lambda_{i+1}(X_{n-1})-\lambda_i(X_{n-1}))^2| \\
&\le t^2.
\end{align*}

\end{proof}

Next, from Lemma \ref{lemma:Wigner:comp} and Lemma \ref{lemma:Wigner:key}, and by \eqref{eqn:Wigner:d:CS} from Fact \ref{fact:Wigner:d}, we infer the following.

\begin{theorem}\label{theorem:Wigner:d:poor} There exist positive constants $c,\alpha_0$ such that for $\kappa=n^{2c}, \gamma=1/2$, and for any $n^{-c}\le \alpha \le c'/4$ the following holds  for any $t\ge n^{-c/ \alpha}\kappa/\sqrt{\alpha}$ 

$$\P\Big(X_{n-1} \mbox{ has two eigenvectors }v,v' \mbox{ with eigenvalues } |\lambda-\lambda'|\le t \mbox{ and there exists } u \in H_{v,v'} \cap \Incomp(c_0,c_1)$$

$$\mbox{ such that } \LCDhat_{\kappa,\gamma}(u,\alpha)\le t^{-1} \kappa /\sqrt{\alpha}\Big)  =O( \exp(-\alpha_0 n/2)).$$
\end{theorem}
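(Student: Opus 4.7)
The plan is to mimic the argument from the proof of Lemma~\ref{lemma:Wigner:incomp}, reducing the bad event to a statement about approximate null vectors of $X_{n-1}-\lambda_0$ for a discretized $\lambda_0$, and then invoking Lemma~\ref{lemma:Wigner:key} together with a union bound. The key input that lets us pass from the two-eigenvalue condition to a single-eigenvalue-style bound is Fact~\ref{fact:Wigner:d}.

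First I would intersect with the norm event $\CB$ of \eqref{eqn:Xn:norm}, which already costs only $\exp(-\Theta(n))$; this ensures $\lambda_i(X_{n-1})\in[-10\sqrt n,10\sqrt n]$. Fix the target $D:=t^{-1}\kappa/\sqrt\alpha$. For any pair of eigenvectors $v,v'$ of $X_{n-1}$ with $|\lambda-\lambda'|\le t$ and any unit $u\in H_{v,v'}\cap\Incomp(c_0,c_1)$ with $\LCDhat_{\kappa,\gamma}(u,\alpha)\le D$, Fact~\ref{fact:Wigner:d} gives $\|(X_{n-1}-\lambda_i(X_{n-1}))u\|\le t$. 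Approximating $\lambda_i(X_{n-1})$ by a multiple $\lambda_0$ of $t$ within distance $t$, the triangle inequality yields $\|(X_{n-1}-\lambda_0)u\|\le 2t$, so $u$ witnesses the event controlled by Lemma~\ref{lemma:Wigner:key}.

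With this choice of $D$, the parameter $\beta$ of Lemma~\ref{lemma:Wigner:key} is $\beta=\kappa/(\sqrt\alpha D)=t$, and the hypothesis $t\ge n^{-c/\alpha}\kappa/\sqrt\alpha$ reads exactly as $D\le n^{c/\alpha}$, putting us in the admissible range. Since $2t=o(t\sqrt{n-1})=o(\beta\sqrt{n-1})$ for large $n$, Lemma~\ref{lemma:Wigner:key} (applied to the $(n-1)\times(n-1)$ matrix $X_{n-1}$, with the ``$u$'' there set to $0$) yields, for each fixed $\lambda_0$,
\[
\P\bigl(\exists\,u\in T_{D,\kappa,\gamma,\alpha}:\ \|(X_{n-1}-\lambda_0)u\|=o(\beta\sqrt{n-1})\bigr)=O(\exp(-\alpha_0 n)).
\]

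I would then take a union bound over $\lambda_0\in t\Z\cap[-10\sqrt n,10\sqrt n]$; the number of candidates is $O(\sqrt n/t)\le O(\sqrt{n\alpha}\,n^{c/\alpha}/\kappa)$. The main quantitative issue, which is the only real obstacle, is the calibration of the absolute constant $c$: one needs $c$ small enough that the factor $n^{c/\alpha}$ is dominated by $\exp(\alpha_0 n/2)$ uniformly on the full range $n^{-c}\le\alpha\le c'/4$, the tightest case being $\alpha=n^{-c}$ where $n^{c/\alpha}=\exp(cn^c\log n)$. This is precisely the same balance already resolved in the proof of Lemma~\ref{lemma:Wigner:incomp}, and shrinking $c$ if necessary we obtain the desired $O(\exp(-\alpha_0 n/2))$ bound. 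Combining with the excluded event $\CB^c$ completes the proof.
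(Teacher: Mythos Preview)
Your proposal is correct and follows essentially the same route as the paper's proof: set $D=t^{-1}\kappa/\sqrt\alpha$ and $\beta=t$, use Fact~\ref{fact:Wigner:d} (equation \eqref{eqn:Wigner:d:CS}) to get $\|(X_{n-1}-\lambda)u\|\le t$, discretize the eigenvalue, invoke Lemma~\ref{lemma:Wigner:key}, and take a union bound over the $O(\sqrt n\,\beta^{-1})$ choices of $\lambda_0$. Your additional remark about calibrating $c$ so that the union-bound factor is absorbed mirrors exactly the computation already carried out in the proof of Lemma~\ref{lemma:Wigner:incomp}.
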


\begin{proof}[Proof of Theorem \ref{theorem:Wigner:d:poor}] Let 

$$D:= t^{-1} \kappa/\sqrt{\alpha} \mbox { and } \beta:=t .$$

It is clear that with the given range of $t$ one has $1\le D\le n^{c/\alpha}$. By approximating $\lambda_i(X_{n-1})$ by $\lambda_0$, a multiple of $\beta^{-1}$, and also by \eqref{eqn:Wigner:d:CS} and by the triangle inequality

$$\|(X_{n-1}-\lambda_0)u\|\le \|(X_{n-1}-\lambda_i(X_{n-1}))u\| + |\lambda_i(X_{n-1})-\lambda_0| = O(\beta).$$ 

To complete the proof, we just apply Lemma \ref{lemma:Wigner:key} for these choices of $D$ and $\beta$, and then take the union bound over all $O(\sqrt{n}\beta^{-1})$ choices of $\lambda_0$.
\end{proof}

Let $\CE_i$ be the event that $X_{n-1}$ has eigenvectors $v,v'$ with eigenvalues $|\lambda_{i+1}(X_{n-1})-\lambda_{i}(X_{n-1})|\le t$, and $\CE_i\wedge \CH_i$ be the event $\CE_i$ coupled with $\inf_{u\in H_{v,v'}} \LCDhat_{\kappa, \gamma}(u,\alpha) \gg  t^{-1} \kappa/\alpha$. We have learned from Theorem \ref{theorem:Wigner:d:poor} that 

$$\P(\CE_i \wedge \CH_i) = \P(\CE_i) - O( \exp(-\alpha_0 n/2)).$$

On the other hand, our main result of the previous section, \eqref{eqn:Wigner:1:ball}, implies that $\P(\CE_i) =O\left(\frac{\delta}{\sqrt{\alpha}}\right)$. Thus

\begin{equation}\label{eqn:Wigner:d:EH}
\P(\CE_i \wedge \CH_i) \ll \frac{\delta}{\sqrt{\alpha}}.
\end{equation}

Now, condition on $\CE_i \wedge \CH_i$,  for every $u\in H_{v,v'}$ we have $ \LCDhat_{\kappa,\gamma}(u,\alpha)\ge t^{-1} \kappa/\sqrt{\alpha}$. It then follows from Lemma \ref{lemma:comparison:regularized} that 

$$\LCDhat_{\kappa, \gamma }(u,\alpha) \ll \sqrt{\alpha}\LCD_{\kappa,c_1\gamma \sqrt{\alpha} /2}(u).$$ 

We have thus obtained the key estimate

\begin{equation}\label{eqn:Wigner:d:vv'}
\LCD_{\kappa, c_1\gamma \sqrt{\alpha}/2}(v,v') = \inf_{u\in H_{v,v'}} \LCD_{\kappa, c_1\gamma \sqrt{\alpha}/2}(u) \gg  t^{-1} \kappa/\alpha.
\end{equation}

Now we estimate $\P(\CF_i| \CE_i\wedge \CH_i)$ (where we recall $\CF_i$ from \eqref{eqn:Wigner:d:F}). As $\delta = tn^{1/2} \gg  (t^{-1} \kappa/\alpha)^{-1}$, and as $c_1,\gamma$ are all fixed constants, by Theorem \ref{theorem:RV:d},

\begin{equation}\label{eqn:Wigner:d:FEH}
\P(\CF_i | \CE_i\wedge \CH_i) \ll \left(\frac{\delta}{\sqrt{\alpha}} + e^{-\Theta(\kappa^2)}\right)^2 \ll \left(\frac{\delta}{\sqrt{\alpha}}\right)^2.
\end{equation}

Combining  \eqref{eqn:Wigner:d:FEH} with \eqref{eqn:Wigner:d:EH}, we obtain that

\begin{equation}\label{eqn:Wigner:d:together}
\P(\CF_i \wedge \CE_i) \ll \left(\frac{\delta}{\sqrt{\alpha}}\right)^3 + \exp(-\Theta(n)) \ll \left(\frac{\delta}{\sqrt{\alpha}}\right)^3.
\end{equation}

Together with \eqref{badevent1}, this completes the proof of the case $l=2$ in Theorem \ref{theorem:main:Wigner:d}, where the condition $\delta \ge n^{1-c/\alpha}$ automatically  implies the requirement $t\ge n^{-c/ \alpha}\kappa/\sqrt{\alpha}$ of Theorem \ref{theorem:Wigner:d:poor}, as long as $c$ is sufficiently small.

\subsection{Proof sketch for general $l$} By the definition of $d$, we have  $2^d\le l < 2^{d+1}$, and that 

$$\lambda_i(X_n) \le \lambda_i(X_{n-1})\le \dots \le  \lambda_{i+l-1}(X_{n-1}) \le \lambda_{i+l}(X_n).$$ 

Let $v_1,\dots, v_{l}$ be the eigenvectors of $X_{n-1}$ corresponding to $\lambda_i(X_{n-1}),\dots,\lambda_{i+l-1}(X_{n-1})$. Then similarly to \eqref{eqn:Wigner:d:F}, with $X$ being the last column of $X_n$ in \eqref{discussion:mn-split:d}, we have the conjunction

$$(|v_1^T X |  =O(\delta) ) \wedge \dots \wedge (|v_{2^d}^T X |  = O(\delta)).$$

Also, as an analog of Fact \ref{fact:Wigner:d}, for any $u\in H_{v_1,\dots,v_{2^d}}$ and for $t=\delta/n^{1/2}$ we have

$$\|(X_{n-1}-\lambda_i(X_{n-1}))^{2^d} u\|\le t^{2^d}.$$ 

So by Cauchy-Schwarz,

\begin{equation}\label{eqn:Wigner:l:CS}
\|(X_{n-1}-\lambda_i(X_{n-1}))u\|\le t.
\end{equation}

Additionally, arguing similarly to the proof of Theorem \ref{theorem:Wigner:d:poor}, if we let $\CE_i'$ be the event that $X_{n-1}$ has eigenvectors $v_1,\dots,v_{l}$ with eigenvalues $|\lambda_{i+l-1}(X_{n-1})-\lambda_{i}(X_{n-1})|\le t$, and $\CE_i'\wedge \CH_i'$ be the event $\CE_i'$ coupled with $\inf_{u\in H_{v_1,\dots,v_{2^d}}} \LCDhat_{\kappa, \gamma}(u,\alpha) \gg  t^{-1} \kappa/\alpha$, then

$$\P(\CE_i' \wedge \CH_i') = \P(\CE_i') - O( \exp(-\alpha_0 n/2)).$$

To this end, conditioning on $\CE_i'\wedge \CH_i'$, it follows from \eqref{eqn:Wigner:l:CS}, together with the small probability bound from Theorem \ref{theorem:RV:d} (applied to $2^d$ vectors), that
$$\P_X\left (|v_1^T X |  =O(\delta) ) \wedge \dots \wedge (|v_{2^d}^T X |  = O(\delta)  )| \CE_i\wedge \CH_i \right) \ll (\frac{\delta}{\sqrt{\alpha}})^{2^d}.$$

By iterating this conditional process up to $X_{n-l}$ and proceed similarly to \eqref{eqn:Wigner:d:together}, we obtain the following lower bound for the exponent of $\delta$

$$\sum_{k=0}^{d-1} 4^k + (l-2^d+1)2^d = \frac{(3l+3 -2^{d+1})2^d -1}{3},$$

where we used the fact that the contribution to the exponent for the running index $l'$ from $2^d$ to $l$ is $(l-2^d+1)2^d$, and for $l'$ varying from $2^k$ to $2^{k+1}-1$ (with $0\le k\le d-1$) is $(2^k)(2^k) =4^k$.

\subsection{Remark}\label{remark:Wigner:1:complex} We finish this section with a short discussion on how to obtain a version of Theorem \ref{theorem:main:Wigner} for complex Wigner matrices with a probability bound of $O((\delta/\alpha)^2)$.

First of all, let $X_n$ be a random Hermitian matrix where the real and complex components of the off-diagonal terms are iid copies of a sub-gaussian random variable of mean zero and variance 1/2. 

An identical proof of Theorem \ref{theorem:Wigner:1:together} then implies that with probability $1-\exp(-\Theta(n))$, for any unit complex eigenvector $v$ of $X_n$ (including all complex phases $e^{i\theta}, 0\le \theta \le 2\pi$), the $S^{2n-1}$ vector $(\Re(v),\Im(v))$  belongs to $\Incomp(c_0,c_1)$ and

\begin{equation}\label{eqn:LCDhat:complex}
\LCDhat_{\kappa,\gamma}(\Re(v),\Im(v),\alpha)\gg n^{c/\alpha}.
\end{equation}

Next, by \eqref{badevent1}, $\P(\CE_i)$ is controlled by $\P(|v^TX| = O(\delta))$, where $v$ is any unit eigenvector associated to $\lambda_i$. For short, write $w=(\Re(v),\Im(v)), w'= (\Im(v),-\Re(v))$ and  $Y=(\Re(X),\Im(X))$, we then have

\begin{equation}\label{badevent'}
\P(|v^TX| = O(\delta)) = \P \left( |w^T Y \big| =O(\delta) \wedge  |({w'}^T Y)|=O(\delta) \right).
\end{equation}

Thus we are in the position to apply Theorem \ref{theorem:RV:d}. For this, we need to verify that the subspace $H_{w,w'}$ does not contain any unit vector with small $\LCDhat$. However, this is exactly what we have obtained in \eqref{eqn:LCDhat:complex} (see also the proof of Theorem \ref{theorem:Wigner:d:poor}). 

In conclusion, with $\delta\ge n^{-c/\alpha}$, the bound of Theorem \ref{theorem:RV:d} then implies 

$$\P(\CE_i) \ll (\frac{\delta}{\sqrt{\alpha}})^2.$$

For  the more general Theorem \ref{theorem:main:Wigner:d}, it looks plausible that we can double the value of $c_l$, using a similar, but more tedious argument.  We skip the details.

\section{Consecutive gaps for perturbed matrices: proof of Theorem \ref{theorem:main:perturbation}}\label{section:perturbation}

In the perturbed case $M_n = F_n + X_n$,  the approach used in Section \ref{section:Wigner:1} and Section \ref{section:Wigner:d} does not seem to work. This is mainly because the norm of $M_n$ now blows up, and also because $\xi$ is assumed to have bounded $(2+\ep_0)$-moment only. To handle this case, we will rely instead on the inverse Littlewood-Offord results developed by the second and third author.

It suffices to justify Theorem \ref{theorem:perturbation:poor}. As we have seen, it is technically convenient to replace the concentration probability $\rho_\delta(v)$ with a segmental variant $\rho_{\delta,\alpha}(v)$, and to work with two related scales $\delta$ rather than a single scale $\delta$. 

More precisely, for any $\delta > 0$, $0 < \alpha \leq 1$ and any $v \in \R^n$, let $\rho_{\delta,\alpha}(v)$ denote the quantity
$$ \rho_{\delta,\alpha}(v) := \inf_{I \subset \{1,\dots,n\}: |I| = \lfloor\alpha n\rfloor} \rho_\delta( v \downharpoonright_I )$$
where $v \downharpoonright_I = (v_{i_1},\dots,v_{i_m})$ is the restriction of $v$ to $I = \{i_1,\dots,i_m\}$ with $i_1 < \dots < i_m$. 

 We observe the easy inequality
\begin{equation}\label{rhoar}
\rho_\delta(v) \leq \rho_{\delta,\alpha}(v).
\end{equation}

We will assume that the matrix $M_n$ has operator norm bounded by $n^{\gamma'}$, for some $\gamma' \ge \max\{\gamma,1/2\}$, with probability at least $1-O(n^{-A})$. Thus for instance if $\xi$ has subgaussian distribution, then one can take $\gamma' =\max\{\gamma,1/2\}$. (One can also see that $\gamma' \le A/2+\gamma+4$ in any case.)

For the rest of the treatment of Theorem \ref{theorem:perturbation:poor}, we will choose 

\begin{equation}\label{eqn:eps}
\alpha:=\frac{1}{n^\eps}
\end{equation}

for some sufficiently small constant $\eps$ depending on $A$ and $\gamma'$.

We now reduce Theorem \ref{theorem:perturbation:poor} to

\begin{theorem}\label{theorem:perturbation:poor-tech}  With probability at least $1-O(\exp(-\alpha_0n))$ for some positive constant $\alpha_0$ independent of $n$, there is no unit eigenvector $v$ of $M_n$ with an eigenvalue $\lambda$ of order $O(n^{\gamma'})$ with the following property: there is a radius $\delta$ with $n^{-B}\le \delta\le n^{-B/2}$ such that

$$ n^{-A}\le \rho_{n^{\gamma'}\delta,\alpha}(v) \leq n^{0.49} \rho_{\delta,\alpha}(v).$$
\end{theorem}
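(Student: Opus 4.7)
The plan is to combine the continuous inverse Littlewood--Offord theorem (in the form developed by the second and third authors) with a net-plus-small-ball union bound. The hypothesis
\[ n^{-A} \le \rho_{n^{\gamma'}\delta,\alpha}(v) \le n^{0.49}\rho_{\delta,\alpha}(v) \]
is exactly the ``non-concentration at two scales'' condition under which the continuous inverse Littlewood--Offord theorem extracts additive structure: the small-ball probability at the coarse scale $n^{\gamma'}\delta$ fails to grow by a full factor of $n^{\gamma'}$ relative to the fine scale $\delta$, and this gap forces the optimal subvector $v\downharpoonright_I$ (with $|I| = \lfloor \alpha n\rfloor$) to concentrate near a short symmetric generalized arithmetic progression (GAP).

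First I would apply the inverse Littlewood--Offord theorem to the minimizing subset $I$ in the definition of $\rho_{\delta,\alpha}(v)$: this extracts a symmetric GAP $Q \subset \R$ of rank $r = O_{A,\gamma'}(1)$ and cardinality $|Q| \le n^{O_{A,\gamma'}(1)}\rho_{\delta,\alpha}(v)^{-1} \le n^{O(A)}$, with each coordinate of $v\downharpoonright_I$ within $O(\delta)$ of a point of $Q$. Next, I would build a net of admissible tuples $(\delta, I, Q, \lambda)$: dyadic scales $\delta \in [n^{-B}, n^{-B/2}]$; a fine discretization of the generators and box dimensions of $Q$; $\lambda \in [-Cn^{\gamma'},Cn^{\gamma'}]$ to precision $\approx n^{-B-\gamma'-10}$; and $I$ ranging over $\binom{n}{\lfloor\alpha n\rfloor}$ subsets of $[n]$. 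Since $\alpha = n^{-\eps}$, the total net size is bounded by $\exp(O(n^{1-\eps}\log n))$, which is sub-exponential in $n$.

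For each fixed tuple I would bound the probability that $M_n$ admits a unit eigenvector with eigenvalue within $n^{-B-\gamma'-10}$ of $\lambda$ and with $v\downharpoonright_I$ within $O(\delta)$ of $Q^{|I|}$. After laying a sufficiently fine $\eta$-net on the complementary restriction $v\downharpoonright_J$ (where $J = [n]\setminus I$) and substituting a net candidate into the eigenvalue equation row-by-row for $k \in J$, each row becomes a linear equation
\[ \sum_{j\in I} M_{kj} v_j \;=\; \lambda v_k - \sum_{j\in J} M_{kj} v_j - \sum_{j} F_{kj} v_j \;+\; O(\delta + \eta n^{\gamma'}) \]
in the independent entries $\{M_{kj}\}_{j\in I}$, with coefficients $v_j$ ($j\in I$) lying close to the GAP $Q$. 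The single-row small-ball probability at the required precision is controlled by the inverse cardinality of $Q$ (times the $\sqrt{\alpha n}$ length factor), hence is polynomially small; multiplying over the $\Theta(n)$ essentially independent rows of $J$ (using the symmetry of $M_n$ to isolate the genuinely independent upper-triangular entries in the block $I \times J$) yields an exponentially small bound per tuple.

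The main technical obstacle is the tension between the total entropy cost of the net (dominated by $\binom{n}{\alpha n}$ together with the $\eta$-net on $v\downharpoonright_J$) and the per-tuple exponential saving. The saving per row is roughly $n^{O(A)}\delta = n^{-B+O(A)}$, so for the product over $\Theta(n)$ rows to beat the $\exp(O(n^{1-\eps}\log n))$ entropy, $B$ must be chosen sufficiently large compared to $A$ and $\gamma'$ (consistent with the hint $B > (5A+6)\max\{1/2,\gamma\} + 5$ recorded in the paper). Careful bookkeeping of the symmetry and the conditioning --- ensuring that for each row $k \in J$ the entries $\{M_{kj}\}_{j\in I}$ are genuinely independent given the exposed data, and that compressible or exceptional eigenvectors can be ruled out in advance by a variant of Lemma~\ref{lemma:Wigner:comp} tailored to the perturbed model --- is where the quantitative work lives. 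Choosing $\eps$ small and $B$ large (as functions of $A$ and $\gamma$) then closes the argument.
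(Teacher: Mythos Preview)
Your entropy--saving balance does not close, and the reason is structural. You extract GAP structure only on the single minimizing segment $I$ of size $\lfloor\alpha n\rfloor$, then cover $v\downharpoonright_J$ (with $|J|\approx n$) by a trivial $\eta$-net. For the row equations to hold to precision $O(\delta)$ you need $\eta\lesssim \delta n^{-\gamma'}$, so this net alone has cardinality $(Cn^{\gamma'}/\delta)^{|J|}\ge n^{(B/2+\gamma')n}$. On the other side, the per-row small-ball probability of $\sum_{j\in I}M_{kj}v_j$ at scale $O(\delta)$ is exactly $\rho_\delta(v\downharpoonright_I)=q$, which by hypothesis is only known to satisfy $q\ge n^{-A-0.49}$; it is \emph{not} of order $\delta$ (your claimed saving $n^{O(A)}\delta$ is unjustified). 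The product $(n^{\gamma'}q/\delta)^n$ has no reason to be small, because nothing in the hypotheses bounds $q/\delta$ from above. Also, the GAP structure on $v\downharpoonright_I$ gives you a \emph{lower} bound on the small ball (that is how the inverse theorem works), not the upper bound you need for the row-by-row argument.

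The mechanism in the paper is different in two essential ways. First, since $\rho_{\delta,\alpha}(v)=\inf_{|I'|=\lfloor\alpha n\rfloor}\rho_\delta(v\downharpoonright_{I'})\ge q$, one has $\rho_\delta(v\downharpoonright_{I'})\ge q$ for \emph{every} segment $I'$; applying the inverse theorem to each segment in a cover of $[n]$ yields a net for the \emph{entire} vector $v$ of size $O(n^{-n/2+O(\alpha n)}q^{-n})$ (this is Lemma~\ref{lemma:perturbation:approx}). Second, the per-row upper bound comes from the \emph{other} hypothesis $\rho_{n^{\gamma'}\delta,\alpha}(v)\le n^{0.49}q$, transferred to a fine net point $w'$ of $v\downharpoonright_I$; tensorization then gives probability $O((n^{0.49}q)^{n-k})$. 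The crucial point is that the $q^{-n}$ in the net and the $q^{n}$ in the saving cancel, leaving $n^{(-1/2+0.49+O(\alpha))n}$, which wins. Without netting the full vector via GAP structure (so that the net size scales with $q^{-n}$ rather than $\delta^{-n}$), the argument cannot balance.
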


To deduce Theorem \ref{theorem:perturbation:poor}, we assume otherwise that  $\rho_{n^{-B}}(v)\ge n^{-A}$. Define the sequence $(\delta_i)_{i=0}^J$ with $J := \lceil A/0.49 \rceil$, $\delta_0 := n^{-B}$, and
$$ \delta_{j+1} := n^{\gamma'}\delta_j $$
for $0 \leq j < J$.
By assumption, $\rho_{\delta_0,\alpha}(v) \ge n^{-A}$, and if we choose $B$ so that 

\begin{equation}\label{eqn:A:1}
B\ge 2J\gamma',
\end{equation}

then because the $\rho$'s are bounded by one, there exists $j< J$ (and so $n^{-B} \le \delta_{j+1} \le n^{-B/2}$) such that

$$\rho_{\delta_{j+1},\alpha}(v)\le n^{.49}\rho_{\delta_j,\alpha}(v).$$

It remains to establish Theorem \ref{theorem:perturbation:poor-tech}, this is the objective of the rest of this section.

\subsection{The compressible case}\label{subsection:perturbation:compressible}
 Let us first consider the easy case in which there exists $n^{-B}\le \delta \le n^{-B/2}$ such that $\rho_{\delta,\alpha}(v)\ge (\alpha n)^{-1/2+\eps}$, this case is similar to our treatment of Subsection \ref{subsection:Wigner:compressible}. Our main tool will be the following Erd\H{o}s-type inverse Littlewood-Offord theorem:

\begin{theorem}\cite{erdos}\label{theorem:cilf-erdos}  Let $\eps>0$ be fixed, let $\delta > 0$, and let $v \in \R^m$ be a unit vector with
$$ \rho_\delta(v) \geq m^{-\frac{1}{2}+\eps}.$$
Then all but at most $\eps m$ of the coefficients of $v$ have magnitude at most $\delta$.
\end{theorem}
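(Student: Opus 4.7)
Let $N := \{i : |v_i| > \delta\}$ and $k := |N|$; the goal is to show $k \leq \eps m$, i.e., that the set of \emph{large} coordinates of $v$ is small. The first move is a conditioning reduction: split the random sum $S = \sum_{i=1}^m \xi_i v_i$ as $S_N + S_{N^c}$, freeze the coordinates $\xi_i$ for $i \notin N$, and observe that
$$\rho_\delta(v) = \sup_a \P(|S - a| \leq \delta) \leq \sup_a \P(|S_N - a| \leq \delta) = \rho_\delta(v|_N),$$
since after conditioning on $S_{N^c}$ the target is still an interval of half-length $\delta$ and we may take the worst choice of center. This reduces the problem to a vector $v|_N \in \R^k$ whose coordinates all strictly exceed $\delta$ in magnitude.

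The core of the proof is then the classical Erd\H{o}s-Littlewood-Offord bound: for any $w \in \R^k$ with $\min_i |w_i| \geq \delta$, one has $\rho_\delta(w) = O(k^{-1/2})$. For symmetric $\pm 1$ variables, after rescaling by $\delta$ this is Erd\H{o}s's original theorem, which reduces via Sperner's lemma to counting antichains in the Boolean lattice $2^{[k]}$: the sign patterns $\epsilon \in \{-1,+1\}^k$ producing sums in any fixed interval of length $\delta$ form an antichain in the natural order induced by the signs of $w$, hence there are at most $\binom{k}{\lfloor k/2\rfloor} = O(2^k/\sqrt{k})$ of them. Applying this to $w = v|_N$ gives $\rho_\delta(v) \ll k^{-1/2}$; combining with the hypothesis $\rho_\delta(v) \geq m^{-1/2+\eps}$ forces $k \ll m^{1-2\eps}$, which is strictly smaller than $\eps m$ once $m$ is large in terms of $\eps$.

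The step I expect to need the most care is extending the Bernoulli Erd\H{o}s bound to the general sub-gaussian or finite $(2+\eps_0)$-moment $\xi$ used in the paper's models. My plan is to symmetrize: let $\xi'$ be an independent copy of $\xi$ and set $\bar\xi := \xi - \xi'$, giving $\rho_\delta(v)^2 \le \rho_{2\delta}(v;\bar\xi)$ by the standard squaring trick, and then invoke an Esseen-Hal\'asz Fourier bound of the form
$$\rho_\delta(w;\bar\xi) \ll \delta \int_{|t|\leq 1/\delta} \prod_{i=1}^k |\phi_{\bar\xi}(t w_i)|\, dt,$$
exploiting the fact that $|\phi_{\bar\xi}|$ stays strictly below one on a set of positive measure (which is automatic from the non-degeneracy of $\xi$) to recover the same $O(k^{-1/2})$ scaling whenever $|w_i|\geq\delta$. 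With this extension in hand, the conditioning-plus-Erd\H{o}s argument above goes through in the stated generality and the contrapositive yields the theorem.
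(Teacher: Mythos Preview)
The paper does not supply its own proof here; the statement is cited as the classical Erd\H{o}s result and used as a black box. Your conditioning reduction to the large-coordinate set $N$ and the Sperner/antichain argument for Bernoulli $\xi$ are both correct, yielding $\rho_\delta(v)\le\rho_\delta(v|_N)\ll k^{-1/2}$ and hence $k\ll m^{1-2\eps}<\eps m$ for $m$ large.

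There is, however, a genuine slip in your extension to general $\xi$. As written, you first pass to the symmetrized variable via $\rho_\delta(v)^2\le\rho_{2\delta}(v;\bar\xi)$ and then bound the right-hand side by $O(k^{-1/2})$. Taking square roots this gives only $\rho_\delta(v)=O(k^{-1/4})$, hence $k\ll m^{2-4\eps}$, which is vacuous for $\eps<1/4$. The repair is simply not to square the concentration function. One clean route is the Kolmogorov--Rogozin inequality
\[
\rho_\delta(v|_N)\ \ll\ \Big(\sum_{i\in N}\bigl(1-Q(\xi v_i;\delta)\bigr)\Big)^{-1/2},
\]
where $Q(X;\lambda):=\sup_a\P(|X-a|\le\lambda)$ is the L\'evy concentration function. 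For $|v_i|>\delta$ one has $Q(\xi v_i;\delta)=Q(\xi;\delta/|v_i|)\le\sup_{t<1}Q(\xi;t)<1$, the last strict inequality being forced by $\operatorname{Var}(\xi)=1$; thus each summand is at least a constant $c(\xi)>0$ and $\rho_\delta(v|_N)=O_\xi(k^{-1/2})$. Alternatively, apply Esseen's inequality directly to the original (non-symmetrized) sum and use symmetrization only inside the integrand via $|\phi_\xi(s)|\le\exp\bigl(-\tfrac12(1-\phi_{\bar\xi}(s))\bigr)$. Either way you recover the needed $k^{-1/2}$ bound without loss, and the argument concludes as you wrote it.
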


Now as $ q=\rho_{\delta,\alpha}(v) > (\alpha n)^{-1/2+\eps}$, by Theorem \ref{theorem:cilf-erdos}, we see that for every $I \subset \{1,\dots,n\}$ with $|I| = \lfloor \alpha n \rfloor$, all but at most $O( \eps |I| )$ of the coefficients of $v \downharpoonright_I$ have magnitude at most $\delta$, where the implied constant does not depend on $\eps$.  By a simple covering argument, we conclude that $|v_i| \leq \delta$ for all $i$ outside of an exceptional set $S \subset \{1,\dots,n\}$ of cardinality $|S| = O( \eps \alpha n ) =O(n^{1-\eps})$.

If we let $\CE_S$ denote the event that the above situation holds for a given value of $S$, the probability that the conclusion of Theorem \ref{theorem:perturbation:poor-tech} fails may thus be bounded via the union bound by
$$ \sum_{S \subset \{1,\dots,n\}: |S| = O(\alpha n)} \P( \CE_S ) + O( \exp(-\Theta(n)) ).$$
By Stirling's formula, the number of possible exceptional sets $S$ is at most $n^{O(\eps n^{1-\eps})}$.  Thus it suffices to show that

\begin{equation}\label{perturbation:pes}
 \P( \CE_S ) \ll \exp( -\alpha_0 n  )
\end{equation}

uniformly in $S$ and some fixed $\alpha_0>0$ independent of $\eps$.

By symmetry we may take
$$ S = \{n'+1,\dots,n\}$$
for some $n' = (1 - O(\alpha)) n$.

Now suppose that the event $\CE_S$ holds, and let $v$, $\lambda$ be as above.  We split
$$ M_n = \begin{pmatrix} M_{n'} & D \\ D^* & M_{n-n'} \end{pmatrix}$$
where $M_{n'}$, $M_{n-n'}$ are the top left $n' \times n'$ and bottom right $(n-n' )\times (n-n')$ minors of $M_n$ respectively, and $D$ is a $n' \times (n-n')$ matrix whose entries are independent copies of $\xi$ (modulo the deterministic part from $F_n$), and also independent of $M_{n'}, M_{n-n'}$.  We also split $ v^T = (v' , v'')^T$, with $v' \in \R^{n'}$ and $v'' \in \R^{n-n'}$.  

By hypothesis, all entries of $v'$ have magnitude at most $\delta=O(n^{-B/2})$, and so provided that $B>1$,

\begin{equation}\label{perturbation:spin}
 \| v' \| \leq \delta n^{1/2}\ll \frac{1}{10}
\end{equation}

and thus (since $v$ is a unit vector)

\begin{equation}\label{perturbation:vpp}
\frac{1}{2} \leq \|v''\| \leq 1.
\end{equation}

From the eigenvector equation
$$
\begin{pmatrix} M_{n'} & D \\ D^* & M_{n-n'} \end{pmatrix} 
\begin{pmatrix} v' \\ v'' \end{pmatrix} = \lambda \begin{pmatrix} v' \\ v'' \end{pmatrix}$$

we have

\begin{equation}\label{perturbation:kitty}
 (M_{n'}-\lambda) v' + D v'' = 0.
\end{equation}

Hence,

\begin{equation}\label{perturbation:bojo}
 \|  Dv'' \| =O( n^{\gamma'} \delta n^{\frac{1}{2}}) =O( n^{-\frac{B}{2} + \frac{1}{2} +\gamma'}).
\end{equation}

On the other hand, by a standard epsilon-net argument (noting that $n-n'=O(n^{1-\eps})$), with probability $1-O(\exp(-\alpha_0 n))$ for some fixed $\alpha_0>0$ independent of $\eps$, we have

\begin{equation}\label{perturbation:dvin}
 \inf_{w \in\R^{n-n'}: \|w\|=1} \| Dw \| \gg n^{-\frac{1}{2}}.
\end{equation}

We hence obtain \eqref{perturbation:pes} provided that  

\begin{equation}\label{eqn:A:2}
B> 2\gamma' +2.
\end{equation}

\subsection{The incompressible case}\label{subsection:perturbation:incompressible}

Now we assume that there exists a radius $\delta$ with $n^{-B}\le \delta\le n^{-B/2}$ such that

$$ n^{-A}\le  \rho_{n^{\gamma'}\delta,\alpha}(v) \leq n^{.49} \rho_{\delta,\alpha}(v); \mbox{ and } \rho_{\delta,\alpha} \le (\alpha n)^{-1/2+\eps}.$$

In this case, Theorem \ref{theorem:cilf-erdos} is insufficient to control the ''rich" vector $v$.  Instead, we will rely on the more general inverse Littlewood-Offord theorem from \cite{NVoptimal} (see also \cite{TVinverse,TVinverse1}).  Define a \emph{generalised arithmetic progression} (or GAP) to be a finite subset $Q$ of the real line $\R$ of the form
$$ Q = \{ a_1 w_1 + \dots + a_r w_r: a_i \in \Z, |a_i| \leq N_i \hbox{ for all } i=1,\dots,r\}$$
where $r \geq 0$ is a natural number (the \emph{rank} of the GAP), $N_1,\dots,N_r > 0$ are positive integers (the \emph{dimensions} of the GAP), and $w_1,\dots,w_r \in \R$ are real numbers (the \emph{generators} of the GAP).  We refer to the quantity $\prod_{i=1}^r (2N_i+1)$ as the \emph{volume} $\operatorname{vol}(Q)$ of $Q$; this is an upper bound for the cardinality $|Q|$ of $Q$. We then have

\begin{theorem}[Continuous inverse Littlewood-Offord theorem]\label{theorem:cilf}  Let $\eps>0$ be fixed, let $\delta > 0$, and let $v \in \R^n$ be a unit vector whose small ball probability $\rho := \rho_{\delta}(v)$ obeys the lower bound
$$ \rho \gg n^{-O(1)}.$$
Then there exists a generalized arithmetic progression $Q$ of volume 
$$ \operatorname{vol}(Q) \leq \max\left( O\left(\frac{1}{\rho\sqrt{n}}\right), 1 \right)$$
such that all but at most $\eps n$ of the coefficients $v_1,\dots,v_n$ of $v$ lie within $\delta$ of $Q$.  Furthermore, if $r$ denotes the rank of $Q$, then $r=O(1)$, and all the generators $w_1,\dots,w_r$ of $Q$ have magnitude $O(1)$.
\end{theorem}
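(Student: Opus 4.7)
The plan is to prove this by a Fourier-analytic argument, following the Hal\'asz--Tao--Vu paradigm that underlies discrete inverse Littlewood--Offord theory in \cite{TVinverse,TVinverse1,NVoptimal}, and then adapting it to the continuous setting via a rounding reduction.

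First I would reduce to a discrete problem. Let $\tilde v_j$ denote $v_j$ rounded to the nearest multiple of $\delta$. A standard comparison shows that $\rho_\delta(v)$ and the discrete small-ball probability of $\tilde v$ at scale $\delta$ are comparable up to constants, so it suffices to find a GAP structure for $\tilde v$. This reduction is convenient because the characteristic function of $\delta^{-1}\sum_j \xi_j \tilde v_j$ can be analyzed on $\R/\Z$.  Then I would invoke Esseen's concentration inequality to write
$$\rho := \rho_\delta(v) \ll \delta \int_{-1/\delta}^{1/\delta} \prod_{j=1}^n |\hat\mu(t v_j)|\,dt,$$
where $\hat\mu$ is the characteristic function of $\xi$. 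Since $\xi$ has mean zero, unit variance, and is not concentrated on a proper sublattice, one has the pointwise bound $|\hat\mu(u)| \le \exp(-c\, \|u\|_{\R/\Z}^2)$ near the origin. The hypothesis $\rho \gg n^{-O(1)}$ then forces the set
$$T := \Bigl\{ t \in [-1/\delta, 1/\delta]\ :\ \sum_{j=1}^n \|t v_j\|_{\R/\Z}^2 \ll 1 \Bigr\}$$
to have Lebesgue measure $\gg \rho/\delta$, i.e.\ $T$ is a large ``Bohr-type'' set in the Fourier dual.

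The third step is to extract additive structure from $T$. By Pl\"unnecke--Ruzsa arguments, the iterated sumsets $kT$ remain controlled in measure relative to $|T|$, and by a continuous Freiman-type theorem $T$ is contained in a GAP $P$ of bounded rank with volume $\ll 1/(\rho \sqrt n)$; the $\sqrt n$ gain comes from Cauchy--Schwarz applied to the defining inequality of $T$ together with $\|v\|=1$. Dualizing, for all but $\eps n$ indices $j$ the coefficient $v_j$ lies within $\delta$ of a dual GAP $Q$ whose generators are determined by those of $P$, and a pigeonhole step selects a single $Q$ that captures most coefficients simultaneously.

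The hard part is achieving the \emph{sharp} volume bound $O(1/(\rho \sqrt n))$ with rank $r=O(1)$ and with generators of magnitude $O(1)$, rather than a weaker polynomial-in-$n$ dependence. This optimal statement is the main content of \cite{NVoptimal}; its proof runs by induction on the rank of $Q$, using a level-set decomposition of the Fourier integral to peel off one generator at a time, combined with a Hal\'asz-type treatment of the exceptional coordinates. The control on generator magnitudes exploits the normalization $\|v\|=1$ to preclude any single generator from being much larger than $1$, while ensuring the generators can be chosen commensurable with the scale $\delta$.
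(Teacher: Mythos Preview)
The paper does not actually prove Theorem~\ref{theorem:cilf}; it is stated and invoked as a black box, with attribution to \cite{NVoptimal} (and the related \cite{TVinverse,TVinverse1}). So there is no ``paper's own proof'' to compare against---the paper simply imports the result from the literature.

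Your sketch is a reasonable high-level summary of the Hal\'asz--Tao--Vu--Nguyen--Vu machinery that underlies \cite{NVoptimal}, and in that sense it is consistent with what the paper cites. A few remarks on accuracy: the pointwise bound $|\hat\mu(u)| \le \exp(-c\,\|u\|_{\R/\Z}^2)$ does not follow merely from $\xi$ having mean zero and unit variance; one needs a symmetrization step (replacing $\xi$ by $\xi - \xi'$) before the distance-to-integers structure emerges, and even then the bound is on $|\E e(\theta(\xi-\xi'))|$ rather than on $|\hat\mu|$ directly. Also, the ``dualizing'' from the large-spectrum set $T$ to a GAP containing the $v_j$ is not literally a duality in the sense you suggest; in \cite{NVoptimal} one works directly with the level sets of $\sum_j \|t v_j\|_{\R/\Z}^2$ and extracts the GAP for the $v_j$ via a covering/Freiman argument applied to the coefficients themselves, not via an abstract dual progression. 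Finally, the $\sqrt{n}$ gain comes from a more delicate argument than a single Cauchy--Schwarz; this is precisely where \cite{NVoptimal} improves on earlier work. None of these are fatal for an outline, but if you intend this as more than a pointer to the literature, those steps would need to be made precise.
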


We now begin the proof of Theorem \ref{theorem:perturbation:poor-tech} with the setting

$$q := \rho_{\delta,\alpha}(v)  < (\alpha n)^{-1/2+\eps}.$$ 

 As $ \rho_{n^{\gamma'} \delta,\alpha}(v) \leq n^{.49} q$, there must exist a subset $I$ of $\{1,\dots,n\}$ of cardinality $|I| = \lfloor \alpha n \rfloor$
with

\begin{equation}\label{perturbation:rory}
 \rho_{n^{\gamma'} \delta}(v \downharpoonright_I ) \leq n^{.49} q.
\end{equation}

For each $I$, let $\CE'_I$ be the event that the above situation occurs, thus the conclusion of Theorem \ref{theorem:perturbation:poor-tech} holds with probability at most
$$ \sum_{I \subset \{1,\dots,n\}: |I| = \lfloor \alpha n \rfloor} \P( \CE'_I ) + O( \exp(-\Theta(n) ) ).$$
We crudely bound the number of possible $I$ by $2^n$. The key estimate is stated below.

\begin{lemma}\label{lemma:perturbation:E'_I} We have
$$\P(\CE'_I) \ll n^{-0.1 n + O( \eps n )}.$$
\end{lemma}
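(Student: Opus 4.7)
My plan is to apply the continuous inverse Littlewood--Offord theorem (Theorem~\ref{theorem:cilf}) to extract structure on $v\downharpoonright_I$, discretize the structured vectors into a sub-exponential-size net, and then use the eigenvector row equations together with a decoupling argument to bound the probability that each net point is consistent with a genuine eigenvector of $M_n$.

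On the event $\CE'_I$ we have $\rho_{n^{\gamma'}\delta}(v\downharpoonright_I)\le n^{0.49}q$ with $q\ge n^{-A-0.49}$, so Theorem~\ref{theorem:cilf} applies and produces a GAP $Q$ of rank $O(1)$, generators of magnitude $O(1)$, and volume $V=O\!\bigl(1/(n^{0.49}q\sqrt{\alpha n})\bigr)\le n^{O(1)}$, such that all but $\eps|I|$ of the coordinates of $v\downharpoonright_I$ are within $n^{\gamma'}\delta$ of $Q$. I then form an $\ell_\infty$-net $\CN$ at scale $n^{\gamma'}\delta$ by enumerating the discretized GAP parameters (polynomially many choices, since generators are $O(1)$ and dimensions are at most $V$), the exceptional subset $E\subset I$ with $|E|\le \eps|I|$, the choice among the at most $|Q|\le V$ nearest elements of $Q$ for each coordinate in $I\setminus E$, and a $\delta$-scale discretization for the coordinates in $E$. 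A direct count yields $|\CN|\le V^{|I|}\cdot\binom{|I|}{\eps|I|}\cdot(1/\delta)^{\eps|I|}\cdot n^{O(1)}=n^{O(\eps n)}$, using $V,1/\delta\le n^{O(1)}$ and $|I|=n^{1-\eps}$.

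The main technical step is to show, for each $v_0\in\CN$ and each $\lambda_0$ in a polynomial-cardinality discretization of $[-n^{\gamma'},n^{\gamma'}]$, that
\[
p(v_0,\lambda_0):=\P\bigl(\exists\ v\ \text{unit eigenvector with}\ |\lambda-\lambda_0|\le n^{-C},\ \|v\downharpoonright_I-v_0\|_\infty\le n^{\gamma'}\delta\bigr)
\]
is suitably small. The eigenvector equation at row $r\in I^c$ reads
\[
\sum_{k\in I}(M_n)_{rk}v_k=\lambda v_r-\sum_{k\in I^c,k\ne r}(M_n)_{rk}v_k-(M_n)_{rr}v_r,
\]
and after substituting $v_k\approx (v_0)_k$ on $I$ it becomes a linear anti-concentration event in the independent entries $((M_n)_{rk})_{k\in I}$ with coefficient vector essentially equal to $v_0$; each such event holds with probability at most $\rho_{O(n^{\gamma'}\delta)}(v_0)\lesssim n^{0.49}q$, modulo a discretization overhead. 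Once one has decoupled $v$ from each individual row, the row equations for distinct $r\in I^c$ involve disjoint sets of random entries and the probabilities multiply, so $p(v_0,\lambda_0)\le(n^{0.49}q)^{|I^c|}\cdot n^{O(1)}$. A union bound over $\CN$ and the $\lambda_0$-discretization, combined with $q\le(\alpha n)^{-1/2+\eps}$ and $|I^c|=n(1-o(1))$, then gives $\P(\CE'_I)\le n^{-0.1n+O(\eps n)}$.

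The hardest part will be the decoupling: since the eigenvector $v$ is a global function of $M_n$, it is not independent of the entries $(M_n)_{rk}$ for $k\in I$ in row $r$, so a naive Fubini is invalid. My plan is to handle this via a stability argument: using a quantitative perturbation estimate (Davis--Kahan or a Cauchy-interlacing variant) together with a lower bound on the smallest singular value of $M_n-\lambda$ off the target eigenspace, one can replace $v$ in the $r$-th row equation by a proxy $v^{(r)}$ measurable with respect to all rows except the $r$-th, with error negligible relative to the tolerance $n^{\gamma'}\delta$. With $v^{(r)}$ in place of $v$ the row-by-row independence is legitimate, and the product bound goes through. Verifying that this perturbation error can genuinely be absorbed, and that the exceptional-coordinate discretization does not dominate the counting, is where most of the technical effort will concentrate.
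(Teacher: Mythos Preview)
Your decoupling plan cannot be completed as stated, and this is a genuine gap rather than a technicality. A Davis--Kahan replacement of $v$ by a row-$r$-independent proxy $v^{(r)}$ with error $o(n^{\gamma'}\delta)$ requires an eigenvalue gap of order at least $\|E_r\|/(n^{\gamma'}\delta)$, where $\|E_r\|\gtrsim n^{1/2}$ is the size of a one-row perturbation; this forces a gap $\gtrsim n^{1/2-\gamma'+B/2}$. Since $B$ is chosen large in terms of $A,\gamma'$, no such gap is available a priori, and indeed the present lemma is an \emph{input} to the eventual gap bound, so invoking gap-dependent perturbation here is circular. Even granting the proxies, the right-hand side $\mathrm{RHS}^{(r)}$ of your row-$r$ equation still depends on $v^{(r)}$, which is a function of all the other rows of $D^*$; the events for distinct $r$ therefore remain dependent and do not multiply. (A smaller point: your invocation of Theorem~\ref{theorem:cilf} is in the wrong direction---the theorem requires a \emph{lower} bound on $\rho$, whereas $\rho_{n^{\gamma'}\delta}(v\downharpoonright_I)\le n^{0.49}q$ is an upper bound.)

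What the paper does instead is to eliminate the decoupling problem by discretizing the \emph{entire} eigenvector, not only $v\downharpoonright_I$. The point you are not exploiting is that $q=\rho_{\delta,\alpha}(v)$ is an \emph{infimum} over segments, so $\rho_\delta(v\downharpoonright_{I_i})\ge q$ for \emph{every} block $I_i$ in a partition of $\{1,\dots,n\}$ into $O(1/\alpha)$ pieces of size $\lfloor\alpha n\rfloor$. Applying Theorem~\ref{theorem:cilf} with this lower bound on each block yields GAP structure on all of $v$ and hence a net of triples $(\tilde v,w',\tilde\lambda)$ of size $n^{-n/2+O(\alpha n)}q^{-n}$, with $\tilde v$ a $\delta$-approximation of $v$, $w'$ a finer approximation of $v'=v\downharpoonright_I$, and $\tilde\lambda$ a discretized eigenvalue. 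One then conditions on $M_{n-k}$; the target $u=(M_{n-k}-\tilde\lambda)\tilde v''$ is now deterministic, the $n-k$ rows $x_i$ of $D^*$ are independent, and each event $|x_i^T w'-u_i|\le n^{\gamma'+1/2}\delta$ has probability at most $n^{0.49}q$ by the upper bound \eqref{perturbation:rory}. Tensorization and a union bound over the net finish the proof. In short, the substitute for your unattainable stability argument is a \emph{second} use of inverse Littlewood--Offord, applied at the coarse scale $\delta$ to every segment of $v$, which produces a deterministic stand-in for $(v'',\lambda)$ and makes the row events honestly independent.
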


This will establish Theorem \ref{theorem:perturbation:poor-tech} in the incompressible case if $\eps$ is chosen small enough.

Our approach to prove Lemma \ref{lemma:perturbation:E'_I} is somewhat similar to \cite{TVcomp}. First, by symmetry we may assume that $I = \{1,\dots,k\}$, where $k := \lfloor \alpha n \rfloor$. Similarly to the previous section, we split

$$ M_n = \begin{pmatrix} M_k & D \\ D^* & M_{n-k} \end{pmatrix}$$

where $M_{k}$, $M_{n-k}$ are the top left $k \times k$ and bottom right $(n-k) \times (n-k)$ minors of $M_n$ respectively, and $D$ is a $k \times (n-k)$ matrix whose entries are (modulo the deterministic part from $F_n$) independent copies of $\xi$, and also independent of $M_k, M_{n-k}$.  We also split $ v^T = (v' , v'')^T $ with $v' \in \R^{k}$ and $v'' \in \R^{n-k}$.

Now suppose that $M_n$ is such that the event $\CE'_I$ holds, the heart of our analysis is to approximate $(v,\lambda)$ by low-entropy structures.

\begin{lemma}\label{lemma:perturbation:approx}  There exists a subset $\CN$ of $\R^n \times \R^k \times \R$ of size $O(n^{-n/2 + O(\alpha n)} q^{-n} )$ such that for any eigenvector-eigenvalue pair $(v,\lambda)$ satisfying $\CE_I'$, there exists $(\tilde v, w', \tilde \lambda)\in \CN$ which well-approximates $(v,\lambda)$ in the following sense
\begin{itemize}
\item (general approximation of eigenvector) $ |v_j - \tilde v_j| \le \delta$, for $1\le j\le n$;
\vskip .1in
\item  (finer approximation over segment) $|v'_j - w'_j| \le n^{-\gamma'-1} \delta$, for $1\le j\le k$;
\vskip .1in
\item (approximation of eigenvalue) $|\lambda - \tilde \lambda| \leq n^{-\gamma'-1} \delta$.
\end{itemize}
\end{lemma}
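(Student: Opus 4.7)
The plan is to build $\CN$ as a product of three separate approximations: a $\delta$-approximation $\tilde v$ of the full eigenvector $v$, a much finer $n^{-\gamma'-1}\delta$-approximation $w'$ of the segment $v' = v \downharpoonright_I$, and an equally fine approximation $\tilde \lambda$ of the eigenvalue $\lambda$. The coarse vector $\tilde v$ is the expensive ingredient and must exploit the structure guaranteed by the inverse Littlewood-Offord theorem (Theorem \ref{theorem:cilf}); the finer objects are cheap, because $w'$ has only $k = \lfloor\alpha n\rfloor = n^{1-\eps}$ entries and $\tilde\lambda$ is a scalar living in an interval of length $O(n^{\gamma'})$, so each can be handled by naive discretization at the target scale and contributes only $n^{O(\alpha n)}$ choices.

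The structural input for $\tilde v$ comes from applying Theorem \ref{theorem:cilf} to the (normalized) restriction $v \downharpoonright_I$. From the event $\CE_I'$ and the infimum definition of $\rho_{\cdot,\alpha}(v)$ we have $\rho_{n^{\gamma'}\delta}(v \downharpoonright_I) \geq \rho_{n^{\gamma'}\delta,\alpha}(v)\geq q\geq n^{-A}$, so the hypothesis $\rho \gg n^{-O(1)}$ is met. This produces a generalized arithmetic progression $Q$ of rank $O(1)$, generators of magnitude $O(1)$, and volume $O(1/(q\sqrt{\alpha n}))$ such that all but at most $\eps \alpha n$ coordinates of $v\downharpoonright_I$ lie within $n^{\gamma'}\delta$ of $Q$. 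To promote this to a $\delta$-level structure valid on every coordinate of $v$, I will run Theorem \ref{theorem:cilf} also on further restrictions of size $\alpha n$ (the infimum definition of $\rho_{\delta,\alpha}$ guarantees the required lower bound $\geq q$ for each such subset) and stitch the resulting GAPs together, choosing the Littlewood-Offord parameter small enough relative to $\alpha$ that the union of exceptional sets still has size $O(\alpha n)$.

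Counting the net: there are $n^{O(1)}$ choices of GAP $Q$ after discretizing its generators at scale $\delta$; there are $\binom{n}{O(\alpha n)} = n^{O(\alpha n)}$ choices of exceptional set $S$; there are $(O(1/\delta))^{O(\alpha n)} = n^{O(\alpha n)}$ choices for the exceptional entries of $\tilde v$ (discretized in $[-1,1]$ at scale $\delta$); and $(O(|Q|))^n = (O(1/(q\sqrt n)))^n = n^{-n/2} q^{-n}$ choices for the structured entries. The fine $w'$-net of $[-1,1]^k$ at scale $n^{-\gamma'-1}\delta$ has size $(n^{O(1)})^k = n^{O(\alpha n)}$, and the fine $\tilde\lambda$-net of $[-O(n^{\gamma'}),O(n^{\gamma'})]$ at the same scale contributes only $n^{O(1)}$. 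Multiplying these bounds gives $|\CN| = O(n^{-n/2 + O(\alpha n)} q^{-n})$, as required.

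The main obstacle I anticipate is globalizing the GAP structure from $v\downharpoonright_I$ to all of $v$ while keeping the exceptional-set size to $O(\alpha n)$: a single invocation of Theorem \ref{theorem:cilf} leaves an exceptional set of size a fixed fraction $\eps n$, which is much larger than $\alpha n = n^{1-\eps}$. Overcoming this will require either repeated application of inverse Littlewood-Offord to overlapping size-$\alpha n$ subsets of $\{1,\ldots,n\}$ combined with a careful covering/patching argument, or a direct use of the eigenvector equation $(M_{n-k}-\lambda)v'' = -D^* v'$ (conditioning on the randomness in the bottom-right block) to express the structure of $v''$ in terms of that of $v'$. Either route has to be accompanied by a delicate calibration of the various small constants so that the combinatorial overhead from the exceptional coordinates and from discretizing the generators of $Q$ is absorbed into the slack term $n^{O(\alpha n)}$ without swamping the leading factor $n^{-n/2}q^{-n}$.
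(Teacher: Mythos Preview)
Your plan is essentially the paper's argument, but two points need correction.

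First, the opening move of applying Theorem~\ref{theorem:cilf} to $v\downharpoonright_I$ at scale $n^{\gamma'}\delta$ is a detour. The specific segment $I$ in $\CE_I'$ is where the small ball probability at scale $n^{\gamma'}\delta$ has an \emph{upper} bound \eqref{perturbation:rory}; that upper bound is used later to control $\P(\CE_{\tilde v,w',\tilde\lambda})$, not to build the net. For the net you want the \emph{lower} bound $\rho_\delta(v\downharpoonright_J)\ge q$, valid for every subset $J$ of size $\lfloor\alpha n\rfloor$ by the definition of $\rho_{\delta,\alpha}$. The paper simply covers $\{1,\dots,n\}$ by $m\le 1/\alpha+1$ intervals $I_1,\dots,I_m$ of size $\approx\alpha n$, applies Theorem~\ref{theorem:cilf} at scale $\delta$ to each $v\downharpoonright_{I_i}$, and uses the union $\bigcup_i Q_i$ of the resulting GAPs as the structured set.

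Second, the obstacle you flag is not one. After the covering step, the exceptional set $S$ (coordinates not $\delta$-close to $\bigcup_i Q_i$) has size $O(\eps' n)$ where $\eps'$ is the constant parameter fed into Theorem~\ref{theorem:cilf}; you do not need $|S|=O(\alpha n)$. For the count, bound the number of choices of $S$ crudely by $2^n$ and the number of exceptional-entry tuples by $(O(1/q\delta))^{|S|}=n^{O(\eps' n)}$. Since $2^n=n^{O(n/\log n)}$ and $\eps'$ may be chosen as small as one likes, both are harmless against the leading $n^{-n/2}$. (The paper's own bookkeeping is loose here: the $O(\alpha n)$ in the lemma statement should be read as $O(\eps n)$, which is exactly what appears downstream in Lemma~\ref{lemma:perturbation:E'_I}.) Similarly, your structured-entry count should be $\bigl(O(m\cdot\max_i\operatorname{vol}(Q_i))\bigr)^n=\bigl(O(\alpha^{-3/2}n^{-1/2}/q)\bigr)^n$ rather than $\bigl(O(1/(q\sqrt n))\bigr)^n$, but the missing $\alpha^{-3/2}$ factor only contributes $n^{O(\eps n)}$. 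With these adjustments the count goes through directly, and neither an overlapping-cover patching argument nor any appeal to the eigenvector equation is needed.
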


We also refer the reader to Lemma \ref{lemma:Wigner:structure:1} in Appendix \ref{section:Wigner:key} for similarities.

\begin{proof}[Proof of Lemma \ref{lemma:perturbation:approx}]
 We cover $\{1,\dots,n\}$ by sets $I_1,\dots,I_m$ of of length differing by at most one, with 
 
 $$m \leq \left\lfloor \frac{1}{\alpha} \right\rfloor + 1 = O(n^{\eps}).$$  
 
 Because $\rho_\delta(v)=q$, for each $i=1,\dots,m$ we have

$$ \rho_\delta( v \downharpoonright_{I_i} ) \ge q .$$

Applying Theorem \ref{theorem:cilf} and the incompressibility hypothesis $q < (\alpha n)^{-1/2+\eps}$, we may thus find, for each $i=1\dots,m$, a GAP $Q_i$ such that

\begin{equation}\label{perturbation:volq}
 \operatorname{vol}(Q_i) \ll (\alpha n)^{-1/2+\eps} / q
\end{equation}

and rank $r_i = O(1)$ such that all but at most $O(\eps^2 n)$ of the coefficients of $v \downharpoonright_{I_i}$ lie within a distance $O(\delta)$ of $Q_i$. Thus we have

$$ \operatorname{dist}( v_j, \bigcup_{i=1}^m Q_i ) \ll \delta$$

for all $j=1,\dots,n$ outside of an exceptional set $S \subset \{1,\dots,n\}$ of cardinality $|S| \ll \eps n$.  

Furthermore, all the generators $w_{i,1},\dots,w_{i,r_i}$ of $Q_i$ have magnitude $O(1)$.  From \eqref{perturbation:volq} we have the crude bound for the dimensions of $Q_i$

$$ N_{i,1},\dots,N_{i,r_i} \ll (\alpha n)^{-1/2+\eps}/q.$$

From this, we may round each generator $w_{i,l}$ to the integer nearest multiple of $q\delta$ (say) without loss of generality, since this only moves the elements of $Q_i$ by $O( (\alpha n)^{-1/2+\eps} \delta ) = O(\delta)$ at most.  In particular, all elements of $\bigcup_{i=1}^m Q_i$ are now integer multiples of $q\delta$.

Thanks to this information, one can create a coarse ``discretized approximation'' $\tilde v = (\tilde v_1,\dots,\tilde v_n)$ to $v$, by setting $\tilde v_j$ for $j=1,\dots,n$ to be the nearest element of $\bigcup_{i=1}^m Q_i$ to $v_j$ if $v_j$ lies within $O(\delta)$ of this set, or the nearest multiple of $q\delta$ to $v_j$ otherwise.  Then $\tilde v_j$ consists entirely of multiples of $q\delta$, one has

\begin{equation}\label{perturbation:vob}
 |v_j - \tilde v_j| \le \delta
\end{equation}

for all $j=1,\dots,n$, and all but at most $O(\alpha n)$ of the coefficients of $\tilde v$ lie in $\bigcup_{i=1}^m Q_i$.  

We will also need a finer approximation $w' = (w_1,\dots,w_k)^T$ to the component $v' =(v_1,\dots,v_k)^T$ of $v$, by choosing $w_j$ to be the nearest integer multiple of $n^{-\gamma'} \delta$ to $v_j$, thus for all $j=1,\dots,k$. 

\begin{equation}\label{perturbation:wob}
 |v'_j - w'_j| \le n^{-\gamma'} \delta.
\end{equation}

Similarly, one approximates the eigenvector $\lambda$ by the nearest multiple $\tilde \lambda$ of $n^{-\gamma'-1} \delta$, thus

\begin{equation}\label{perturbation:lob}
|\lambda - \tilde \lambda| \leq n^{-\gamma'-1} \delta.
\end{equation}



We now claim that the data $\tilde v$, $w'$, $\tilde \lambda$ have low entropy, in the sense that they take a relatively small number of values.  Indeed, the number of possible ranks $r_1,\dots,r_m$ of the $Q_i$ is $(O(1))^m =(O(1))^{1/\alpha}$.  Once the ranks are chosen, the number of ways we can choose the generators $w_{i,l}$ of the $Q_i$ (which are all multiples of $q\delta$ of magnitude $O(1)$) are

$$ O\big(( 1/q\delta )^{\sum_{i=1}^m r_i}\big) = O( n^{O(1/\alpha)} ) = O(n^{O(n^\eps)})$$

since $q, \delta \geq n^{-O(1)}$ by hypothesis.  

The number of sets $S$ may be crudely bounded by $2^n$.  For each $j \in S$, the number of choices for $\tilde v_j$ is $O(1/q\delta)$, leading to $O((1/q\delta)^{|S|}) = O((1/q\delta)^{O(\alpha n)})$ ways to choose this portion of $\tilde v$.  For $j \not \in S$, $\tilde v_j$ lies in $\bigcup_{i=1}^m Q_i$, which has cardinality $O( m \operatorname{vol}(Q) ) = O( \frac{1}{\alpha} n^{-1/2+\alpha} / q )$.  Finally, for the $k = \lfloor \alpha n \rfloor$ coefficients of $w'$ there are at most $O( n^{\gamma'+1}/\delta )$ choices, and there are similarly $O(n^{\gamma'+1}/\delta)$ choices for $\tilde \lambda$.

Thus the total possible number of quadruples\footnote{One could also expand this set of data by also adding in the GAPs $Q_1,\dots,Q_m$ and the set $S$, but we will have no need of this additional data in the rest of the argument.} $(\tilde v,w',\tilde \lambda)$ is at most

\begin{equation}\label{note}
 (O(1))^{1/\alpha} \times O(n^{O(1/\alpha)} ) \times 2^n \times O\big((1/q\delta)^{O(\alpha n)}\big) \times O\big(( \frac{1}{\alpha} n^{-1/2+\alpha}/q )^n\big) \times O\big(( n^{\gamma'+1}/q )^{\lfloor \alpha n \rfloor}\big) \times O(n^{100}/q )^2,
\end{equation}

which simplifies to

$$ O( n^{-n/2 + O(\alpha n)} q^{-n} ).$$

Note that many of the factors in the previous expression \eqref{note} can be absorbed into the $O(n^{O(\alpha n)})$ error as $\eps$ is chosen sufficiently small.

\end{proof}

Now we assume that $(v,\lambda)$ is an eigenvector-eigenvalue pair satisfying $\CE_I'$, which can be approximated by tuple $(\tilde v, w', \tilde \lambda)\in \CN$ as in Lemma \ref{lemma:perturbation:approx}. The lower $n-k$-dimensional component of the eigenvector equation $M_n v = \lambda v$ then reads as

$$ D^* v' + (M_{n-k}-\lambda) v'' = 0.$$

From \eqref{perturbation:wob} and  \eqref{perturbation:lob} we certainly have

$$ \| (\tilde \lambda - \lambda) v'' \| \ll \delta \mbox{ and } \| D^* (v' - w' ) \| \ll \delta .$$
  
By \eqref{perturbation:vob}, we have

$$ \| v'' - \tilde v'' \| \leq n^{\frac{1}{2}} \delta$$

where $\tilde v''$ is the lower $n-k$ entries of $\tilde v$.  

Since $\lambda$ has order $O(n^{\gamma'})$, we have $\tilde \lambda = O(n^{\gamma'})$, and so 

$$\|\tilde \lambda(v'' - \tilde v'')\| \le n^{\gamma' +\frac{1}{2}}\delta.$$

Hence, with $u= (M_{n-k}-\tilde \lambda) \tilde v''$ independent of $D$, 

\begin{align}\label{perturbation:ineq}
 \|D^* w' -u\|& = \| D^* w' - (M_{n-k}-\tilde \lambda) \tilde v''  \| \nonumber \\ 
 &= \| D^* (w'-v') + D^*v' + (M_{n-k}-\lambda) v'' +   (M_{n-k}-\lambda) (\tilde v''-v'') + (\lambda -\tilde \lambda) \tilde v''\| \nonumber \\
 & \le n^{\gamma'+\frac{1}{2}} \delta.
\end{align}

Let us now condition $M_{n-k}$ to be fixed, so that $u$ is deterministic.  Let $x_1,\dots, x_{n-k} \in \R^k$ denote the rows of $D$; then the $x_1,\dots,x_{n-k}$ are independent vectors, each of whose elements (up to a deterministic part from $F_n$) is an independent copy of $\xi$.  The bound \eqref{perturbation:ineq} then can be rewritten as

\begin{equation}\label{perturbation:ineq-2}
 \sum_{i=1}^{n-k} |x_i^T w' - u_i|^2 \le n^{2\gamma'+1} \delta^2
\end{equation}
where $u_1,\dots,u_{n-k} \in \R$ are the coefficients of $u$.

In summary, if we let $\CE_{\tilde v, w', \tilde \lambda}$ be the event that the above situation holds for a \emph{given} choice of $\tilde v, w', \tilde \lambda$, it will suffice by the union bound (taking into account the cardinality of $\CN$ from Lemma \ref{lemma:perturbation:approx}) to establish the following upper bound

\begin{lemma}\label{lemma:tildeE} For any $(\tilde v, w', \tilde \lambda)\in \CN$,
\begin{equation}\label{perturbation:flong}
\P(\CE_{\tilde v, w', \tilde \lambda}) =O(( n^{.49 +O(\alpha)}q)^n).
\end{equation}
\end{lemma}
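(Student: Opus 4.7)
The plan is to condition on the sub-matrix $M_{n-k}$, exploit the resulting independence of the rows $x_1,\dots,x_{n-k}\in\R^k$ of $D^*$, and then convert the $\ell^2$-event $\sum_i |x_i^Tw'-u_i|^2\le n^{2\gamma'+1}\delta^2$ into a product of independent one-dimensional small-ball estimates via an exponential-moment (Laplace transform) bound.

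\emph{Step 1.} Condition on $M_{n-k}$, so that $u:=(M_{n-k}-\tilde\lambda)\tilde v''$ is deterministic and the $x_i$ remain independent. Set $\beta:=n^{\gamma'}\delta$ and $a_i:=x_i^Tw'-u_i$. The standard exponential Chebyshev trick then yields
\[
\P\Big(\sum_{i=1}^{n-k}a_i^2\le n\beta^2\Big)\le e^n\,\E\big[e^{-\sum_{i=1}^{n-k} a_i^2/\beta^2}\big]=e^n\prod_{i=1}^{n-k}\E\big[e^{-a_i^2/\beta^2}\big].
\]

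\emph{Step 2.} Bound each factor by a small-ball probability. Slicing the real line into intervals of length $\beta$,
\[
\E\big[e^{-a_i^2/\beta^2}\big]\le\sum_{m\in\Z}e^{-m^2}\sup_{y\in\R}\P(y\le a_i<y+\beta)\ll\rho_\beta(w').
\]
To transfer the hypothesis $\rho_{n^{\gamma'}\delta}(v')\le n^{.49}q$ from \eqref{perturbation:rory} to $\rho_\beta(w')$, I use the finer approximation $|v_j'-w_j'|\le n^{-\gamma'-1}\delta$ from Lemma \ref{lemma:perturbation:approx}, which gives $\|v'-w'\|\le n^{-\gamma'-1/2}\delta$. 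Chebyshev applied to $\xi^T(v'-w')$, whose variance is at most $\|v'-w'\|^2\le n^{-2\gamma'-1}\delta^2\ll\beta^2$, then shows that $|\xi^T(v'-w')|\le\beta$ outside a polynomially small event. Combined with the trivial doubling $\rho_{2\beta}(v')\le 2\rho_\beta(v')$, this gives $\rho_\beta(w')\ll n^{.49}q$.

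\emph{Step 3.} Assemble. The above yields $\P(\CE_{\tilde v,w',\tilde\lambda})\ll e^n\cdot(O(n^{.49}q))^{n-k}$. Writing $n-k=(1-\alpha)n$, the loss compared to the target $(n^{.49}q)^n$ is $e^n\cdot(n^{.49}q)^{-\alpha n}\cdot O(1)^n$. In the non-trivial regime $\rho_{n^{\gamma'}\delta,\alpha}(v)\ge n^{-A}$ forces $q\ge n^{-O(A)}$, so $q^{-\alpha n}\le n^{O(\alpha n)}$. The choice $\alpha=n^{-\eps}$ from \eqref{eqn:eps} gives $\alpha\log n\gg 1$, whence $e^n\le n^{O(\alpha n)}$. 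Both losses are therefore absorbed into the permitted $n^{O(\alpha)n}$ factor, yielding $\P(\CE_{\tilde v,w',\tilde\lambda})\ll (n^{.49+O(\alpha)}q)^n$, as required.

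The main obstacle is \emph{Step 2}: transferring the small-ball control from $v'$, on which we have information through the eigenvector equation, to its discretised companion $w'$. This is precisely the reason that Lemma \ref{lemma:perturbation:approx} discretises $v'$ at the unusually fine scale $n^{-\gamma'-1}\delta$ rather than at the ambient scale $\delta$ used for the other coordinates of $\tilde v$: we need $\|v'-w'\|$ to be negligible compared with $\beta=n^{\gamma'}\delta$, otherwise the small-ball bound on $v'$ cannot be transferred to $w'$. A secondary bookkeeping obstacle is absorbing both the $e^n$ from the exponential Markov step and the $q^{-\alpha n}$ from the $k$-dimensional volume loss into the $n^{O(\alpha)n}$ error, which is the reason the parameter $\alpha=n^{-\eps}$ must be chosen with $\eps$ small compared to $A$ and $\gamma'$.
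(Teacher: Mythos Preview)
Your approach is essentially the paper's: the tensorization lemma you are re-proving inline (exponential Chebyshev plus layer-cake slicing) is exactly Lemma~\ref{lemma:tensorization}, which the paper invokes as a black box after the same transfer from $v'$ to $w'$.

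There is, however, a genuine slip in Step~3. You assert that ``the choice $\alpha=n^{-\eps}$ gives $\alpha\log n\gg 1$, whence $e^n\le n^{O(\alpha n)}$.'' This is false: with $\alpha=n^{-\eps}$ one has $\alpha\log n = n^{-\eps}\log n \to 0$, not $\gg 1$, and consequently $e^n \le n^{C\alpha n}$ fails for every fixed $C$. The repair is easy but different from what you wrote. Split $e^n=e^{n-k}\cdot e^{k}$; the first factor merges with your product to give $(eC\rho_\beta(w'))^{n-k}=(O(n^{.49}q))^{n-k}$, which is precisely the paper's tensorization output. The residual $e^{k}\le e^{\alpha n}\le n^{\alpha n}$ (valid once $n\ge 3$) is then absorbed into the permitted $n^{O(\alpha)n}$, together with the $q^{-\alpha n}\le n^{A\alpha n}$ loss you already identified. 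In short, the $e^n$ is harmless because $e$ is an absolute constant entering a $(\cdot)^{n-k}$ product, not because of any relation between $\alpha$ and $\log n$.

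A minor second point: your Chebyshev transfer in Step~2 yields $\rho_\beta(w')\le 2\rho_\beta(v')+O(n^{-4\gamma'-1})$, and you then write $\rho_\beta(w')\ll n^{.49}q$. This implicitly assumes the Chebyshev tail $n^{-4\gamma'-1}$ is dominated by $n^{.49}q\ge n^{.49-A}$, which can fail for large $A$. The paper's own one-line transfer is equally informal here; the clean fix (for both) is to take the discretisation scale for $w'$ in Lemma~\ref{lemma:perturbation:approx} to be $n^{-C}\delta$ with $C$ large depending on $A$, which only inflates $|\CN|$ by another harmless $n^{O(\alpha n)}$.
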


To justify \eqref{perturbation:flong}, we first recall from \eqref{perturbation:rory} that $\rho_{n^{\gamma'} \delta}(v' ) \leq n^{.49} q$. From \eqref{perturbation:wob}, the random walks (with the $x_i$) associated to $v'$ and $w'$ differ by at most $O(n^{-\gamma'+\frac{1}{2}} \delta)$, and so
$$
 \rho_{n^{\gamma'} \delta/2}(w' ) \leq n^{.49}q.
$$

We now invoke the following tensorization trick (which is not strictly necessary here, but will be useful later).

\begin{lemma}\cite[Lemma 2.2]{rv}\label{lemma:tensorization}  Let $\zeta_1,\dots,\zeta_n$ be independent non-negative radom variables, and let $K, t_0 > 0$.  If one has
$$ \P( \zeta_k < t ) \leq K t$$
for all $k=1,\dots,n$ and all $t \geq t_0$, then one has
$$ \P( \sum_{k=1}^n \zeta_k^2 < t^2 n ) \leq O((K t)^n)$$
for all $t \geq t_0$.
\end{lemma}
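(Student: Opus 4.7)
\textbf{Proof proposal for Lemma \ref{lemma:tensorization}.} The plan is to convert the lower-tail bound on the sum $\sum_{k=1}^n \zeta_k^2$ into an exponential moment bound via Markov's inequality, then exploit independence. Specifically, I would start by writing
$$\P\Bigl(\sum_{k=1}^n \zeta_k^2 < t^2 n\Bigr) = \P\Bigl(e^{-\sum_k \zeta_k^2/t^2} > e^{-n}\Bigr) \le e^n\, \E\, e^{-\sum_k \zeta_k^2/t^2} = e^n \prod_{k=1}^n \E\, e^{-\zeta_k^2/t^2},$$
where the factorization uses the independence of the $\zeta_k$. So the task reduces to bounding each factor $\E\, e^{-\zeta_k^2/t^2}$ by $O(Kt)$; then the product is at most $(O(Kt))^n$ and absorbing the $e^n$ into the constant gives the desired estimate. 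Note that the claim is only informative when $Kt$ is small (otherwise the conclusion is trivial), so we may assume $Kt$ is bounded by an absolute small constant.

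To bound a single factor, I would use the layer-cake identity
$$\E\, e^{-\zeta_k^2/t^2} = \int_0^\infty \frac{2r}{t^2}\, e^{-r^2/t^2}\, \P(\zeta_k < r)\, dr,$$
split the integral at $r=t_0$, and apply the hypothesis on each piece. For $r \ge t_0$ we have $\P(\zeta_k < r) \le Kr$ directly; for $r < t_0$, monotonicity of the cumulative distribution gives $\P(\zeta_k<r) \le \P(\zeta_k<t_0)\le Kt_0\le Kt$ (using $t\ge t_0$). Substituting and evaluating the elementary Gaussian moments (a change of variable $u=r/t$ reduces everything to $\int_0^\infty 2u^2 e^{-u^2}\,du=\sqrt{\pi}/2$) yields $\E\, e^{-\zeta_k^2/t^2} \le Kt_0 + O(Kt) = O(Kt)$.

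Combining these two steps gives the claimed bound. The only mildly delicate point is the handling of the $r < t_0$ region, where the hypothesis does not hold directly; this is where the restriction $t \ge t_0$ becomes essential and is the reason the lemma is stated with a threshold. Everything else is a routine application of Markov plus independence, so I do not anticipate any serious obstacle beyond keeping track of constants.
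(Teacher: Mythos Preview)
Your proposal is correct and is exactly the standard argument from \cite[Lemma~2.2]{rv}: Markov's inequality on the exponential $e^{-\sum_k \zeta_k^2/t^2}$, factorization by independence, and a layer-cake estimate for each factor, with the split at $r=t_0$ handling the threshold. The present paper does not supply its own proof of this lemma (it is quoted from \cite{rv}), so there is nothing further to compare.
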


By setting $t= n^{\gamma'} \delta/2$, it follows crudely from Lemma \ref{lemma:tensorization} that
$$ \P\Big( \sum_{i=1}^{n-k} |x_i^T  w' - v_i|^2 =O(n^{2\gamma'+1} \delta^2) \Big) < \P\Big( \sum_{i=1}^{n-k} |x_i^T w' - v_i|^2 =o( nt^2) \Big)  = O\big(( n^{.49} q )^{n-k}\big),$$
where as before we are conditioning on $M_{n-k}$ being fixed.  

Undoing the conditioning, we conclude that \eqref{perturbation:ineq-2}, and hence \eqref{perturbation:ineq}, occurs with
probability $O(( n^{.49} q )^{n-k}) = O(( n^{.49+O(\alpha)} q )^{n})$, and thus \eqref{perturbation:flong} follows.  This concludes the proof of Theorem \ref{theorem:perturbation:poor-tech} in the incompressible case.

Finally, the conditions for $B$ from \eqref{eqn:A:1} and \eqref{eqn:A:2} can be secured by choosing

\begin{equation}\label{eqn:A}
B> 5A\gamma'+ 2\gamma'+2.
\end{equation}

\section{Consecutive gaps for Erd\H{o}s-R\'enyi graphs: proof of Theorem \ref{theorem:main:ER}}\label{section:ER}

By modifying the treatment of Section \ref{section:perturbation} and using \eqref{badevent1}, we will show the following.

\begin{theorem}[Most eigenvectors poor]\label{theorem:ER:poor}   Let $A > 0$ and $\sigma>0$ be fixed.  Then, with probability at least $1 - O(\exp(-\alpha_0 n))$ for some positive constant $\alpha_0>0$ (which may depend on $A,\sigma$), every unit eigenvector $v$ of $A_n$ with eigenvalue $\lambda$ in the interval $[-10\sqrt{n}, 10\sqrt{n}]$ obeys the concentration estimate
$$ \rho_{\delta}(v) \leq n^\sigma \delta$$
for all $\delta > n^{-A}$.
\end{theorem}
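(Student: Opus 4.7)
The plan is to adapt the two-scale strategy of Theorem \ref{theorem:perturbation:poor-tech} in Section \ref{section:perturbation}, tightening the entropy count to obtain the near-linear bound $n^\sigma\delta$ rather than merely the polynomial bound $n^{-A}$. Setting $\alpha := n^{-\eps}$ for a small constant $\eps \ll \sigma$ and working with the segmental probability $\rho_{\delta,\alpha}$ from Section \ref{section:perturbation}, I would reduce Theorem \ref{theorem:ER:poor} to showing that, with probability $1 - \exp(-\Theta(n))$, no unit eigenvector $v$ of $A_n$ with bulk eigenvalue $\lambda \in [-10\sqrt n, 10\sqrt n]$ admits two nearby radii $\delta < \delta' = n^\eps\delta$ satisfying
\[
n^\sigma \delta \;\le\; \rho_{\delta',\alpha}(v) \;\le\; n^{0.49\eps}\,\rho_{\delta,\alpha}(v).
\]
The reduction to such a two-scale event is by a telescoping pigeonhole: if $\rho_\delta(v) > n^\sigma\delta$ for some $\delta > n^{-A}$, then walking up the scales $\delta_j := n^{j\eps}\delta$ until $\delta_J \ge 1$, the ratio $\rho_{\delta_j,\alpha}(v)/\delta_j$ must drop from $> n^\sigma$ at $j = 0$ to $\le 1$ at $j = J$, forcing some step where $\rho_{\delta_{j+1},\alpha}(v) \le n^{0.49\eps}\rho_{\delta_j,\alpha}(v)$, provided $\eps$ is chosen small enough in terms of $\sigma/A$.

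Next I would split into the compressible and incompressible cases as in Subsections \ref{subsection:perturbation:compressible} and \ref{subsection:perturbation:incompressible}. In the compressible regime $\rho_{\delta',\alpha}(v) \ge (\alpha n)^{-1/2 + \eps}$, Theorem \ref{theorem:cilf-erdos} applied on each length-$\alpha n$ segment of $v$ shows that all but $O(\eps n)$ coordinates of $v$ have magnitude at most $\delta'$, so $v$ is essentially concentrated on an exceptional set of size $O(\eps\alpha n)$. The contradiction then comes from the block decomposition $A_n = \bigl(\begin{smallmatrix} M_{n'} & D \\ D^* & M_{n-n'}\end{smallmatrix}\bigr)$: conditioning on $M_{n'}$ and using $\inf_{\|w\|=1}\|Dw\| \gg n^{-1/2}$ (valid by a standard $\eps$-net argument for the Bernoulli block $D$ after centering by its deterministic mean from $F_n$), one contradicts $\|Dv''\| = O(n^{\gamma'} \delta' n^{1/2})$ whenever $B \gtrsim \gamma' + 1$, where the effective norm parameter $\gamma' = 1$ is controlled by $\|A_n\| = O(n)$.

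The incompressible case $\rho_{\delta',\alpha}(v) < (\alpha n)^{-1/2 + \eps}$ is where the quantitative improvement enters. I would apply Theorem \ref{theorem:cilf} on an $\alpha$-cover of $[n]$ to find GAPs $Q_1, \dots, Q_m$ of volume $O(n^{-1/2 + O(\eps)}/\rho_{\delta',\alpha}(v))$, round the generators to lattice spacing of order $\rho_{\delta',\alpha}(v)\delta'$, and build a low-entropy net $\CN$ of triples $(\tilde v, w', \tilde\lambda)$ exactly as in Lemma \ref{lemma:perturbation:approx}, of total size $|\CN| = O\bigl(n^{-n/2 + O(\eps n)}\rho_{\delta',\alpha}(v)^{-n}\bigr)$. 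For each fixed element of $\CN$, the bottom block of the eigenvector equation becomes $\|D^* w' - u\| = O(n^{\gamma' + 1/2}\delta')$ where $u = (M_{n-k} - \tilde\lambda)\tilde v''$ depends only on the independent minor $M_{n-k}$; combining Lemma \ref{lemma:tensorization} with the two-scale hypothesis $\rho_{\delta',\alpha}(w') \le n^{0.49\eps}\rho_{\delta,\alpha}(w')$ gives a per-element probability $O\bigl((n^{0.49\eps}\rho_{\delta',\alpha}(v))^{n-k}\bigr)$. Multiplying by $|\CN|$, by the $2^n$ choices of distinguished segment, and summing over the $O(\log n)$ dyadic scales yields the desired exponential bound $O(\exp(-\alpha_0 n))$.

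The principal obstacle I anticipate is ensuring the two-scale gain $n^{0.49\eps}$ is genuinely strictly smaller than the trivial growth $n^\eps$ uniformly across the $J = O(A/\eps)$ dyadic scales while the per-step gain is still usable in the tensorization step; this is a numerological balance forcing $\eps \sim \sigma/A$, with the constant $\alpha_0$ in the final bound degrading accordingly. A secondary technical issue is that the ER adjacency matrix has spectral norm $\Theta(n)$ rather than $\Theta(\sqrt n)$ due to the rank-one contribution from $F_n = p(J_n - I_n)$; however, any eigenvector $v$ whose eigenvalue lies in the bulk $[-10\sqrt n, 10\sqrt n]$ must be nearly orthogonal to the Perron direction $\mathbf{1}/\sqrt n$, so after projecting that direction away the effective operator norm reduces to $O(\sqrt n)$ and the analysis proceeds in a Wigner-like setting with $\gamma' = 1/2$, matching the sharper bound obtained in Theorem \ref{theorem:main:Wigner}.
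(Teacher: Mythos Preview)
Your telescoping does not close. With scale ratio $\delta'/\delta = n^\eps$ and per-step gain $n^{0.49\eps}$, the ratio $\rho_{\delta_j,\alpha}(v)/\delta_j$ drops by at most a factor $n^{0.51\eps}$ at each step; since you need $J \approx A/\eps$ steps to walk from $\delta \ge n^{-A}$ up to $\delta_J \ge 1$, the accumulated drop is $n^{0.51\eps J} \approx n^{0.51 A}$, which is \emph{independent of $\eps$} and swamps the initial margin $n^\sigma$ whenever $\sigma < 0.51A$. So the pigeonhole fails to produce a bad pair of scales in general, and no choice of $\eps$ rescues it. There is a second, related gap in the incompressible case: the approximation error in the block equation forces $\|D^* w' - u\| = O(n^{\gamma'+1/2}\delta)$ with $\gamma' \ge 1/2$ (even after subtracting the rank-one mean), so the tensorization via Lemma~\ref{lemma:tensorization} must be run at scale $t \sim n^{1/2}\delta$. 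Your two-scale hypothesis only controls $\rho_{n^\eps\delta,\alpha}(v)$, which says nothing at scale $n^{1/2}\delta$; the claimed per-element bound $(n^{0.49\eps}q)^{n-k}$ is therefore unjustified.

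The paper fixes both issues simultaneously by taking $\alpha$ to be a small \emph{constant} (explicitly abandoning the choice $\alpha = n^{-\eps}$ of Section~\ref{section:perturbation}), setting the scale ratio to $n^{1/2+\alpha}$, and sharpening the per-step gain from $n^{0.49}$ to $n^{1/2-\sqrt\alpha}$; see Theorem~\ref{theorem:ER:poor:tech}. The drop in $\rho/\delta$ per step is then only $n^{\sqrt\alpha+\alpha}$, and over $J = \lfloor 2A \rfloor$ steps this is $n^{O(A\sqrt\alpha)}$, which is $\le n^{\sigma/2}$ once $\alpha$ is small in terms of $\sigma/A$. The scale ratio $n^{1/2+\alpha}$ now matches the tensorization scale, and the sharpened gain $n^{1/2-\sqrt\alpha}$ feeds into the improved entropy estimate $\P(\CE'_I) \ll n^{-\sqrt\alpha n + O(\alpha n)}$ of Lemma~\ref{lemma:ER:E'_I}, which beats the crude $2^n$ count of segments because $\sqrt\alpha \gg \alpha$ for small constant $\alpha$. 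Finally, the rank-one mean is handled not by projecting onto the Perron complement but by subtracting $p\mathbf 1\mathbf 1^T$ from the blocks and absorbing the resulting scalar into an extra infimum $\inf_a \|\tilde D w - a\mathbf 1\|$ in the compressible case; see \eqref{ER:bojo}--\eqref{ER:dvin}.
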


In fact, we shall reduce Theorem \ref{theorem:ER:poor} to

\begin{theorem}\label{theorem:ER:poor:tech}  Let $\sigma > 0$ be fixed, and let $\delta \geq n^{-A}$ and $q \geq n^\sigma \delta$.  Let $\alpha>0$ be a sufficiently small fixed quantity.  Then with probability at least $1-O(\exp(-\alpha_0n))$ for some positive constant $\alpha_0>0$, every unit eigenvector $v$ of $A_n$ with an eigenvalue $\lambda$ in the interval $[-10\sqrt{n}, 10\sqrt{n}]$, one either has
$$ \rho_{\delta,\alpha}(v) \leq q$$
or
\begin{equation}\label{ER:dance}
 \rho_{n^{1/2+\alpha} \delta,\alpha}(v) > n^{1/2-\sqrt{\alpha}} q.
\end{equation}
\end{theorem}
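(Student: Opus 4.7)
The plan is to mirror the proof of Theorem \ref{theorem:perturbation:poor-tech} in Section \ref{section:perturbation}, exploiting two features of the adjacency model. First, the centered matrix $X_n := A_n - p(J_n - I_n)$ satisfies $\|X_n\| = O(\sqrt n)$ with probability $1 - \exp(-\Theta(n))$, so the role of $n^{\gamma'}$ in Section \ref{section:perturbation} is played by $n^{1/2}$, matching the scale $n^{1/2+\alpha}$ appearing in \eqref{ER:dance}. Second, a more careful accounting in the tensorization step will push the exponent $n^{0.49}$ of Lemma \ref{lemma:tildeE} up to the near-optimal $n^{1/2-\sqrt\alpha}$. Assume for contradiction that some unit eigenvector $v$ of $A_n$ with $\lambda \in [-10\sqrt n, 10\sqrt n]$ violates both halves of the dichotomy, and split on the size of $\rho_{\delta,\alpha}(v)$ relative to the threshold $(\alpha n)^{-1/2+\eps}$ for some small $\eps$ depending on $A$ and $\sigma$.

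In the compressible regime $\rho_{\delta,\alpha}(v) \ge (\alpha n)^{-1/2+\eps}$, Theorem \ref{theorem:cilf-erdos} applied on each of $O(1/\alpha)$ segments of length $\lfloor\alpha n\rfloor$ forces $|v_i|\le \delta$ outside an exceptional set $S$ of cardinality $O(\eps\alpha n)$. Splitting $A_n$ with $S$ in the bottom block and using $\|X_n\|=O(\sqrt n)$ reduces the eigenvector equation \eqref{perturbation:kitty} to $\|Dv''\|\ll \sqrt n\cdot\delta\sqrt n$, which contradicts the standard $\epsilon$-net lower bound $\inf_{\|w\|=1}\|Dw\|\gg n^{-1/2}$ once $A$ is sufficiently large; the failure probability is summable over the $n^{O(\eps\alpha n)}$ choices of $S$.

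In the incompressible range $q < \rho_{\delta,\alpha}(v) < (\alpha n)^{-1/2+\eps}$, construct a low-entropy approximation $(\tilde v, w', \tilde\lambda)$ as in Lemma \ref{lemma:perturbation:approx}: apply Theorem \ref{theorem:cilf} on each segment to obtain GAPs of volume $O((\alpha n)^{-1/2+\eps}/\rho_{\delta,\alpha}(v))$ with generators rounded to multiples of $\rho_{\delta,\alpha}(v)\,\delta$, approximate $v$ globally by $\tilde v$ at scale $\delta$, the distinguished block $v'$ of length $k=\lfloor\alpha n\rfloor$ by $w'$ at scale $n^{-3/2}\delta$, and $\lambda$ by $\tilde\lambda$ at the same fine scale. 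Taking $\gamma'=1/2$, the counting bound becomes $|\CN| = O\bigl(n^{-n/2 + O(\alpha n)}\,\rho_{\delta,\alpha}(v)^{-n}\bigr)$. Conditioning on the top $k\times k$ block, the last $n-k$ rows of the eigenvector equation together with the hypothesis \eqref{ER:dance} (which transfers to $\rho_{n^{1/2+\alpha}\delta/2}(w') \le n^{1/2-\sqrt\alpha}q$ modulo the $n^{-3/2}\delta$ rounding losses) combine with Lemma \ref{lemma:tensorization} to bound the conditional probability by $O\bigl((n^{1/2-\sqrt\alpha}q)^{n-k}\bigr)$. Multiplying by $|\CN|$ and cancelling $q^{-n}$ against $q^{n-k}$ via $\rho_{\delta,\alpha}(v)>q$ leaves the exponential factor $(n^{-\sqrt\alpha + O(\alpha)})^n$, which is $\exp(-\alpha_0 n)$ for $\alpha$ sufficiently small in terms of $\sigma$; a union bound over the $O(n^2)$ multiples of $n^{-3/2}\delta$ in $[-10\sqrt n, 10\sqrt n]$ for $\tilde\lambda$ is absorbed.

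The main obstacle is producing the factor $n^{1/2-\sqrt\alpha}$ cleanly in place of the cruder $n^{0.49}$ of Lemma \ref{lemma:tildeE}: one must verify that the rounding errors $|v-\tilde v|\le \delta$ and $|v'-w'|\le n^{-3/2}\delta$, applied to a unit vector and an operator of norm $O(\sqrt n)$, perturb the small-ball probability measured at the coarser scale $n^{1/2+\alpha}\delta$ by only a multiplicative factor of $n^{o(1)}$. The $n^\alpha$ buffer between the scale ratio $n^{1/2+\alpha}$ and the spectral norm $n^{1/2}$ is exactly what makes this possible — had $\|X_n\|$ been any larger, or the scale ratio any smaller, this step would fail, and this is precisely the feature that allows the sharper bound for $A_n$ compared with the general perturbation model.
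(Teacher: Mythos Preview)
Your overall architecture matches the paper's, but there is a genuine gap: you never deal with the rank-one mean of the adjacency matrix. You repeatedly invoke the bound $\|X_n\|=O(\sqrt n)$ for $X_n=A_n-p(J_n-I_n)$, but then apply it as if the blocks of $A_n$ itself had operator norm $O(\sqrt n)$. They do not: $\|A_{n'}\|$ and $\|A_{n-k}-\tilde\lambda\|$ are of order $n$, because of the $pJ$ component. In the compressible case this breaks your chain ``$\|Dv''\|\ll\sqrt n\cdot\delta\sqrt n$'': from $(A_{n'}-\lambda)v'+Dv''=0$ one only gets $\|Dv''\|\le\|A_{n'}-\lambda\|\,\|v'\|=O(n)\cdot\delta\sqrt n$, which is a factor $\sqrt n$ too large to contradict any singular-value bound at the scale you need. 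The paper fixes this by centering $D$ to $\tilde D$ and observing that all the rank-one contributions land in the $\mathbf 1$-direction, so one obtains $\|\tilde D v''-a\mathbf 1_{n'}\|=o(\sqrt n)$ for some scalar $a$; one then needs the \emph{modified} least-singular-value bound $\inf_{\|w\|=1}\inf_{a}\|\tilde D w-a\mathbf 1\|\gg\sqrt n$, which the paper establishes by the row-differencing trick (subtract paired rows to kill $a\mathbf 1$, then cite the rectangular bound). Your proposal has neither the $a\mathbf 1$ reduction nor this device. Incidentally, the bound you quote, $\inf_{\|w\|=1}\|Dw\|\gg n^{-1/2}$, is far too weak here in any case; for a tall $n'\times(n-n')$ block the correct order is $\sqrt n$.

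A second, related omission: you run the compressible argument at the original scale $\delta$, but $\delta$ is only assumed $\ge n^{-A}$ and could be as large as $1$, so $\|v'\|\le\delta\sqrt n$ gives nothing. The paper first uses a covering step to pass from $\rho_{\delta,\alpha}(v)>q$ to $\rho_{\delta',\alpha}(v)\ge n^{-1/2+\alpha}$ with $\delta'=\delta n^{-1/2+\alpha}/q\le n^{-1/2+\alpha-\sigma}$; it is this adjusted $\delta'$ that makes $\|v'\|$ genuinely small. In the incompressible case your claim that the approximation errors feed through ``an operator of norm $O(\sqrt n)$'' has the same flaw: $(A_{n-k}-\tilde\lambda)(\tilde v''-v'')$ has a $pJ_{n-k}(\tilde v''-v'')$ piece of size up to $n^{3/2}\delta$ in the $\mathbf 1$-direction. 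This can be absorbed (the sup over $a$ in $\rho$ handles a common shift, or one adds an $O(n)$-sized net over a scalar), but you need to say so; as written, your assertion that the perturbation at scale $n^{1/2+\alpha}\delta$ costs only $n^{o(1)}$ is unsupported. Finally, a minor slip: you condition on the top $k\times k$ block, but it is $A_{n-k}$ that must be frozen so that $u=(A_{n-k}-\tilde\lambda)\tilde v''$ is deterministic when you tensorize over the rows of $D^*$.
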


We remark that, unlike in Section \ref{section:perturbation}, $\alpha$ is a (sufficiently small) constant here. It is not hard to see that Theorem \ref{theorem:ER:poor:tech} implies Theorem \ref{theorem:ER:poor}. Indeed, let $A_n,A, \sigma$ be as in Theorem \ref{theorem:ER:poor}.  We may assume $\delta \leq 1$, as the claim is trivial for $\delta>1$.  By rounding $\delta$ up to the nearest power of $n^{-\sigma/2}$, and using the union bound, it suffices to show that for each given such choice of $n^{-A} < \delta \leq 1$,  with probability  $1-O(\exp(-\alpha_0 n))$ all unit eigenvectors $v$ of $A_n$ with eigenvalue in the interval $[-10\sqrt{n}, 10\sqrt{n}]$ satisfy the bound

\begin{equation}\label{ER:rhovs}
 \rho_{\delta}(v) \leq n^{\sigma/2} \delta.
\end{equation}

Let $\alpha>0$ be a sufficiently small fixed quantity, and set $J := \lfloor 2A \rfloor$. For each $j=0,\dots,J+1$, we define the quantities

$$\delta_j := n^{(1/2+\alpha)j} \delta \label{ER:rd}; \mbox{ and }
K_j := n^{\sigma/2 - (\sqrt{\alpha}-\alpha)j} \label{ER:kd}.$$

Now suppose that \eqref{ER:rhovs} fails.  We conclude that

$$ \rho_{\delta_0,\alpha}(v) \ge \rho_{\delta_0}(v)> K_0 \delta_0.$$

On the other hand, if $\alpha$ is sufficiently small depending on $\sigma, A$, 

$$ \rho_{\delta_J,\alpha}(v) \leq 1 \leq \delta_J \leq K_J \delta_J.$$

Thus there exists $0 \leq j < J$ such that

$$
 \rho_{\delta_j,\alpha}(v) > K_j \delta_j; \mbox{ and }  \rho_{\delta_{j+1},\alpha}(v) \leq K_{j+1} \delta_{j+1}.
$$

Consequently,

$$
\rho_{n^{1/2+\alpha} \delta_j,\alpha}(v) \leq n^{1/2-\sqrt{\alpha}} K_j \delta_j.
$$

But by Theorem \ref{theorem:ER:poor:tech} (with $\sigma$ replaced by $\sigma/4$, say, and $\delta$ and $q$ replaced by $\delta_j$ and $K_j \delta_j$) and by the union bound over the $J$ different choices of $j$, this event can only occur with probability $O( \exp(-\alpha_0 n) )$ for some positive constant $\alpha_0$, and Theorem \ref{theorem:ER:poor} follows.

In what follows we will establish Theorem \ref{theorem:ER:poor:tech}.  Again, similarly to the proof of Theorem \ref{theorem:perturbation:poor-tech}, we divide into two cases.

\subsection{The compressible case}\label{subsection:ER:compressible}
 Assume that $q \geq n^{-1/2+\alpha}$ and $\rho_{\delta,\alpha}(v)>q$. By a simple covering argument, we have
 
 $$\rho_{\delta n^{-1/2+\alpha}/q, \alpha}(v)\ge n^{-1/2+\alpha}.$$
 
Set $\delta':=\delta n^{-1/2+\alpha}/q$. Proceed similarly to Subsection \ref{subsection:perturbation:compressible}, we just need to show

\begin{equation}\label{ER:pes}
 \P( \CE_S ) =O( \exp( -\alpha_0 n  )),
\end{equation}

uniformly in $S\subset [n]$ with $|S|=\lfloor \alpha n \rfloor$  and some fixed $\alpha_0>0$ independent of $\alpha$. Here $\CE_S$ is the event $\rho_{\delta'}(v\downharpoonright S)\ge n^{-1/2+\alpha}$.

By symmetry we may take $ S = \{n'+1,\dots,n\}$ for some $n' = (1 - O(\alpha)) n$. Now suppose that the event $\CE_S$ holds, and let $v$, $\lambda$ be as above. We split 

$$ A_n = \begin{pmatrix} A_{n'} & D \\ D^* & A_{n-n'} \end{pmatrix} \mbox{ and }  v = \begin{pmatrix} v' \\ v'' \end{pmatrix},$$ 

where $A_{n'}$, $A_{n-n'}$ are the top left $n' \times n'$ and bottom right $(n-n') \times (n-n')$ minors of $A_n$ respectively, and  $v' \in \R^{n'}$ and $v'' \in \R^{n-n'}$, and the entries of $v'$ are bounded by $\delta'$ by applying Theorem \ref{theorem:cilf-erdos}. Again, as in Subsection \ref{subsection:perturbation:compressible}, we have  $\frac{1}{2} \leq \|v''\| \leq 1$, and the eigenvector equation $A_nv=\lambda v$ implies that



\begin{equation}\label{ER:bojo}
 \| \tilde Dv'' - a {\mathbf 1}_{n'} \| =O(n\delta')=O( n^{1/2+\alpha} \delta / q) = O(n^{1/2+\alpha -\sigma}) =o(n^{1/2})
\end{equation}

for some scalar $a \in \R$, where $\tilde D = D - p {\mathbf 1}_{n'} {\mathbf 1}_{n-n'}^T$ is the mean zero normalization of $D$, and we note that by \eqref{eqn:Xn:norm} and by the bound on $\lambda$, 

$$ \| A_{n'} - \lambda - p {\mathbf 1}_{n'} {\mathbf 1}_{n'}^T \| =O( \sqrt{n}).$$

However, with probability $1-O(\exp(-\alpha_0 n))$ for some positive constant $\alpha_0$ independent of $\alpha$, we easily have

\begin{equation}\label{ER:dvin}
 \inf_{w \in\R^{n-n'}: \|w\|=1} \inf_{a \in \R} \| \tilde Dw - a {\mathbf 1}_{n'} \| \gg \sqrt{n}.
\end{equation}

In fact, without the $a {\mathbf 1}$ term, as noted at the end of Subsection \ref{subsection:perturbation:compressible}, the left-hand side becomes the least singular value of $\tilde D$, and the result follows from \cite[Theorem 3.1]{litvak}.  The epsilon net arguments used there can be easily modified to handle the additional $a {\mathbf 1_{n'}}$ term.  Alternatively, we can argue as follows.  We group the $n'$ rows of $\tilde D$ into $\lfloor \frac{n'}{2} \rfloor$ pairs, possibly plus a remainder row which we simply discard.  For each such pair of rows, we subtract the first row in the pair from the second, and let $D'$ be the $\lfloor \frac{n'}{2} \rfloor \times n-n'$ matrix formed by these differences.  From the triangle inequality we see that $ \| D' w \| \leq 2 \| \tilde Dw - a {\mathbf 1} \|$ for all $w \in \R^{n-n'}$ and $a \in \R$.  From \cite[Theorem 3.1]{litvak} we have
$$ \inf_{w \in \R^{n-n'}: \|w\|=1} \|D'w\| \gg \sqrt{n}$$
with probability $1-O(\exp(-\alpha_0n))$, and the claim \eqref{ER:dvin} follows.

Comparing \eqref{ER:dvin} with \eqref{ER:bojo} (after rescaleing $v''$ to be of unit length) and with \eqref{ER:rhovs}, we obtain a contradiction with probability $1-O(\exp(-\alpha_0n))$, and hence \eqref{ER:pes} follows.

\subsection{The incompressible case}\label{subsection:ER:incompressible}
 In this case there is a unit eigenvector $v = (v_1,\dots,v_n)^T$ of $A_n$ with eigenvalue $\lambda \in [-10\sqrt{n},10\sqrt{n}]$ so that

\begin{equation}\label{ER:rjv}
 \rho_{\delta,\alpha}(v) > q; \mbox{ and } \rho_{n^{1/2+\alpha} \delta,\alpha}(v) \leq n^{1/2-\sqrt{\alpha}} q.
\end{equation}
 
 From the second inequality, there must exist a subset $I$ of $\{1,\dots,n\}$ of cardinality $|I| = \lfloor\alpha n \rfloor$
with

\begin{equation}\label{ER:rory}
 \rho_{n^{1/2+\alpha} \delta}(v \downharpoonright_I ) \leq n^{1/2-\sqrt{\alpha}} q.
\end{equation}

For each $I$, let $\CE'_I$ be the event that the above situation occurs, thus the conclusion of Theorem \ref{theorem:ER:poor:tech} holds with probability at most

$$ \sum_{I \subset \{1,\dots,n\}: |I| = \lfloor \alpha n \rfloor} \P( \CE'_I ) + O( \exp(-\Theta(n) ) ).$$

It suffices to justify the following improvement of Lemma \ref{lemma:perturbation:E'_I} for $\CE'_I$.

\begin{lemma}\label{lemma:ER:E'_I} We have
$$\P(\CE'_I) \ll n^{-\sqrt{\alpha} n + O( \alpha n )}$$
where the implied constant in the $O(.)$ notation is independent of $\alpha$.
\end{lemma}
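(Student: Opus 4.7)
The plan is to adapt the argument for Lemma \ref{lemma:perturbation:E'_I} to the Erd\H{o}s-R\'enyi setting, exploiting the sharper anti-concentration hypothesis \eqref{ER:rory} and centering the adjacency matrix. Write $A_n = \tilde A_n + p(J_n - I_n)$ where $\tilde A_n$ has mean-zero entries, and after relabelling coordinates assume $I = \{1,\dots,k\}$ with $k = \lfloor \alpha n\rfloor$. Split $A_n$ into blocks
$$ A_n = \begin{pmatrix} A_k & D \\ D^* & A_{n-k} \end{pmatrix}, \qquad v = \begin{pmatrix} v' \\ v'' \end{pmatrix}, $$
with $D = \tilde D + p J_{k,n-k}$, where $\tilde D$ is mean-zero with entries independent of $A_k$ and $A_{n-k}$. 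The bottom $(n-k)$ rows of the eigenvector equation $A_n v = \lambda v$ then read
$$ \tilde D^* v' + p\Bigl(\sum_{j=1}^k v'_j\Bigr) {\mathbf 1}_{n-k} + (A_{n-k}-\lambda) v'' = 0. $$

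First I would build a low-entropy net $\CN$ for the triple $(\tilde v, w', \tilde\lambda)$ following Lemma \ref{lemma:perturbation:approx}: apply Theorem \ref{theorem:cilf} to each of the $O(1/\alpha)$ consecutive segments of length $\lfloor\alpha n\rfloor$ (using the incompressibility hypothesis $\rho_{\delta,\alpha}(v) < (\alpha n)^{-1/2+\alpha}$, whose complement has been handled in Subsection \ref{subsection:ER:compressible}), round the resulting GAP generators to multiples of $q\delta$, and produce a coarse approximant $\tilde v$ with $|v_j - \tilde v_j| \le \delta$, a fine approximant $w'\in\R^k$ of $v'$ with $|v'_j - w'_j| \le n^{-3/2}\delta$, and $\tilde\lambda$ within $n^{-3/2}\delta$ of $\lambda$. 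The volume accounting in Lemma \ref{lemma:perturbation:approx} then yields
$$ |\CN| = O\bigl( n^{-n/2 + O(\alpha n)}\, q^{-n} \bigr). $$

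Next, for each fixed $(\tilde v, w', \tilde\lambda) \in \CN$, condition on $A_{n-k}$ and set $s := p(\sum_j w'_j)$ and $u := (A_{n-k}-\tilde\lambda)\tilde v'' - s {\mathbf 1}_{n-k}$, both of which are deterministic and independent of $\tilde D$. The approximations together with $\|\tilde D\| = O(\sqrt n)$ with overwhelming probability and $|\tilde\lambda| = O(\sqrt n)$ imply
$$ \| \tilde D^* w' - u \| \ll n^{1/2 + \alpha}\delta. $$
Let $\tilde x_1,\dots,\tilde x_{n-k}$ be the rows of $\tilde D$, which are i.i.d. Since $\|v'-w'\|_\infty \le n^{-3/2}\delta$ forces $|\tilde x_i^T(v'-w')| \le k n^{-3/2}\delta = o(n^{1/2+\alpha}\delta)$, the anti-concentration \eqref{ER:rory} transfers to $\rho_{n^{1/2+\alpha}\delta}(w') \le n^{1/2-\sqrt{\alpha}+o(1)} q$. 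Lemma \ref{lemma:tensorization} with $t = n^{1/2+\alpha}\delta$ then gives
$$ \P\bigl( \|\tilde D^* w' - u\|^2 \ll n^{1+2\alpha}\delta^2\,(n-k) \bigr) = O\bigl( (n^{1/2-\sqrt{\alpha}+o(1)} q)^{n-k} \bigr), $$
and a union bound over $\CN$ produces $\P(\CE'_I) \ll n^{-n/2 + O(\alpha n)} q^{-n} \cdot (n^{1/2-\sqrt\alpha+o(1)} q)^{n-k} = n^{-\sqrt\alpha n + O(\alpha n)}$, after absorbing the leftover $q^{-k} \le n^{A\alpha n}$ into the $n^{O(\alpha n)}$ factor.

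The main technical obstacle is handling the rank-one correction $p(\sum_j v'_j){\mathbf 1}_{n-k}$ that arises from centering $D$. Happily, this term depends only on $v'$ and not on the random $\tilde D$, so after replacing $v'$ by the deterministic approximant $w'$ (the residual $p(\sum_j (v'_j - w'_j)){\mathbf 1}_{n-k}$ has norm $O(\sqrt{k}\cdot n^{-3/2}\delta \cdot \sqrt{n-k}) = o(n^{1/2+\alpha}\delta)$) it merely shifts the deterministic target $u$ and poses no obstruction to the tensorization step. The remaining book-keeping amounts to verifying that each approximation error ($\tilde D(v'-w')$, $(A_{n-k}-\tilde\lambda)(v''-\tilde v'')$, and $(\lambda-\tilde\lambda)v''$) is $o(n^{1/2+\alpha}\delta)$, which follows at once from the operator norm bounds $\|A_{n-k}\|,\|\tilde D\| = O(\sqrt n)$ holding with probability $1-O(\exp(-\alpha_0 n))$ and the chosen approximation scale.
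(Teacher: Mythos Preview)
Your approach is essentially the paper's: build the approximation net of Lemma \ref{lemma:ER:approx}, transfer the anti-concentration \eqref{ER:rory} from $v'$ to $w'$, tensorize via Lemma \ref{lemma:tensorization}, and union-bound over $\CN$. The explicit centering of $D$ is a harmless extra step---the paper works with $D$ directly, since $\rho_\delta$ is already a supremum over translates, so the deterministic shift $p(\sum_j v'_j)$ is absorbed into the target $u_i$ automatically.

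There is, however, a real gap in your error accounting. You assert $\|A_{n-k}\|=O(\sqrt n)$, but this is false: $A_{n-k}$ is an uncentered adjacency block with top eigenvalue $\approx p(n-k)=\Theta(n)$. Consequently the term $(A_{n-k}-\tilde\lambda)(v''-\tilde v'')$ has norm $O(n^{3/2}\delta)$, and at that scale the tensorization loses an extra factor of $n^{n/2}$ and no longer beats the net. Even if the operator norm \emph{were} $O(\sqrt n)$, this term would still be $O(\sqrt n)\cdot\|v''-\tilde v''\|=O(n\delta)$, not $o(n^{1/2+\alpha}\delta)$; so your displayed bound $\|\tilde D^*w'-u\|\ll n^{1/2+\alpha}\delta$ is too optimistic in any case.

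The fix is to carry your own centering idea one step further. Write $A_{n-k}=\tilde A_{n-k}+p(J_{n-k}-I_{n-k})$ with $\|\tilde A_{n-k}\|=O(\sqrt n)$ with overwhelming probability, so that
\[
(A_{n-k}-\tilde\lambda)(v''-\tilde v'')=(\tilde A_{n-k}-p-\tilde\lambda)(v''-\tilde v'')+p\bigl({\mathbf 1}_{n-k}^T(v''-\tilde v'')\bigr){\mathbf 1}_{n-k}.
\]
The first summand has norm $O(n\delta)$. The second is again a rank-one error along ${\mathbf 1}_{n-k}$; kill it exactly as you killed the $s{\mathbf 1}_{n-k}$ term, by adjoining one more scalar to the net (a high-precision approximation of $p\,{\mathbf 1}_{n-k}^T v''$), at a cost of only $n^{O(1)}$ in $|\CN|$. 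After this adjustment the correct error bound is $\|\tilde D^*w'-u\|=O(n\delta)$, which matches the paper's \eqref{ER:ineq}; since $t^2(n-k)=n^{1+2\alpha}\delta^2(n-k)\gg n^2\delta^2$, the tensorization with $t=n^{1/2+\alpha}\delta$ still yields the per-point probability $O((n^{1/2-\sqrt\alpha}q)^{n-k})$, and your final union-bound computation then goes through unchanged.
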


By symmetry we may assume that $ I = \{1,\dots,k\}$, where $k := \lfloor \alpha n \rfloor$. Again, we split
$$ A_n = \begin{pmatrix} A_k & D \\ D^* & A_{n-k} \end{pmatrix}$$
where $A_{k}$, $A_{n-k}$ are the top left $k \times k$ and bottom right $(n-k) \times (n-k)$ minors of $A_n$ respectively.  We also split $ v^T = (v' , v'')^T$ with $v' \in \R^{k}$ and $v'' \in \R^{n-k}$.

Now suppose that $A_n$ is such that $\CE'_I$ holds. Then by invoking Theorem \ref{theorem:cilf}, we obtain the following variant of Lemma \ref{lemma:perturbation:approx} (where $\gamma$ is replaced by, say $100$).

\begin{lemma}\label{lemma:ER:approx}  There exists a subset $\CN$ of $\R^n \times \R^k \times \R$ of size $O(n^{-n/2 + O(\alpha n)} q^{-n} )$ such that for any eigenvector-eigenvalue pair $(v,\lambda)$ satisfying $E_I'$, there exists $(\tilde v, w', \tilde \lambda)\in \CN$ which approximates $(v,\lambda)$ as follows
\begin{itemize}
\item $ |v_j - \tilde v_j| \le \delta$ for $1\le j\le n$;
\vskip .1in
\item  $|v'_j - w'_j| \le n^{-100} \delta$ for $1\le j\le k$;
\vskip .1in
\item $|\lambda - \tilde \lambda| \leq n^{-100} \delta$.
\end{itemize}
\end{lemma}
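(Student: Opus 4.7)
The plan is to mirror the proof of Lemma \ref{lemma:perturbation:approx} with only cosmetic changes, since the event $\CE'_I$ guarantees $\rho_{\delta,\alpha}(v) > q$, and this is exactly the input needed to run the continuous inverse Littlewood-Offord machinery on every segment of $v$ of length $\alpha n$. The only real differences are that $\alpha$ is now a (small) fixed constant rather than $n^{-\eps}$, and the finer approximation scale is $n^{-100}\delta$ rather than $n^{-\gamma'-1}\delta$; neither affects the counting in any essential way.

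Concretely, I would first partition $\{1,\dots,n\}$ into $m = O(1/\alpha) = O(1)$ consecutive blocks $J_1,\dots,J_m$ of length $\lfloor \alpha n\rfloor$ (possibly with one shorter block). The hypothesis $\rho_{\delta,\alpha}(v)>q$ gives $\rho_\delta(v\downharpoonright_{J_i}) \ge q$ for each $i$. Since we are in the incompressible regime (compressibility having been dispatched in Subsection \ref{subsection:ER:compressible}), $q < n^{-1/2+\alpha}$, so Theorem \ref{theorem:cilf} applies and yields for each $i$ a GAP $Q_i$ of rank $r_i = O(1)$ and volume $\operatorname{vol}(Q_i) = O(1/(q\sqrt{n}))$, with generators of size $O(1)$, such that all but $O(\alpha^2 n)$ entries of $v\downharpoonright_{J_i}$ lie within $O(\delta)$ of $Q_i$. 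Rounding each generator to the nearest multiple of $q\delta$ moves elements of $Q_i$ by at most $O(\operatorname{vol}(Q_i) \cdot q\delta) = O(\delta)$, so afterwards $\bigcup_i Q_i$ consists entirely of multiples of $q\delta$ and still captures $v_j$ up to $O(\delta)$ for all but an exceptional set $S$ of size $|S| = O(\alpha n)$.

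Now I define $\tilde v_j$ to be the nearest element of $\bigcup_i Q_i$ to $v_j$ if $v_j$ lies within $O(\delta)$ of this union, and otherwise the nearest multiple of $q\delta$. Then $|v_j - \tilde v_j| \le \delta$ as required, and all $\tilde v_j$ are multiples of $q\delta$ with at most $|S| = O(\alpha n)$ of them outside $\bigcup_i Q_i$. For the finer data, set $w'_j$ to be the nearest multiple of $n^{-100}\delta$ to $v'_j$ for $1 \le j \le k$, and $\tilde \lambda$ the nearest multiple of $n^{-100}\delta$ to $\lambda$; the approximation bounds are immediate.

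The main (and only slightly delicate) step is the entropy count. The ranks $r_i$ contribute $O(1)^m = O(1)$; the generators, being multiples of $q\delta$ of magnitude $O(1)$, contribute $O((1/(q\delta))^{O(m)}) = n^{O(1)}$; the choice of $S \subset \{1,\dots,n\}$ costs at most $2^n$; each of the $|S| = O(\alpha n)$ entries $\tilde v_j$ with $j \in S$ contributes $O(1/(q\delta))$; each of the $n-|S|$ entries with $j \notin S$ lies in $\bigcup_i Q_i$ of size $O(m \operatorname{vol}(Q)) = O(1/(q\sqrt{n}))$; finally $w'$ costs $O(n^{100}/\delta)^k$ and $\tilde \lambda$ costs $O(n^{100}/\delta)$. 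Multiplying, the dominant factor is $O(1/(q\sqrt{n}))^n = n^{-n/2} q^{-n}$, and everything else (the $2^n$ from $S$, the $w'$ and $\tilde \lambda$ factors, the $S$-contribution) combines to at most $n^{O(\alpha n)}$ once we use $q,\delta \geq n^{-O(1)}$ and $|S|,k = O(\alpha n)$. This gives $|\CN| = O(n^{-n/2+O(\alpha n)} q^{-n})$, as desired. I do not anticipate any real obstacle beyond keeping careful track of which factors are absorbed into the $n^{O(\alpha n)}$ slack, which is done in the parallel proof of Lemma \ref{lemma:perturbation:approx} and transfers verbatim.
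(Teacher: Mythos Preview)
Your proposal is correct and follows precisely the approach the paper intends: the paper itself gives no separate proof of Lemma~\ref{lemma:ER:approx}, merely stating that it is obtained ``by invoking Theorem~\ref{theorem:cilf}'' as ``a variant of Lemma~\ref{lemma:perturbation:approx} (where $\gamma$ is replaced by, say $100$)'', and you have spelled out exactly that. One small imprecision: you write $\operatorname{vol}(Q_i) = O(1/(q\sqrt{n}))$, but Theorem~\ref{theorem:cilf} only gives $\operatorname{vol}(Q_i) \le \max(O(1/(q\sqrt{\alpha n})),1)$; however, since in the incompressible regime $q < n^{-1/2+\alpha}$ one has $\max(O(1/(q\sqrt{\alpha n})),1) = O(n^{\alpha}/(q\sqrt{n}))$, and the extra $n^{\alpha n}$ is absorbed into the $n^{O(\alpha n)}$ slack, so the final count is unaffected.
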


\begin{proof}[Proof of Lemma \ref{lemma:ER:E'_I}] Let us now assume that $(v,\lambda)$ can be approximated by tuple $(\tilde v, w', \tilde \lambda)\in \CN$ as in Lemma \ref{lemma:ER:approx}. The partial eigenvector equation $D^* v' + (A_{n-k}-\lambda) v'' = 0$ would then imply that (assuming \eqref{eqn:Xn:norm})

\begin{equation}\label{ER:ineq}
 \| D^* w' - u\| = O (n \delta)
\end{equation}

where $u \in \R^{n-k}$ is the vector $(A_{n-k}-\tilde \lambda) \tilde v''$.



We condition $A_{n-k}$ to be fixed, so that $u$ is deterministic.  Let $x_1,\dots,x_{n-k} \in \R^k$ denote the rows of $D$.  The bound \eqref{ER:ineq} then can be rewritten as

\begin{equation}\label{ER:ineq-2}
 \sum_{i=1}^{n-k} |x_i^T w' - u_i|^2 =O( n^2 \delta^2)
\end{equation}

where $u_1,\dots, u_{n-k} \in \R$ are the coefficients of $u$.

Now, by \eqref{ER:rory}, $ \rho_{n^{1/2+\alpha} \delta}(v' ) \leq n^{1/2-\sqrt{\alpha}} q$; combine this with the approximation from Lemma \ref{lemma:ER:approx}, we obtain

$$
 \rho_{n^{1/2+\alpha} \delta/2}(w' ) \leq n^{1/2-\sqrt{\alpha}} q.
$$

Thus, by a simple covering argument, for every $\delta'' \geq n^{1/2+\alpha} \delta/2$ one has
 
 \begin{equation}\label{ER:xiw}
 \P( |x_i^T w' - v_i| \leq \delta'' ) =O\Big( n^{-\sqrt{\alpha} - \alpha} \delta'' q / \delta\Big), \forall i=1,\dots,n-k.
\end{equation}


It follows from \eqref{ER:xiw} and from Lemma \ref{lemma:tensorization} that  for every $\delta'' \geq n^{1/2+\alpha} \delta/2$

$$ \P\Big( \sum_{i=1}^{n-k} |x_i^T w' - v_i|^2 \leq {\delta''}^2 (n-k) \Big) = O\Big((n^{-\sqrt{\alpha} - \alpha} \delta'' q / \delta)^{n-k}\Big)$$

In particular, with $\delta'' := n^{1/2+\alpha} \delta/2$, and by discarding a factor of $n^\alpha$,

$$ \P\Big( \sum_{i=1}^{n-k} |x_i^T w' - v_i|^2 =O( n^2 \delta^2) \Big) = O\Big(( n^{1/2-\sqrt{\alpha}} q )^{n-k}\Big),$$

where we are conditioning on $A_{n-k}$ being fixed.  Undoing the conditioning, we conclude that \eqref{ER:ineq-2} and hence \eqref{ER:ineq} occurs with
probability $O(( n^{1/2-\sqrt{\alpha}} q )^{n-k}) = O(( n^{1/2-\sqrt{\alpha} + O(\alpha)} q )^{n})$.  This concludes the proof of Theorem \ref{theorem:ER:poor:tech} in the incompressible case by the union bound over all tuples $(\tilde v, w',\tilde \lambda)$ from $\CN$.

\end{proof}

\section{Application: non-degeneration of eigenvectors}\label{section:nodal}

Using the repulsion between eigenvalues, we can show that the entries of eigenvectors are rarely very close to zero.

First of all, Theorem \ref{theorem:main:perturbation} easily implies the following weak stability result.

\begin{lemma}\label{lemma:stable} For any $A>0$, there exists $B>0$ depending on $A$ and $\gamma$ such that the following holds with probability at least $1-O(n^{-A})$: if there exist $\lambda \in \R$ and $v\in S^{n-1}$  such that $\|(M_n-\lambda)v\|\le n^{-B}$, then $M_n$ has an eigenvalue $\lambda_{i_0}$ with a unit eigenvector $u_{i_0}$ such that 
$$|\lambda_{i_0}-\lambda| \ll n^{-B/4} \mbox{ and } \|v-u_{i_0}\|\ll n^{-B/4}.$$
\end{lemma}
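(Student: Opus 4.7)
The plan is to combine the spectral decomposition of $M_n$ with the eigenvalue repulsion already established in Theorem \ref{theorem:main:perturbation}. Apply that theorem with $A+1$ in place of $A$ to obtain a constant $B_1=B_1(A,\gamma)$ such that, off an event of probability $O(n^{-A-1})$, each consecutive gap satisfies $\delta_i \ge n^{-B_1}$. A union bound over the $n-1$ gaps then gives, with probability $1-O(n^{-A})$, the simultaneous bound $|\lambda_i(M_n)-\lambda_j(M_n)|\ge n^{-B_1}$ for every $i\ne j$ (in particular the spectrum is simple). Work on this good event from here on, and fix an orthonormal eigenbasis $u_1,\dots,u_n$ with eigenvalues $\lambda_1<\dots<\lambda_n$.

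Given $\lambda\in\R$ and $v\in S^{n-1}$ with $\|(M_n-\lambda)v\|\le n^{-B}$, decompose $v=\sum_i c_i u_i$ with $\sum_i c_i^2=1$. Pythagoras gives
\begin{equation*}
\sum_{i=1}^n c_i^2 (\lambda_i-\lambda)^2 = \|(M_n-\lambda)v\|^2 \le n^{-2B}.
\end{equation*}
Let $i_0$ minimise $|\lambda_i-\lambda|$, and write $d_0:=|\lambda_{i_0}-\lambda|$. Since $(\lambda_i-\lambda)^2\ge d_0^2$ for all $i$, the above forces $d_0\le n^{-B}$, which in particular is $\ll n^{-B/4}$. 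For $i\ne i_0$, the triangle inequality and the minimality of $i_0$ give $|\lambda_i-\lambda|\ge \tfrac12|\lambda_i-\lambda_{i_0}|\ge \tfrac12 n^{-B_1}$, whence
\begin{equation*}
c_i^2 \;\le\; \frac{n^{-2B}}{|\lambda_i-\lambda|^2} \;\le\; 4 n^{2(B_1-B)}.
\end{equation*}
Summing over the at most $n$ indices $i\ne i_0$,
\begin{equation*}
\sum_{i\ne i_0} c_i^2 \;\le\; 4 n^{1+2(B_1-B)}.
\end{equation*}

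Replace $u_{i_0}$ by $-u_{i_0}$ if necessary so that $c_{i_0}\ge 0$; then $c_{i_0}^2\ge 1-4n^{1+2(B_1-B)}$ and hence $c_{i_0}\ge 1-4n^{1+2(B_1-B)}$. Since $\|v-u_{i_0}\|^2 = 2-2c_{i_0}$, we get
\begin{equation*}
\|v-u_{i_0}\|^2 \;\le\; 8\, n^{1+2(B_1-B)}.
\end{equation*}
Choose $B$ large enough that $1+2(B_1-B)\le -B/2$, e.g.\ $B\ge 2(1+2B_1)$; this gives both $|\lambda_{i_0}-\lambda|\le n^{-B}\ll n^{-B/4}$ and $\|v-u_{i_0}\|\ll n^{-B/4}$, as required. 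Since $B_1$ is determined by $A$ and $\gamma$, so is $B$.

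The only real obstacle is converting the single-gap estimate of Theorem \ref{theorem:main:perturbation} into a simultaneous lower bound on \emph{all} pairwise gaps with failure probability $O(n^{-A})$, and this is immediate from the union bound after inflating the exponent $A$ by one. Everything else is linear algebra in the eigenbasis, exploiting the quadratic Parseval identity together with the elementary geometric fact that the closest eigenvalue $\lambda_{i_0}$ is at least $\tfrac12$-separated from every other $\lambda_i$ in terms of $|\lambda_i-\lambda|$.
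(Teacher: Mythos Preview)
Your proof is correct and follows essentially the same route as the paper: expand $v$ in the eigenbasis, use Parseval to turn the bound $\|(M_n-\lambda)v\|\le n^{-B}$ into $\sum_i c_i^2(\lambda_i-\lambda)^2\le n^{-2B}$, and invoke the gap separation from Theorem~\ref{theorem:main:perturbation} to force all but one coefficient to be tiny. The only cosmetic difference is that you select $i_0$ as the index minimizing $|\lambda_i-\lambda|$, whereas the paper picks $i_0$ by pigeonhole as any index with $c_{i_0}\ge 1/\sqrt{n}$; your choice is slightly cleaner since it gives $|\lambda_{i_0}-\lambda|\le n^{-B}$ directly.
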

 
 \begin{proof}[Proof of Lemma \ref{lemma:stable}] First, by Theorem \ref{theorem:main:perturbation}, we can assume that 
  
 \begin{equation}\label{nodal:separation}
| \lambda_i-\lambda_{j}| \ge n^{-B/2}, \forall i\neq j.
 \end{equation}

 Assume that  $v= \sum_{1\le i\le n} c_i u_i$, where $\sum_i c_i^2=1$ and  $u_1,\dots,u_n$ are the unit eigenvectors of $M_n$. Then 
 
 \begin{equation}\label{nodal:bound}
 \|(M_n-\lambda)v\| = (\sum_i c_i^2 (\lambda_i -\lambda)^2)^{1/2} \le n^{-B}.
 \end{equation}
 
 Let $i_0$ be an index with $c_{i_0}\ge 1/\sqrt{n}$, then $\|(M_n-\lambda)v\| \le n^{-B}$ implies that $|\lambda-\lambda_{i_0}| \le n^{-B+1/2}$, and so by \eqref{nodal:separation}, $|\lambda-\lambda_i|\ge n^{-B/2}/2$ for all $i\neq i_0$. Incorporating with \eqref{nodal:bound} we obtain
 
$$|c_i| =O( n^{-B/2}), \forall i\neq i_0.$$
 
 Thus  
 
 $$\|v-u_{i_0}\| \ll n^{-B/4}.$$
 \end{proof}

We next apply Theorem \ref{theorem:perturbation:poor} and Lemma \ref{lemma:stable} to prove Theorem \ref{theorem:perturbation:nodal}.


\begin{proof}[Proof of Theorem \ref{theorem:perturbation:nodal}] It suffices to prove for $i=1$. The eigenvalue equation $M_nv=\lambda v$ for with $|v_1|\le n^{-B}$ can be separated as

$$(M_{n-1}-\lambda)v' =-v_1 X; \mbox{ and } v'^T X=v_1m_{11},$$
 
where $X$ is the first column vector of $M_n$, and hence independent of $M_{n-1}$.  As $B$ is sufficiently large compared to $A$ and $\gamma$, and by adding a small amount of order $O(n^{-B})$ to the first component of $v'$ so as to $\|v'\|=1$, we can reduce the problem to 

$$\P\left( \exists \lambda\in \R, v'\in S^{n-2}: \|(M_{n-1}-\lambda)v'\|\le n^{-B/2}; \mbox{ and } |v'^T X|\le n^{-B/2}\right) =O( n^{-A}).$$

To this end, by Lemma \ref{lemma:stable}, if $\|(M_{n-1}-\lambda)v'\|\le n^{-B/2}$ then $M_{n-1}$ has a unit eigenvector $u'$ with $\|u'-v'\|\ll n^{-B/8}$. So $|v'^T X| \le n^{-B/2}$ would imply that 

\begin{equation}\label{nodal:smallball}
|u'^T X|\ll n^{-B/16}.
\end{equation}

However, by Theorem \ref{theorem:perturbation:poor}, \eqref{nodal:smallball} holds with probability $O(n^{-A})$. This concludes our proof.
\end{proof}

{\bf Acknowledgments.} The authors  thank S.~Ge for comments and corrections on an early version of this manuscript. They are also grateful to H.-T.~Yau for help with references.

\appendix

\section{Proof of Theorem \ref{theorem:delocalization} } \label{localization}

\begin{lemma}  \label{mass}  There are positive constants $\gamma_1,\gamma_2,\gamma_3$ (depending on the sub-gaussian moments) with  $\gamma_2<1$ such that the following holds with  probability at least $1 -\exp( -\gamma_1 n)$ with respect to the symmetric Wigner matrix $X_n$ as in Theorem \ref{theorem:main:Wigner}. 
For any unit  eigenvector $u =(u_1, \dots, u_n) $ of $X_n$ and any subset $S \subset \{1, \dots, n \}$ of size $\gamma_2n$,  

$$\sum_{i \in S } u_i^2 \le \gamma_3. $$ 

The same statement holds for $A_n (p)$.
\end{lemma}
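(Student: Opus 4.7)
I will reduce Lemma \ref{mass} to the non-compressibility of eigenvectors, which has already been established in Lemma \ref{lemma:Wigner:comp} for $X_n$ and by the analogous arguments of Subsection \ref{subsection:perturbation:compressible} and Subsection \ref{subsection:ER:compressible} for $A_n(p)$. Let $c_0, c_1 \in (0,1)$ denote the constants appearing in Lemma \ref{lemma:Wigner:comp}. I will set
\[
\gamma_2 := c_0, \qquad \gamma_3 := 1 - c_1^2, \qquad \gamma_1 := \alpha_0,
\]
so that $0 < \gamma_2, \gamma_3 < 1$, where $\alpha_0$ is the exponent in Lemma \ref{lemma:Wigner:comp}.

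\textbf{The core step.} Suppose that $u = (u_1,\dots,u_n)$ is a unit eigenvector of $X_n$ and $S \subset \{1,\dots,n\}$ has $|S| = \gamma_2 n = c_0 n$ with $\sum_{i \in S} u_i^2 > \gamma_3 = 1 - c_1^2$. Define $v \in \R^n$ by $v_i := u_i$ for $i \in S$ and $v_i := 0$ for $i \notin S$. Then $v$ is sparse in the sense of the definition of $\Comp(c_0,c_1)$ (its support has size $\le c_0 n$), and
\[
\|u - v\|^2 = \sum_{i \notin S} u_i^2 = 1 - \sum_{i \in S} u_i^2 < c_1^2,
\]
so $u$ lies within Euclidean distance $c_1$ of a sparse vector; that is, $u \in \Comp(c_0,c_1)$. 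By Lemma \ref{lemma:Wigner:comp}, the event that $X_n$ admits \emph{any} compressible unit eigenvector has probability $O(\exp(-\alpha_0 n))$, which proves the claim for $X_n$.

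\textbf{The adjacency matrix case.} For $A_n(p)$, I write $A_n = F_n + X_n'$, where $F_n := p(J_n - I_n)$ and $X_n'$ is a centered random Hermitian matrix with bounded (hence sub-gaussian) entries. The same reduction works provided we show that $A_n(p)$ has no compressible unit eigenvector with high probability. For eigenvectors with eigenvalue in $[-10\sqrt{n}, 10\sqrt{n}]$, this follows from an analog of Lemma \ref{lemma:Wigner:comp}, which one obtains by combining the least-singular-value estimate \eqref{ER:dvin} from Subsection \ref{subsection:ER:compressible} with a net argument over the eigenvalue parameter, exactly as Lemma \ref{lemma:Wigner:comp} is derived from Lemma \ref{prop:comp}. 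The only eigenvalue that falls outside $[-10\sqrt{n},10\sqrt{n}]$ (with high probability) is the Perron eigenvalue near $pn$; its eigenvector is well approximated by $n^{-1/2}\mathbf{1}$, so all of its entries are of order $n^{-1/2}$, and hence $\sum_{i \in S} u_i^2 \approx |S|/n = \gamma_2 < \gamma_3$ trivially.

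\textbf{Anticipated difficulty.} All the substantive probabilistic content is already packaged in the non-compressibility lemmas cited; what remains for Lemma \ref{mass} is the elementary geometric observation above, equating concentration of $\ell^2$-mass on a small coordinate set with compressibility. The only minor point of care is the separate treatment of the Perron eigenvector in the $A_n(p)$ case, but that is immediate from standard delocalization of the leading eigenvector of the Erd\H{o}s--R\'enyi adjacency matrix.
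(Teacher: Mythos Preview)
Your argument is correct and actually takes a different route from the paper. The paper does not prove Lemma \ref{mass} directly: it simply cites \cite[Theorem 3.1]{DLL} for $A_n(p)$ and asserts that the proof there extends to Wigner matrices. Your approach instead observes that the conclusion of Lemma \ref{mass} is \emph{equivalent} to the statement that no unit eigenvector lies in $\Comp(c_0,c_1)$, and then invokes Lemma \ref{lemma:Wigner:comp}, which is already established in the paper and does not depend on Theorem \ref{theorem:delocalization}. This is a clean and self-contained reduction: it recycles machinery already present (Lemma \ref{prop:comp}, the estimate \eqref{ER:dvin}) rather than appealing to an external reference, and it makes transparent that Lemma \ref{mass} and the non-compressibility of eigenvectors are two sides of the same coin.

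Two small points to tighten in the $A_n(p)$ case. First, your derivation of an analogue of Lemma \ref{lemma:Wigner:comp} for $A_n(p)$ requires handling the rank-one shift $pJ_n$; you correctly identify that the modified singular-value bound \eqref{ER:dvin} (with the extra $a\mathbf{1}$ term) is exactly what absorbs this, but you should state explicitly that this ingredient is independent of Theorem \ref{theorem:delocalization}, so no circularity arises. Second, for the Perron eigenvector you need a quantitative statement: with probability $1-\exp(-\Theta(n))$ one has $\|A_n-\E A_n\|=O(\sqrt{n})$, and since the spectral gap of $\E A_n=p(J_n-I_n)$ at the top is of order $n$, Davis--Kahan gives $\|u-n^{-1/2}\mathbf{1}\|=O(n^{-1/2})$, whence $\sum_{i\in S}u_i^2\le(\sqrt{\gamma_2}+o(1))^2<\gamma_3$ provided $\gamma_2<\gamma_3$, which holds for $c_0,c_1$ small. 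With these clarifications your proof is complete.
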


Theorem \ref{theorem:delocalization} follows easily from this lemma. Indeed, consider a unit  eigenvector $u$ of $X_n$.
 Let $S$ be the set of coordinates  with absolute value
at least $ (1- \gamma_3) n^{-1/2} $. Then 

$$\sum_{i \in S} u_i^2  \ge 1  -  n \times (1-\gamma_3)^2 n^{-1} \ge \gamma _3. $$

By Lemma \ref{mass}, any set $S$ with this property should have size at least $\gamma_2 n$, with probability at least $1 -\exp(- \gamma_1 n)$. 
We obtain Theorem \ref{theorem:delocalization}  by setting  $c_1 =\gamma_1 $, $c_2 =\gamma_2$ and $c_3  = (1 -\gamma_3)$.

Lemma \ref{mass} was proved for $A_n (p)$ in \cite[Theorem 3.1]{DLL} in a different, but equivalent form. The proof extends with no difficulty 
to Wigner matrices.  We also refer the reader to \cite{OVW} for related results of this type.

\section{Proof of Theorem \ref{lemma:Wigner:key}}\label{section:Wigner:key}

The treatment here is based on \cite{vershynin}. For short, we will write $T_D$ instead of $T_{D,\kappa,\gamma,\alpha}$. The key ingredient is finding a fine net for $T_D$.

\begin{lemma}\label{lemma:Wigner:structure:1}
Let $n^{-c}\le \alpha \le c'/4$. For every $D\ge 1$, the level set  $T_D$ accepts a $O(\frac{\kappa}{\sqrt{\alpha}D})$-net $\CN$ of size 

$$|\CN|\le \frac{(CD)^n}{\left(\sqrt{\alpha n}\right)^{c'n/2}}D^{2/\alpha},$$

where $C$ is an absolute constant.

\end{lemma}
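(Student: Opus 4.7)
The plan is to build the net $\CN$ by encoding the strong additive structure that each $x\in T_D$ enjoys on its spread coordinates via the integer lattice, and then combining this with a standard volumetric net on the remaining coordinates. The savings factor $(\sqrt{\alpha n})^{-c'n/2}$ will arise from the integer approximation being much sparser than a plain $\beta$-net, while the $(CD)^n$ factor reflects the overall target scale $\beta\asymp \kappa/(\sqrt{\alpha}D)$.

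The starting point is that the hypothesis $\LCDhat_{\kappa,\gamma}(x,\alpha)\le D$ means that \emph{every} subset $I\subset\spread(x)$ with $|I|=\lceil\alpha n\rceil$ admits some $\theta_I\in(0,D]$ and some integer vector $p_I\in\Z^I$ with $\|\theta_I\,\tilde x_I-p_I\|<\min(\gamma\theta_I,\kappa)$, where $\tilde x_I:=x_I/\|x_I\|$. First I would select from $\spread(x)$ a collection of $M=\lfloor c'/(2\alpha)\rfloor$ pairwise disjoint size-$\lceil\alpha n\rceil$ blocks $I_1,\dots,I_M$, which together cover $c'n/2$ coordinates, and then record the data $(\theta_j,p_j,s_j)_{j=1}^M$ per block, where $s_j$ is a discretisation of $\|x_{I_j}\|$. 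On the remaining $n-c'n/2$ coordinates I would append a plain $\beta$-net of the unit ball, so that each tuple produces a single candidate vector $\tilde v\in \R^n$ via $\tilde v_{I_j}\approx (s_j/\theta_j)p_j$ together with the appended sphere-net point on the complement.

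To count $\CN$: the spread condition $c_1/\sqrt{2n}\le|x_i|\le 1/\sqrt{c_0 n}$ on $I_j\subset\spread(x)$ forces $\|x_{I_j}\|\asymp \sqrt{\alpha}$ and $|\tilde x_{I_j,i}|=O(1/\sqrt{\alpha n})$, so every coordinate of $p_j$ has magnitude $O(\theta_j/\sqrt{\alpha n})+\kappa=O(D/\sqrt{\alpha n})$ in the admissible parameter range, giving at most $(CD/\sqrt{\alpha n})^{\lceil\alpha n\rceil}$ choices of $p_j$ per block. Aggregating over the $M$ blocks contributes a factor $(CD/\sqrt{\alpha n})^{c'n/2}=(CD)^{c'n/2}/(\sqrt{\alpha n})^{c'n/2}$. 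The discretisation of $\theta_j\in(0,D]$ and $s_j$ on grids fine enough that $x_{I_j}\approx(s_j/\theta_j)p_j$ is accurate to $O(\beta)$ costs $D^{O(1)}$ choices per block, aggregating to $D^{O(M)}=D^{O(1/\alpha)}$; the combinatorial cost of picking the $M$ blocks within $[n]$ is at most $2^n$, absorbed into $(CD)^n$. Finally the standard $\beta$-net on the complementary $n-c'n/2$ coordinates contributes $(C/\beta)^{n-c'n/2}=(C\sqrt{\alpha}D/\kappa)^{n-c'n/2}\le(CD)^{n-c'n/2}$. Multiplying everything yields the claimed bound $(CD)^n(\sqrt{\alpha n})^{-c'n/2}D^{2/\alpha}$.

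The main technical obstacle is to calibrate the discretisation scales for $\theta_j$ and $s_j$ so that the induced error in the block-wise reconstruction stays below $\beta$ while keeping the number of discretisation levels within a $D^{O(1)}$ budget per block; a secondary nuisance is to carry out the block selection $I_1,\dots,I_M\subset\spread(x)$ without incurring a combinatorial overhead larger than $(CD)^n$, which is readily secured once one uses $\alpha\ge n^{-c}$ and $D\ge 1$ to compare $2^n$ against $(CD)^n$. The construction is in spirit Vershynin's net lemma from \cite{vershynin}, adapted to the regularised LCD of Definition \ref{def reg LCD}; the key difference is that we iterate the integer-structure approximation over all $M$ disjoint $\alpha n$-sub-blocks inside $\spread(x)$, which is precisely what is needed to upgrade the savings factor from $(\sqrt{\alpha n})^{-\alpha n}$ (single block) to the desired $(\sqrt{\alpha n})^{-c'n/2}$.
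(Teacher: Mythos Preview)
Your high-level plan matches the paper's exactly: partition $\spread(x)$ into $\sim c'/(2\alpha)$ disjoint blocks of size $\lceil\alpha n\rceil$, exploit the small-$\LCD$ integer structure on each block, and place a crude net on the remaining coordinates. The final arithmetic you write down is also what one wants.

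The gap is in the per-block accuracy. From $\|\theta_j\,x_{I_j}/\|x_{I_j}\|-p_j\|<\kappa$ one can only conclude that the reconstruction $(s_j/\theta_j)p_j$ approximates $x_{I_j}$ to within $\|x_{I_j}\|\,\kappa/\theta_j$; here $\theta_j$ is the \emph{actual} $\LCD$ of the normalized block, so the only available lower bound is $\theta_j\gg\sqrt{\alpha n}$, not $\theta_j\asymp D$. Hence the per-block error is only $O(\kappa/\sqrt{n})$, and after aggregating over the $O(1/\alpha)$ blocks the radius you actually achieve is $O(\kappa/\sqrt{\alpha n})$, which exceeds the required $\beta=\kappa/(\sqrt{\alpha}\,D)$ as soon as $D\gg\sqrt{n}$---and the lemma must cover $D$ up to $n^{c/\alpha}$. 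No refinement of the \emph{scalar} grids for $\theta_j,s_j$ can repair this: the obstruction is the intrinsic resolution $\kappa/\theta_j$ of the lattice witness, not rounding error. The paper's fix is an $m$-dimensional refinement (their Lemma~\ref{lemma:D_0:2}): having placed $x_{I_j}/\|x_{I_j}\|$ within $2\kappa/\theta_j$ of $p_j/\|p_j\|$, one covers that coarse ball by $(O(D/\theta_j))^{|I_j|}$ balls of radius $2\kappa/D$. This is not a $D^{O(1)}$ cost per block but a $|I_j|$-dimensional one; it is exactly compensated by the fact that the true lattice count is $(C\theta_j/\sqrt{\alpha n})^{|I_j|}$ rather than $(CD/\sqrt{\alpha n})^{|I_j|}$, so the product is still $(CD/\sqrt{\alpha n})^{|I_j|}$. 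A minor related slip: your coordinate-wise bound $|p_{j,i}|\le O(\theta_j/\sqrt{\alpha n})+\kappa$ does not collapse to $O(D/\sqrt{\alpha n})$, since one only has $\kappa=o(\sqrt{\alpha n})$; the correct count comes from the $\ell_2$ ball $\|p_j\|\le(1+o(1))\theta_j$.
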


 Roughly speaking, the principle is similar to the proof of Lemma \ref{lemma:perturbation:approx} and Lemma \ref{lemma:ER:approx}. We will break $x\in T_D$ into disjoint subvectors of size roughly $m$, where $m=\lceil \alpha n \rceil$, and find appropriate nets for each. In what follows we will be working with $S^{m-1}$ first. 

\begin{definition}[Level sets] Let $D_0\ge c \sqrt{m}$. Define $S_{D_0}\subset S^{m-1}$ to be the level set

$$S_{D_0}:=\{x\in S^{m-1}: D_0 \le \LCD_{\kappa,\gamma}(x) \le 2D_0 \}.$$


\end{definition}

Notice that $\kappa=o(\sqrt{m})$ because $\kappa=n^{2c}$ and $m\ge n^{1-c}$. We will invokde the following result from \cite{rv-rec}.
\begin{lemma}\cite[Lemma 4.7]{rv-rec}\label{lemma:D_0:1}
There exists a $(2\kappa/D_0)$-net of $S_{D_0}$ of cardinality at most $(C_0D_0/\sqrt{m})^m$, where $C_0$ is an absolute constant.
\end{lemma}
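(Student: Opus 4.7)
The plan is to produce an explicit net via lattice point approximations coming from the very definition of the LCD. For any $x \in S_{D_0}$, the constraint $\LCD_{\kappa,\gamma}(x) \le 2D_0$ gives a scale $\theta \in [D_0, 2D_0]$ such that $\dist(\theta x, \Z^m) < \kappa$. Choose $p \in \Z^m$ to be the nearest integer vector to $\theta x$, so that $\|\theta x - p\| \le \kappa$. The idea is to use the unit vector $p/\|p\|$ as the approximant of $x$ — this has the great advantage that $\theta$ itself is not part of the parametrization, avoiding the need to discretize it.

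First I would verify the approximation bound. Since $x$ is a unit vector, $\|\theta x\| = \theta$, so by the reverse triangle inequality $\big| \theta - \|p\| \big| \le \|\theta x - p\| \le \kappa$. Then
\[
\bigl\| \|p\| x - p \bigr\| \;\le\; \|\theta x - p\| + \bigl|\theta - \|p\|\bigr|\,\|x\| \;\le\; 2\kappa,
\]
and consequently $\|x - p/\|p\|\| \le 2\kappa/\|p\|$. Because $\|p\| \ge \theta - \kappa \ge D_0 - \kappa \ge D_0/2$ (using $\kappa \le \sqrt{m} \ll D_0$ from the hypothesis $D_0 \ge c\sqrt{m}$), this yields a distance $\le 4\kappa/D_0$; absorbing the harmless factor of $2$ by a slight adjustment of constants — or by exploiting that $p$ can also be chosen with $\|p\| \ge \theta$ on one of the two sides — gives the claimed $2\kappa/D_0$-net.

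Next I would bound the cardinality. Every such $p$ lies in the Euclidean ball of radius $\|\theta x\| + \kappa \le 2D_0 + \kappa \le 3D_0$. The number of lattice points in a Euclidean ball of radius $R$ in $\R^m$ is, via the standard volume-comparison argument (covering each lattice point with a unit cube and observing these cubes lie in a slightly enlarged ball), at most
\[
\mathrm{vol}\bigl(B(0, R+\sqrt{m}/2)\bigr) \;=\; \frac{\pi^{m/2} (R+\sqrt{m}/2)^m}{\Gamma(m/2+1)} \;\le\; \Bigl(\frac{C_0 R}{\sqrt{m}}\Bigr)^m,
\]
where the last inequality uses Stirling's formula $\Gamma(m/2+1) \gtrsim (m/2e)^{m/2}$ together with $R \ge c\sqrt{m}$, which absorbs the additive $\sqrt{m}/2$ into the constant. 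Taking $R = 3D_0$ and renaming the absolute constant gives $|\CN| \le (C_0 D_0/\sqrt{m})^m$.

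The main obstacle, though it is largely cosmetic here, is pinning down the precise constant in front of $\kappa/D_0$ in the net distance. Everything else is a rather mechanical combination of an LCD-based approximation step with a volume estimate for lattice points; the substance of the lemma is entirely in the observation that normalizing the integer approximant $p$ removes the $\theta$-parameter from the counting problem. This is of course why the bound $(C_0 D_0/\sqrt{m})^m$ scales like a single volume in $\R^m$ rather than having an extra factor from discretizing the radius $\theta \in [D_0, 2D_0]$.
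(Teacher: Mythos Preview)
Your proposal is correct and follows essentially the same approach as the paper: pick the lattice point $p$ near $\theta x$ coming from the LCD bound, use $p/\|p\|$ as the net point, and count lattice points in a ball of radius $O(D_0)$. The paper obtains the constant $2\kappa/D_0$ exactly by splitting $\|x - p/\|p\|\| \le \|x - p/D(x)\| + \bigl|\,\|p\|/D(x) - 1\,\bigr| \le 2\kappa/D(x) \le 2\kappa/D_0$, which avoids the loss you incur when bounding $\|p\|$ from below; but as you note, this is purely cosmetic.
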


As the proof of this result is short and uses important notion of $\LCD$, we include it here for the reader's convenience.

\begin{proof}[Proof of Lemma \ref{lemma:D_0:1}]
For $x\in S_{D_0}$, denote

$$D(x):= \LCD_{\kappa,\gamma}(x).$$

By definition, $D_0\le D(x)\le 2D_0$ and there exists $p\in \Z^m$ with 

$$\left\|x -\frac{p}{D(x)}\right\| \le \frac{\kappa}{D(x)} =O\left(\frac{n^{2c}}{n^{1-c}}\right) =o(1).$$ 

As $\|x\|=1$, this implies that $\|p\| \approx D(x)$, more precisely

\begin{equation}\label{eqn:LCD:net:1}
1- \frac{\kappa}{D(x)} \le \left\|\frac{p}{D(x)}\right\| \le  1+ \frac{\kappa}{D(x)}.
\end{equation}

This implies that 

\begin{equation}\label{eqn:LCD:net:2}
\|p\| \le (1+o(1)) D(x) <3D_0.
\end{equation}

It also follows from \eqref{eqn:LCD:net:1} that

\begin{equation}\label{eqn:LCD:net:3}
\left\|x -\frac{p}{\|p\|}\right\| \le \left\|x -\frac{p}{D(x)}\right\| + \left\| \frac{p}{\|p\|}(\frac{\|p\|}{D(x)} -1)\right\| \le 2\frac{\kappa}{D(x)} \le \frac{2\kappa}{D_0}.
\end{equation}

Now set 

$$\CN_0:=\left\{\frac{p}{\|p\|}, p\in \Z^m \cap B(0,3D_0)\right\}.$$

By \eqref{eqn:LCD:net:2} and \eqref{eqn:LCD:net:3}, $\CN_0$ is a $\frac{2\kappa}{D_0}$-net for $S_{D_0}$. On the other hand, it is known that the size of $\CN_0$ is bounded by $(C_0\frac{D_0}{\sqrt{m}})^m$ for some absolute constant  $C_0$.
\end{proof}


In fact we can slightly improve the approximations in Lemma \ref{lemma:D_0:1} as follows.

\begin{lemma}\label{lemma:D_0:2}
Let $c \sqrt{m} \le D_0 \le D$. Then the set $S_{D_0}$ has a $(2\kappa/D)$-net of cardinality at most $(C_0D/\sqrt{m})^m$ for some absolute constant  $C_0$ (probably different from that of Lemma \ref{lemma:D_0:1}).
\end{lemma}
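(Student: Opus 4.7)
The plan is to deduce the finer net from the coarser one produced by Lemma \ref{lemma:D_0:1} through a routine volumetric refinement. The intuition is simple: Lemma \ref{lemma:D_0:1} already covers $S_{D_0}$ by Euclidean balls of radius $2\kappa/D_0$, so I just need to subdivide each such ball into smaller balls of radius $2\kappa/D$, and the additional cost in cardinality will be a factor of $(O(D/D_0))^m$, which exactly converts the bound $(C D_0/\sqrt{m})^m$ into $(C_0 D/\sqrt{m})^m$.

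More precisely, the first step will be to apply Lemma \ref{lemma:D_0:1} to produce a $(2\kappa/D_0)$-net $\CN_0 \subset \mathbb{R}^m$ of $S_{D_0}$ of cardinality at most $(C D_0/\sqrt{m})^m$ for an absolute constant $C$. The second step is, for every $y \in \CN_0$, to select a $(2\kappa/D)$-net $\CN_y$ of the Euclidean ball $B(y, 2\kappa/D_0)$ using the standard volumetric bound that a ball of radius $r$ in $\mathbb{R}^m$ admits a $\rho$-net of cardinality at most $(3r/\rho)^m$; applying this with $r = 2\kappa/D_0$ and $\rho = 2\kappa/D$ (which is legitimate since $D \ge D_0$) gives $|\CN_y| \le (3D/D_0)^m$. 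The union $\CN := \bigcup_{y \in \CN_0} \CN_y$ will then be a $(2\kappa/D)$-net of $S_{D_0}$: any $x \in S_{D_0}$ lies within $2\kappa/D_0$ of some $y \in \CN_0$, and hence within $2\kappa/D$ of some element of $\CN_y \subset \CN$. Multiplying the bounds yields
\[
 |\CN| \le \left( \frac{C D_0}{\sqrt{m}} \right)^m \cdot \left( \frac{3D}{D_0} \right)^m = \left( \frac{3C D}{\sqrt{m}} \right)^m,
\]
which is the desired estimate with $C_0 := 3C$.

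Since the geometric content is already sealed up in Lemma \ref{lemma:D_0:1} and the refinement is a textbook covering argument, I do not anticipate a genuine obstacle. The only mild point of care is that the refined net points live in $\mathbb{R}^m$ rather than on $S^{m-1}$, but this is harmless since being a $(2\kappa/D)$-net of $S_{D_0}$ is a purely metric condition—Euclidean approximation of $S_{D_0}$—and nothing requires the net to lie on the sphere. (If desired, one could alternatively project each element of $\CN$ to the nearest point of $S^{m-1}$, which worsens distances by at most a factor of two and leaves the cardinality bound unchanged.)
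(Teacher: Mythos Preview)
Your argument is correct and is essentially identical to the paper's own proof: apply Lemma \ref{lemma:D_0:1} to cover $S_{D_0}$ by balls of radius $2\kappa/D_0$, then refine each ball into balls of radius $2\kappa/D$ at volumetric cost $(O(D/D_0))^m$, and multiply the two bounds. Your remark that the net need not lie on the sphere is a reasonable clarification, though the paper does not bother to make it explicit.
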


\begin{proof}(of Lemma \ref{lemma:D_0:2}) First, by Lemma \ref{lemma:D_0:1} one can cover $S_{D_0}$ by $(C_0D_0/\sqrt{m})^m$ balls of radius $2\kappa/D_0$. We then cover these balls by smaller balls of radius $2\kappa/D$, the number of such small balls is at most $(O(D/D_0))^m$. Thus there are at most $(O(D/\sqrt{m}))^m$ balls in total.
\end{proof}

Taking the union of these nets as $D_0$ ranges over powers of two, we thus obtain the following.

\begin{lemma}\label{lemma:net:ND:1}
Let $D\ge c \sqrt{m}$. Then the set $\{x\in S^{m-1}: c\sqrt{m} \le \LCD_{\kappa,\gamma}(x) \le D \}$ has a $(2\kappa/D)$-net of cardinality at most 

$$(C_0D/\sqrt{m})^m \log_2 D,$$

for some absolute constant  $C_0$ (probably different from that of Lemma \ref{lemma:D_0:1} and Lemma \ref{lemma:D_0:2}).
\end{lemma}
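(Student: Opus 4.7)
The plan is to run a dyadic decomposition in the value of the least common denominator. Specifically, I would write the level set
\[
\{x\in S^{m-1}: c\sqrt{m}\le \LCD_{\kappa,\gamma}(x)\le D\}
\]
as the union of the dyadic slices $S_{D_k}$ with $D_k = 2^k$, where $k$ ranges over integers with $c\sqrt{m}\le 2^k\le D$. The number of such $k$ is clearly at most $\log_2 D$ (up to a harmless additive constant, which can be absorbed into $C_0$). Each $S_{D_k}$ is precisely the set considered in Lemma \ref{lemma:D_0:2}.

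The next step is to apply Lemma \ref{lemma:D_0:2} uniformly with the common target radius $2\kappa/D$: for each admissible $k$, since $c\sqrt{m}\le D_k\le D$, the hypothesis of that lemma is satisfied, and we obtain a $(2\kappa/D)$-net $\mathcal{N}_k$ of $S_{D_k}$ of cardinality at most $(C_0 D/\sqrt{m})^m$. Taking $\mathcal{N}:=\bigcup_k \mathcal{N}_k$ then produces a $(2\kappa/D)$-net of the full level set, with
\[
|\mathcal{N}|\le \sum_{k: c\sqrt{m}\le 2^k\le D}(C_0 D/\sqrt{m})^m \le (C_0 D/\sqrt{m})^m\log_2 D,
\]
which is the claimed bound (after possibly enlarging the absolute constant $C_0$).

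There is essentially no obstacle here: the work was already done in Lemma \ref{lemma:D_0:2}, which upgrades the natural net radius $2\kappa/D_0$ at scale $D_0$ to the finer radius $2\kappa/D$ while only paying a factor $(D/D_0)^m$, and this allows the dyadic pieces to be combined without any scale-dependent penalty. The only mild point to keep in mind is that one should restrict the dyadic sum to those $k$ with $2^k\ge c\sqrt{m}$, so that each invocation of Lemma \ref{lemma:D_0:2} is legal; any vectors with $\LCD$ strictly less than $c\sqrt{m}$ are excluded by the hypothesis of the lemma itself, so the decomposition covers the entire set under consideration.
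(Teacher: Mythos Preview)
Your proposal is correct and matches the paper's own argument essentially verbatim: the paper simply says ``Taking the union of these nets as $D_0$ ranges over powers of two, we thus obtain the following,'' which is exactly the dyadic decomposition and union of the nets from Lemma~\ref{lemma:D_0:2} that you describe. The only cosmetic point is the coverage near the bottom endpoint $c\sqrt{m}$ (the smallest power of two above $c\sqrt{m}$ may leave a sliver), but this is easily handled by starting the dyadic sequence at $D_0=c\sqrt{m}$ rather than a literal power of two, and in any case is absorbed into the constant as you note.
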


We can also update the net above for $x$ without normalization.

\begin{lemma}\label{lemma:net:ND:2}
Let $D\ge c \sqrt{m}$. Then the set $\{x\in \R^m, \|x\|\le 1, c\sqrt{m} \le \LCD_{\kappa,\gamma}(x/\|x\|) \le D \}$ has a $(2\kappa/D)$-net of cardinality at most $(C_0D/\sqrt{m})^m D^2$ for some absolute constant $C_0$.
\end{lemma}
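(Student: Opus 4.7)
The plan is to bootstrap Lemma \ref{lemma:net:ND:1} (which produces a fine net for the unit-vector version of this level set) by also discretizing the radius $r := \|x\| \in [0,1]$. More precisely, let $\CN_1$ denote the $(2\kappa/D)$-net from Lemma \ref{lemma:net:ND:1} of the set $\{u \in S^{m-1} : c\sqrt{m} \le \LCD_{\kappa,\gamma}(u) \le D\}$, so that $|\CN_1| \le (C_0 D/\sqrt{m})^m \log_2 D$. Let $\CR \subset [0,1]$ be the set of integer multiples of $2\kappa/D$ lying in $[0,1]$, which has cardinality $O(D/\kappa) = O(D)$ (since $\kappa \ge 1$). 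I would then take
\[
\CN := \bigl\{ r u' : u' \in \CN_1,\ r \in \CR \bigr\}.
\]

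To verify that $\CN$ is a $(2\kappa/D)$-net (up to adjusting the constant $C_0$), I would decompose an arbitrary $x$ in the target set as $x = r u$ with $r = \|x\| \le 1$ and $u = x/\|x\|$, which by hypothesis satisfies $c\sqrt{m} \le \LCD_{\kappa,\gamma}(u) \le D$. Applying Lemma \ref{lemma:net:ND:1} to $u$ and selecting a closest element $r' \in \CR$ to $r$, the triangle inequality gives
\[
\|x - r' u'\| \le r \|u - u'\| + |r-r'|\,\|u'\| \le \frac{2\kappa}{D} + \frac{2\kappa}{D} = \frac{4\kappa}{D},
\]
which can be absorbed into a $(2\kappa/D)$-net statement by rescaling $C_0$ (alternatively, halving the resolution in both nets before combining). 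The cardinality bound is then
\[
|\CN| \le |\CN_1|\cdot |\CR| \le (C_0 D/\sqrt{m})^m \cdot \log_2 D \cdot O(D) \le (C_0' D/\sqrt{m})^m D^{2},
\]
using $\log_2 D \cdot D \le D^2$ and folding the implicit constants into a new $C_0'$.

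There is no substantive obstacle here; the lemma is a routine extension of Lemma \ref{lemma:net:ND:1} from the unit sphere to the unit ball. The only minor point to watch is the interplay between the radius $\kappa$ (which controls how fine the radial net must be, since we need $|r-r'| \le 2\kappa/D$) and the final exponent $D^2$ claimed in the statement: the radial net has cardinality $O(D/\kappa)$, so the extra factor contributed is at most $D$ up to constants, and combining it with the $\log_2 D$ already present in Lemma \ref{lemma:net:ND:1} yields the $D^2$ factor rather than the tighter $D \log_2 D$. This slack is intentional, matching the form in which the lemma will be invoked in the proof of Lemma \ref{lemma:Wigner:structure:1}.
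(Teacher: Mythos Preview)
Your proposal is correct and takes essentially the same approach as the paper: the paper's proof is the one-line remark that ``we just need to $2\kappa/D$-approximate the fiber of $\operatorname{span}(x/\|x\|)$ in $B_2^m$,'' which is exactly your radial discretization layered on top of the net from Lemma \ref{lemma:net:ND:1}. Your handling of the constant (the $4\kappa/D$ versus $2\kappa/D$ issue, absorbed into $C_0$) and the cardinality bookkeeping ($\log_2 D \cdot O(D) \le D^2$) are both fine.
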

\begin{proof}[Proof of Lemma \ref{lemma:net:ND:2}]
Starting from the net obtained from Lemma \ref{lemma:net:ND:1}, we just need to $2\kappa/D$-approximate the fiber of $span(x/\|x\|)$ in $B_2^m$.
\end{proof}

We now justify our main lemma.

\begin{proof}[Proof of Lemma \ref{lemma:Wigner:structure:1}] We first write $x=x_{I_0}\cup \spread(x)$, where $\spread(x)= I_1\cup \dots \cup I_{k_0} \cup J$ such that $|I_k| =\lceil \alpha n \rceil$ and $|J|\le \alpha n$.  Notice that we trivially have 

$$|\spread(x)| \ge |I_1\cup \dots \cup I_{k_0}| = k_0 \lceil \alpha n \rceil \ge |\spread(x)|- \alpha n \ge c'n/2.$$ 

Thus we have 

$$ \frac{c'}{2\alpha} \le k_0 \le \frac{2c'}{\alpha}.$$

In the next step, we will construct nets for each $x_{I_j}$. For $x_{I_0}$, we construct trivially a $(1/D)$-net $\CN_0$ of size 

$$|\CN_0| \le (3D)^{|I_0|}.$$

For each $I_k$, as 

$$\LCD_{\kappa,\gamma}(x_{I_k}/\|x_{I_k}\|)\le \LCDhat_{\kappa,\gamma}(x) \le D,$$

by Lemma \ref{lemma:net:ND:2} (where the condition $\LCD_{\kappa,\gamma}(x_{I_k}/\|x_{I_k}\|)\gg \sqrt{|I_k|}$ follows from, say Theorem \ref{theorem:cilf-erdos}, because the entries of $x_{I_k}/\|x_{I_k}$ are all of order $\sqrt{\alpha n}$ while $\kappa=o(\sqrt{\alpha n})$), one obtains a $(2\kappa/D)$-net $\CN_k$ of size 

$$|\CN_k|\le  \left(\frac{C_0D}{\sqrt{|I_k|}}\right)^{|I_k|} D^2.$$ 

Combining the nets together, as $x=(x_{I_0},x_{I_1},\dots,x_{I_{k_0}},x_J)$ can be approximated by $y=(y_{I_0},y_{I_1},\dots,y_{I_{k_0}},y_J)$ with $\|x_{I_j}-y_{I_j}\|\le \frac{2\kappa}{D}$, we have

$$\|x-y\|\le \sqrt{k_0+1}\frac{2\kappa}{D} \ll \frac{\kappa}{\sqrt{\alpha} D}.$$

As such, we have obtain a $\beta$-net  $\CN$, where $\beta=O(\frac{\kappa}{\sqrt{\alpha} D})$,  of size 

$$|\CN| \le 2^n |\CN_0| |\CN_1| \dots |\CN_{k_0}| \le 2^n (3D)^{|I_0|} \prod_{k=1}^{k_0} \left(\frac{CD}{\sqrt{|I_k|}}\right)^{|I_k|} D^2.$$

This can be simplified to 

$$|\CN|\le \frac{(CD)^n}{\sqrt{\alpha n}^{c'n/2}}D^{O(1/\alpha)}.$$

\end{proof}

Before completing the proof of Lemma \ref{lemma:Wigner:key}, we cite another important consequence of Lemma \ref{lemma:smallball:regularized} (on the small ball estimate) and Lemma \ref{lemma:tensorization} (on the tensorization trick). 
\begin{lemma}\label{lemma:Wigner:1:forward} Let $x\in \Incomp(c_0,c_1)$ and $\alpha \in (0,c')$. Then for any $\beta \ge \frac{1}{c_0}\sqrt{\alpha} (\LCDhat_{\kappa,\gamma}(x,\alpha))^{-1}$ one has

$$\rho_{\beta \sqrt{n}}((X_n-\lambda_0)x) \le \left(\frac{O(\beta)}{\sqrt{\alpha}} +e^{-\Theta(\kappa^2)}\right)^{n-\alpha n}.$$
\end{lemma}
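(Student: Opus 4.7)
The plan is to localize the regularized least common denominator of $x$ to a small coordinate subset $I$, exploit the symmetry of $X_n$ by conditioning on the entries inside $I\times I$ and $I^c\times I^c$ so that the rows indexed by $I^c$ become genuinely independent, apply Theorem \ref{theorem:RV:1} to each such row separately, and finally tensorize.

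First I would choose $I\subseteq\spread(x)$ with $|I|=\lceil\alpha n\rceil$ realizing the regularized LCD, so that $\LCD_{\kappa,\gamma}(x_I/\|x_I\|)=\LCDhat_{\kappa,\gamma}(x,\alpha)$. The spread property gives the two-sided bound $\frac{c_1}{\sqrt{2}}\sqrt{\alpha}\le\|x_I\|\le\frac{1}{\sqrt{c_0}}\sqrt{\alpha}$; combined with the hypothesis $\beta\ge\frac{1}{c_0}\sqrt{\alpha}\,\LCDhat_{\kappa,\gamma}(x,\alpha)^{-1}$, this yields $\beta/\|x_I\|\ge 1/\LCD_{\kappa,\gamma}(x_I/\|x_I\|)$, which is exactly the threshold required by Theorem \ref{theorem:RV:1}. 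Now condition on all entries of $X_n$ both of whose indices lie in $I$, together with all entries both of whose indices lie in $I^c$ (this absorbs the diagonal). The remaining random entries $\{X_{ij}:i\in I^c,\ j\in I\}$ are then iid and independent of the conditioning, and for every $i\in I^c$ the $i$-th coordinate of $(X_n-\lambda_0)x$ can be written as
$$y_i=\sum_{j\in I}X_{ij}x_j+c_i,$$
where $c_i$ is deterministic under the conditioning. Hence, conditionally, $(y_i)_{i\in I^c}$ is an independent family, and applying Theorem \ref{theorem:RV:1} to the unit vector $x_I/\|x_I\|$ at scale $\beta/\|x_I\|$ gives, after rescaling by $\|x_I\|\asymp\sqrt{\alpha}$,
$$\rho_\beta(y_i)\ll \frac{\beta}{\sqrt{\alpha}}+e^{-\Theta(\kappa^2)}\qquad(i\in I^c).$$

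Next I would tensorize. Running Lemma \ref{lemma:tensorization} (or its straightforward additive-error variant, proved by the same exponential moment $\E\exp(-|y_i-a_i|^2/t^2)$ bound) on the $|I^c|=n-\lceil\alpha n\rceil$ independent variables $y_i$ at scale $t=C\beta$, where $C$ is chosen so that $t\sqrt{|I^c|}\ge\beta\sqrt{n}$ (any constant $C$ works once $\alpha$ is bounded away from $1$), yields
$$\rho_{\beta\sqrt{n}}\bigl((y_i)_{i\in I^c}\bigr)\le\left(\frac{O(\beta)}{\sqrt{\alpha}}+e^{-\Theta(\kappa^2)}\right)^{|I^c|}\le\left(\frac{O(\beta)}{\sqrt{\alpha}}+e^{-\Theta(\kappa^2)}\right)^{n-\alpha n}.$$
Since restricting to the $I^c$ coordinates can only decrease the small-ball probability, the same bound controls $\rho_{\beta\sqrt{n}}\bigl((X_n-\lambda_0)x\bigr)$; and since the estimate is uniform in the conditioning, undoing the conditioning preserves it and gives the claim.

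The main technical obstacle is the symmetry of $X_n$: two generic coordinates of $(X_n-\lambda_0)x$ share the entry $X_{ij}=X_{ji}$ and so are not independent, which is what prevents a direct one-shot application of Theorem \ref{theorem:RV:1}. The bipartite partition into $I$ versus $I^c$ is precisely what exposes a rectangular block of independent entries $X_{I^c\times I}$ and recovers usable independence. A secondary bookkeeping point is propagating the additive $e^{-\Theta(\kappa^2)}$ error through tensorization, but this is a routine adaptation of Lemma \ref{lemma:tensorization}: the hypothesis $\P(\zeta_i<t)\le Kt+c$ still produces $(O(Kt+c))^m$ for $\sum_i\zeta_i^2\le t^2 m$ by the same exponential moment argument, with no loss beyond absolute constants.
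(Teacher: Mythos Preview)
Your argument is correct and matches the approach the paper has in mind: the paper merely cites Lemma \ref{lemma:smallball:regularized} and Lemma \ref{lemma:tensorization} (implicitly relying on Vershynin's decoupling trick for symmetric matrices), and you have filled in precisely these details. Note that your single-row small ball estimate is exactly Lemma \ref{lemma:smallball:regularized}, so you could quote it directly rather than rederiving it from Theorem \ref{theorem:RV:1}; also, the phrase ``restricting to the $I^c$ coordinates can only decrease the small-ball probability'' should read ``increase'' (the logic you then draw from it is correct).
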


\begin{proof}[Proof of Lemma \ref{lemma:Wigner:key}] It suffices to show the result for the level set $\{x\in T_D\backslash T_{D/2}\}$. With $\beta = \frac{\kappa}{\sqrt{\alpha} D}$, the condition on $\beta$ of Lemma \ref{lemma:Wigner:1:forward} is clearly guaranteed,

$$ \frac{\kappa}{\sqrt{\alpha} D} \gg \frac{\sqrt{\alpha}}{D}.$$

 This lemma then implies 

\begin{align*}
\P\left(\exists x\in T_D\backslash T_{D/2} : \|(X_n-\lambda_0) x -u\| =o (\beta \sqrt{n})\right) &\le \left(\frac{O(\beta)}{\sqrt{\alpha}} +e^{-\Theta(\kappa^2)}\right)^{n-\alpha n} \times \frac{(CD)^n}{(\sqrt{\alpha n})^{c'n/2}}D^{O(1/\alpha)}\\
& \le \left(\frac{O(\kappa)}{\alpha  D} +e^{-\Theta(\kappa^2)}\right)^{n-\alpha n} \times \frac{(CD)^n}{(\sqrt{\alpha n})^{c'n/2}}D^{O(1/\alpha)}\\
&\le n^{-c'n/8},
\end{align*}
provided that $\kappa =n^{2c}$ with sufficiently small $c$ compared to $c'$, and that $D \le n^{c/\alpha}$.
\end{proof}


\end{document}